%
%
%
%
\documentclass[12pt,a4paper,fleqn]{article}
\usepackage{latexsym}
\usepackage{amsmath}
\usepackage{amsthm}
\usepackage{amssymb}
\usepackage{amsfonts}
\usepackage{enumerate}
\usepackage{graphicx}
\usepackage[usenames,dvipsnames]{xcolor}
\usepackage{rotating}
\usepackage{longtable}
\usepackage{caption}
%
\newtheorem{theorem}{Theorem}[section]
\newtheorem*{theorem*}{Theorem}
\newtheorem{lemma}[theorem]{Lemma}
\newtheorem*{lemma*}{Lemma}
\newtheorem{corollary}[theorem]{Corollary}
\newtheorem*{corollary*}{Corollary}
\newtheorem{proposition}[theorem]{Proposition}
\newtheorem*{proposition*}{Proposition}

{\theoremstyle{definition}
\newtheorem{definition}[theorem]{Definition}
\newtheorem*{definition*}{Definition}
\newtheorem{example}[theorem]{Example}
\newtheorem*{example*}{Example}
\newtheorem*{examples*}{Examples}
\newtheorem{remark}[theorem]{Remark}
\newtheorem*{remark*}{Remark}
\newtheorem{result}{Result}

\newtheorem*{question*}{Question}

\newtheorem*{problem*}{Problem}

\newtheorem*{notation*}{Notation}

\newtheorem*{note*}{Note}

\newtheorem*{algorithm*}{Algorithm}
}
\setlength{\textwidth}{6.25in}
\setlength{\textheight}{9.5in}
\setlength{\topmargin}{-0.5in}
\setlength{\oddsidemargin}{0.0in}
\setlength{\parindent}{0pt}
\setlength{\parskip}{7ptplus1ptminus1pt}
\setlength{\unitlength}{1cm}
\setlength{\arraycolsep}{4pt}
\linespread{1.0}
\linespread{1.1}
\renewcommand{\arraystretch}{0.75}

\newcommand{\Colon}{\:\mathbin{:}\:}

\newcommand{\shape}{\mathbin{\mbox{\rm sh}}\,}

\newcommand{\proofblock}{\hspace{\fill}\fbox{\rule{0pt}{3pt}\rule{3pt}{0pt}}}
\newcommand{\LEQ}{\leqslant}
\newcommand{\GEQ}{\geqslant}
\let\lhd\triangleleft
\let\unlhd\trianglelefteq

\newcounter{xcoord}
\newcounter{ycoord}

\newcommand{\nodeB}[3]{%
\setcounter{xcoord}{#1}
\addtocounter{xcoord}{#3}
\addtocounter{xcoord}{-2}
\setcounter{ycoord}{#2}
\addtocounter{ycoord}{-#3}
\addtocounter{ycoord}{2}
\put(\thexcoord,\theycoord){\circle*{1}}
}

\newcommand{\cl}[1]{%
\ifnum#1=1 \color{violet}
\else\ifnum#1=2 \color{JungleGreen}
\else\ifnum#1=3 \color{red}
\else\ifnum#1=4 \color{blue}
\else\ifnum#1=5 \color{orange}
\else\ifnum#1=6 \color{cyan}
\else\ifnum#1=7 \color{Maroon}
\fi\fi\fi\fi\fi\fi\fi%
}
\newcommand{\clx}[1]{%
\ifnum#1=1 \color{violet} 1
\else\ifnum#1=2 \color{JungleGreen} 2
\else\ifnum#1=3 \color{red} 3
\else\ifnum#1=4 \color{blue} 4
\else\ifnum#1=5 \color{orange} 5
\else\ifnum#1=6 \color{cyan} 6
\else\ifnum#1=7 \color{Maroon} 7
\fi\fi\fi\fi\fi\fi\fi%
}

\title{On ordered $k$-paths and  rims for certain families of Kazhdan-Lusztig cells of $S_n$}
\author{T. P. McDonough%
\thanks{%
\textit{Department of Mathematics,
Aberystwyth University,
Aberystwyth SY23 3BZ,
United Kingdom.}
{E-mail: tpd@aber.ac.uk}
}
\ and\ \
C. A. Pallikaros%
\thanks{%
\textit{Department of Mathematics and Statistics,
University of Cyprus,
P.O.Box 20537,
1678 Nicosia,
Cyprus.}
{E-mail: pallikar@ucy.ac.cy}
}
}
\date{3rd September, 2019}
\begin{document}
\maketitle

\begin{abstract}
For a composition $\lambda$ of $n$ we consider the Kazhdan-Lusztig cell in the symmetric group $S_n$ containing the longest element of the standard parabolic subgroup of $S_n$  associated to $\lambda$.
In this paper we extend some  of the ideas and results in [{Beitr{\"a}ge} zur Algebra und Geometrie, \textbf{59} (2018), no.~3, 523--547].
In particular, by introducing the notion of an ordered $k$-path, we are able to obtain alternative explicit descriptions for some additional families of cells associated to compositions.
This is achieved by first determining the rim of the cell, from which reduced forms for all the elements of the cell are easily obtained.
\\[1.5ex]
Key words: symmetric group; Kazhdan-Lusztig cell;  reduced form
\\
2010 MSC Classification: 05E10; 20C08; 20C30
\end{abstract}

\section{Introduction}

In~\cite{KLu79} Kazhdan and Lusztig introduced, via certain preorders, the left cells, the right cells and the two-sided cells of a Coxeter group as a means of investigating the representation theory of the Coxeter group and its associated Hecke algebra.

In the case of the symmetric group $S_n$, the cell to which an element belongs can be determined by an application of the Robinson-Schensted process.
Moreover, one can obtain all the elements in a given cell by applying the  reverse Robinson-Schensted process but, unfortunately, this does not lead to some straightforward way of obtaining reduced forms for these elements.
A useful observation is that each right (resp., left) cell of $S_n$ contains a unique involution.

The present paper, which is a continuation of the work in~\cite{MPa08,MPa15,MPa17}, is concerned with the problem of determining reduced expressions for all the elements in a given cell.
As in~\cite{MPa15,MPa17}, we again focus attention on (right) cells which have the property that the unique involution they contain is the longest element in some standard parabolic subgroup of $S_n$ and which are thus associated to compositions $\lambda$ of $n$.
The motivation for some of the main ideas in~\cite{MPa17}, which we also use here, emanates from work in~\cite{Schensted1961} and~\cite{Greene1974} on increasing and decreasing subsequences.
By extending various ideas in~\cite{MPa17} we are able to obtain alternative explicit descriptions for
some additional families of Kazhdan-Lusztig cells via the determination of their rim.
This directly leads to determining reduced forms for all the elements in these cells.

The paper is organized as follows:
In Section~\ref{sec:2c} we recall some basic facts about Kazhdan-Lusztig cells in the symmetric group.
We also recall some useful properties of paths and admissible diagrams described in~\cite{MPa17}.
Moreover, in Theorem~\ref{thm:2.16a} we characterize all compositions of $n$ for which the rim of the associated cell consists of precisely one element.

In Section~\ref{sec:OrdPaths} we introduce the notion of an ordered $k$-path which plays a key role in proving some of the main results in this paper.
In Theorem~\ref{thm:2.14a} we show that every $k$-path in a diagram $D$ is equivalent to an ordered $k$-path.
At the end of Section~\ref{sec:OrdPaths} we establish some results on extending certain ordered $k$-paths in a particular way which turns out to be useful in various arguments that follow. 

Finally, in Section~\ref{sec:3a}, using the ideas and techniques developed earlier on in the paper, we prove the main results of the section (Theorems~\ref{thm:3.5a}, \ref{thm:3.7a} and~\ref{prop:3.15a}) on determining the rim and hence reduced forms for the elements in certain families of cells.
Another result which is very useful in this direction is Proposition~\ref{prop:3.2a}
which uses ideas in~\cite{MPa17} related to the induction of cells  (see~\cite{BVo83}) and which allows us to `transform' certain cells of $S_n$ into cells of $S_m$ with $m>n$.

\begin{section}
{Preliminaries and generalities}
\label{sec:2c}

\subsection{Kazhdan-Lusztig cells in the symmetric group}
For any Coxeter system $(W,S)$, Kazhdan and Lusztig \cite{KLu79}
introduced three preorders $\LEQ_L$, $\LEQ_R$ and $\LEQ_{LR}$, with
corresponding equivalence relations $\sim_L$, $\sim_R$ and $\sim_{LR}$,
whose equivalence classes are called
\emph{left cells}, \emph{right cells} and \emph{two-sided cells},
respectively.
Each cell of $W$ provides a representation of $W$, with the $C$-basis of the Hecke algebra $\mathcal H$ of $(W,S)$ playing an important role in the construction of this representation; see~\mbox{\cite[\S~1]{KLu79}}.

We refer  to \cite{GPf00} and \cite{Hum90} for basic concepts relating to Coxeter groups and Hecke algebras.
In particular, for a Coxeter system $(W,S)$,
$W_J=\langle J\rangle$ denotes the standard parabolic subgroup determined by a
subset $J$ of $S$, $w_J$ denotes the longest element of $W_J$
and $\mathfrak{X}_J$ denotes the set of minimum length elements in the
right cosets of $W_J$ in $W$
(the distinguished right coset representatives).
Also recall the \emph{prefix relation} on the elements of~$W$:
if $x,y\in W$ we say that $x$ is a \emph{prefix} of $y$ if $y$ has a \emph{reduced form} beginning with a reduced form for $x$.

The following result collects some useful propositions concerning cells.
For proofs of (i) and (ii), see \cite[2.3ac]{KLu79} and
\cite[5.26.1]{Lus84b} 
respectively.
\begin{result}[\cite{KLu79,Lus84b}]
\label{res:2c}
\mbox{}\par
\begin{tabular}[b]{rp{5.25in}}
(i) &
If $x,y,z$ are elements of $W$ such that $x$ is a prefix of $y$,
$y$ is a prefix of $z$ and $x\sim_R z$ then $x\sim_R y$.
\\
(ii) &
If $J\subseteq S$, then the right cell containing $w_J$ is contained
in $w_J\mathfrak{X}_J$.
\end{tabular}
\end{result}

In this paper we  focus on the symmetric group.
For the basic definitions and background concerning partitions, compositions, Young diagrams, Young tableaux and the  Robinson-Schensted
correspondence we refer to~\cite{Ful97} or~\cite{Sagan}.

The symmetric group $S_n$ (acting on the right) on $\{1,\dots,n\}$ is a Coxeter group with Coxeter system
$(W,S)$ where $W=S_n$, $S=\{s_1,\ldots,s_{n-1}\}$, and $s_i$ is the
transposition $(i,i+1)$.
An element $w$ of $W$ can be described in different forms:
as a word in the generators $s_1$, \ldots , $s_{n-1}$,
as products of disjoint cycles on $1,\ldots,n$,
and in \emph{row-form}
$[w_1,\ldots,w_n]$ where $w_i=iw$ for $i=1,\dots,n$.
The Coxeter \emph{length} $l(w)$ of the element $w\in W$, that is the
shortest length of a word in the elements of $S$ representing $w$,
has an easy combinatorial description;
$l(w)$ is the number of pairs $(w_i,w_j)$ with $i<j$ and $w_i>w_j$.
The longest element $w_0$ in $W$ is
the permutation defined by $i\mapsto n+1-i$.

Let $\Phi=\{\epsilon_i-\epsilon_j\colon 1\LEQ i,j\LEQ n,\ i\ne j\}$ and $\Phi^+=\{\epsilon_i-\epsilon_j\colon 1\LEQ i<j\LEQ n\}$ where $\{\epsilon_1,\ldots,\epsilon_n\}$ is an orthonormal basis of an $n$-dimensional Euclidean space; see~\cite[p.~41]{Hum90}.
There is an action of $S_n$  on $\Phi$ given by $(\epsilon_i-\epsilon_j)w=\epsilon_{iw}-\epsilon_{jw}$ ($w\in S_n$).
The Coxeter generator $s_i$ corresponds to the reflection in the hyperplane orthogonal to $\epsilon_i-\epsilon_{i+1}$.
For $w\in S_n$, we define $N^+(w)=\{\alpha\in\Phi^+\colon \alpha w\in\Phi^+\}$ and $N^-(w)=\Phi^+-N^+(w)$.
Then $l(w)=|N^-(w)|$; see~\cite[p.~14]{Hum90}.

All our partitions and compositions will be assumed to be \emph{proper}
(that is, with no zero parts).
We use the notation $\lambda\vDash n$ (respectively, $\lambda\vdash n$)
to say that $\lambda$ is a composition (respectively, partition) of
$n$.

Let $\lambda=(\lambda_1, \ldots , \lambda_r)$ be a \textit{composition}
of $n$ with $r$ \emph{parts}.
Recall that the \emph{conjugate} composition
$\lambda'=(\lambda_1',\ldots,\lambda_{r'}')$ of $\lambda$ is defined by
$\lambda'_i=\left|\{j\Colon 1\LEQ j\LEQ r\mbox{ and }
i\LEQ\lambda_j\}\right|$ for $1\LEQ i\LEQ r'$, where $r'$ be the maximum part of the composition $\lambda$.
It is immediate that $\lambda'$ is a partition of $n$ with $r'$ parts.
We also define the subset
$J(\lambda)$
\label{def:j_lam}
of $S$ to be
$S\backslash \{s_{\lambda_1},s_{\lambda_1+\lambda_2},\ldots,
s_{\lambda_1+\ldots+\lambda_{r-1}}\}$.
Thus, corresponding to the composition $\lambda$, there is a standard
parabolic subgroup of $W$, also known as a Young subgroup, whose Coxeter generator set is $J(\lambda)$.
The longest element $w_{J(\lambda)}$ of $W_{J(\lambda)}$ can be
described in row-form by concatenating the sequences
$(\widehat\lambda_{i+1}, \ldots ,\widehat\lambda_{i}+1)$
for $i=0,\dots,r-1$,
where $\widehat\lambda_0=0$, $\widehat\lambda_{r}=n$, and
$\widehat\lambda_{i+1}=\lambda_{i+1}+\widehat\lambda_{i}$.

If $\nu=(\nu_1, \ldots , \nu_r)\vdash n$ and $\mu=(\mu_1,\ldots,\mu_s)\vdash n$, write
$\nu\unlhd\mu$ if
$\sum_{1\LEQ i\LEQ k}\nu_i \LEQ \sum_{1\LEQ i\LEQ k}\mu_i$, for all $k$ with $1\LEQ k\LEQ s$.
This is the dominance order of partitions (see~\cite[p.~58]{Sagan}).
If $\nu\unlhd\mu$ and $\nu\neq \mu$, we write $\nu\lhd\mu$.

In the case of the symmetric group $S_n$, the Robinson-Schensted
correspondence gives a combinatorial method of identifying the
Kazhdan-Lusztig cells.
The Robinson-Schensted correspondence is a bijection of $S_n$
to the set of pairs of standard Young tableaux
$(\mathcal{P},\mathcal{Q})$ of the same shape and with $n$ entries,
where the shape of a tableau is the partition counting the number of
entries on each row.
Denote this correspondence by
$w\mapsto(\mathcal{P}(w),\mathcal{Q}(w))$.
Then $\mathcal{Q}(w)=\mathcal{P}(w^{-1})$.
The \emph{shape} of $w$, denoted by $\shape{w}$, is defined to be
the common shape of the Young tableaux $\mathcal{P}(w)$ and
$\mathcal{Q}(w)$.

The following result in~\cite{KLu79} (see also {\cite[Theorem A]{Ari00} or \cite[Corollary 5.6]{Gec05}}) characterises the cells in $S_n$:
If $\mathcal{P}$ is a fixed standard Young tableau then
the set $\{w\in W\Colon \mathcal{P}(w)=\mathcal{P}\}$
is a left cell of $W$ and the set
$\{w\in W\Colon\mathcal{Q}(w)=\mathcal{P}\}$ is a right cell of $S_n$.
Conversely, every left cell and every right cell arises in this way.
Moreover, the two-sided cells are the subsets of $W$ of the form
$\{w\in W\Colon\,\shape w \mbox{ is a fixed partition}\}$.

\subsection{Diagrams, rims and reduced forms}

We recall the generalizations of the notions of diagram and tableau,
commonly used in the basic theory, which we described in \cite{MPa15}.
A \emph{diagram} $D$ is a non-empty finite subset of $\mathbb{Z}^2$.
We will assume  that $D$ has no empty rows or columns.
These are the principal diagrams of \cite{MPa15}.
We will also assume that both rows and columns of $D$ are indexed
consecutively from 1; a node in $D$ will be given coordinates $(a,b)$ where $a$ and $b$ are the indices respectively of the row and column which the node belongs to
(rows are indexed from top to bottom and columns from left to right).
The \emph{row-composition} $\lambda_D$ (respectively,
\emph{column-composition} $\mu_D$) of $D$ is defined by
setting $\lambda_{D,k}$ (respectively, $\mu_{D,k}$) to be the number of nodes on
the $k$-th row (respectively, column) of $D$.
If $\lambda$ and $\mu$ are compositions,
we will write $\mathcal{D}^{(\lambda,\mu)}$ for the set
of (principal) diagrams $D$ with $\lambda_D=\lambda$ and $\mu_D=\mu$.
We also define $\mathcal{D}^{(\lambda)}=\bigcup_{\mu\vDash n}\mathcal{D}^{(\lambda,\mu)}$.
A well-known diagram associated with a partition
$\nu=(\nu_1,\ldots,\nu_r)$ is a \emph{Young diagram}
$V(\nu)=\{(i,j)\colon 1\LEQ i\LEQ r,\ 1\LEQ j\LEQ \nu_i\}$.
A \emph{special diagram} is a diagram obtained from a Young diagram
by permuting the rows and columns.
Special diagrams are characterised in the following proposition.

\begin{result}[{\cite[Proposition~3.1]{MPa15}}.\ %
{\rm Compare~\cite[Lemma~5.2]{DMP10}}]
\label{res:2a}
Let $D$ be a diagram. The following statements are equivalent.
(i)~$D$ is special;
(ii)~$\lambda_D''=\mu_D'$;
(iii)~for every pair of nodes $(i,j),(i',j')$ of $D$ with $i\neq i'$
and $j\neq j'$, at least one of $(i',j)$ and $(i,j')$ is also a node
of $D$.
\proofblock
\end{result}

Clearly if $\nu\vdash n$, then $V(\nu)$ is the unique element of $\mathcal D^{(\nu,\nu')}$.
It follows that $\mathcal D^{(\lambda,\mu)}$ consists of a single diagram, which is special, if $\lambda$ and $\mu$ are compositions of $n$ with $\lambda''=\mu'$.

If $D$ is a diagram of size $n$ (that is, consisting of precisely $n$ nodes),
a \emph{$D$-tableau} is a bijection
$t\Colon D\rightarrow \{1,\ldots,n\}$ and
we refer to $(i,j)t$, where $(i,j)\in D$, as the
$(i,j)$-\emph{entry} of~$t$.
The group $W$ acts on the set of $D$-tableaux in the obvious
way---if $w\in W$,
an entry $i$ is replaced by $iw$ and $tw$ denotes the tableau
resulting from the action of $w$ on the tableau $t$.
We denote by $t^{D}$ and  $t_{D}$ the two $D$-tableaux obtained by
filling the nodes of $D$ with $1,\ldots,n$ by rows and by columns,
respectively, and we write $w_{D}$ for the element of $W$ defined by
$t^{D}w_{D}=t_{D}$.

Now let $D$ be a diagram and let $t$ be a \emph{$D$-tableau}.
We say $t$ is \emph{row-standard} if it is increasing on rows.
Similarly, we say $t$ is \emph{column-standard} if it is increasing
on columns.
We say that $t$ is \emph{standard} if $(i',j')t\LEQ (i'',j'')t$
for any $(i',j'),(i'',j'')\in D$ with $i'\LEQ i''$ and $j'\LEQ j''$.
Note that a standard $D$-tableau is row-standard and column-standard,
but the converse is not true, in general.

For $1\LEQ l,m\LEQ |D|$, we write $l<_{ne}m$ (resp., $l\LEQ_{se}m$) in $t$ if $i_l>i_m$ and $j_l<j_m$ (resp., $i_l\LEQ i_m$ and $j_l\LEQ j_m$) where, for $1\LEQ r\LEQ |D|$, we set $r=(i_r,j_r)t$ with $(i_r,j_r)\in D$.
Informally, $l<_{ne}m$ means $m$ is strictly north-east of $l$ in $t$ and
$l\LEQ_{se}m$ means $m$ is weakly south-east of $l$ in $t$.

The row-form of $w\in S_n$ is obtained by writing the rows of $t^Dw$ in one row so that, for each $i$, the $(i+1)$-th row is to the right of the $i$-th row.
It follows easily that if  $k<_{ne}k+1$ in $t^Dw$, then $l(ws_k)=l(w)-1$.
Moreover, we can prove the following lemma.

\begin{lemma}\label{R31}(Compare \cite[Lemma~3.4]{MPa15}).
Let $D$ be a diagram of size $n$, let $u\in S_n$ and let $k\in\mathbb N$ with $1\LEQ k\LEQ n-1$.
Suppose that $t^Du$ is a standard $D$-tableau.
Then (i) and (ii) below hold.\\[1ex]
\begin{tabular}{lp{5.68in}}
(i) & If $l(us_k)=l(u)-1$, then $k<_{ne}k+1$ in $t^Du$, and $t^Dus_k$ is a standard $D$-tableau.
\\[1ex]
(ii) & If $k+1<_{ne}k$ in $t^Du$, then $N^-(u)\varsubsetneqq N^-(us_k)$ (so $l(us_k)=l(u)+1$) and $t^Dus_k$ is a standard $D$-tableau.
\end{tabular}
\end{lemma}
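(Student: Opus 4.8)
The plan is to work directly with the combinatorial description of the reflections $N^-(u)$ and the geometry of a standard $D$-tableau, exploiting the fact that multiplying on the right by $s_k$ only swaps the entries $k$ and $k+1$ in $t^Du$ while leaving all other entries in place. Write $k=(i_k,j_k)t^Du$ and $k+1=(i_{k+1},j_{k+1})t^Du$, so that $s_k$ interchanges the nodes carrying $k$ and $k+1$. Recall from the remark just above the lemma that $l(us_k)=l(u)-1$ precisely when $k<_{ne}k+1$ in $t^Du$, i.e.\ $i_k>i_{k+1}$ and $j_k<j_{k+1}$; this is the only configuration for which the length drops, since if $k$ and $k+1$ share a row or column the standardness of $t^Du$ forces them to be comparable in the $\LEQ_{se}$ order and then $l(us_k)=l(u)+1$.

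For part~(i): assume $l(us_k)=l(u)-1$, so $k<_{ne}k+1$, i.e.\ $k$ is strictly south-west of $k+1$. I must check $t^Dus_k$ is again standard. Swapping $k$ and $k+1$ can only destroy standardness through a node $m$ with $k<m<k+1$ — but there is no such $m$ — or, after the swap, a pair involving $k$ or $k+1$ that violates the monotonicity condition. Before the swap, for every node $p\ne k,k+1$ with value less than $k$, standardness of $t^Du$ gives $p\LEQ_{se}k$ and $p\LEQ_{se}k+1$; for every node $q$ with value exceeding $k+1$, $k\LEQ_{se}q$ and $k+1\LEQ_{se}q$. Since after the swap the node formerly holding $k+1$ holds $k$ and vice versa, and since $p\LEQ_{se}(i_{k+1},j_{k+1})$ and $p\LEQ_{se}(i_k,j_k)$ are both retained (and similarly on the upper side), the defining inequalities for standardness all still hold: the only pair whose relative order changed is $(k,k+1)$ itself, and in $t^Dus_k$ the entry $k$ now sits at $(i_{k+1},j_{k+1})$ which is north-east of $(i_k,j_k)$, so these two nodes are incomparable in the $\LEQ_{se}$ order and impose no constraint. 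Hence $t^Dus_k$ is standard.

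For part~(ii): assume $k+1<_{ne}k$ in $t^Du$, so $i_{k+1}>i_k$ and $j_{k+1}<j_k$, i.e.\ $k$ is strictly north-east of $k+1$; in particular $k\LEQ_{se}k+1$ fails and $k+1\LEQ_{se}k$ fails, so this is an incomparable pair. I claim $N^-(u)\subseteq N^-(us_k)$ with the root $\epsilon_k-\epsilon_{k+1}$ newly entering. Translating the $<_{ne}$ data back through $t^Du$ into row-form: the positions of the values $k$ and $k+1$ in the row-form of $u$ are determined by reading $t^Du$ by rows, and $k$ lying in a higher row but further-right column than $k+1$ means that in the row-form $[1u,\dots,nu]$ the value $k$ appears \emph{before} the value $k+1$; equivalently $k^{-1}$ in the one-line notation sense, one checks $\epsilon_k-\epsilon_{k+1}\in N^+(u)$, and right multiplication by $s_k$ sends it into $\Phi^-$, so it enters $N^-$. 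For the inclusion $N^-(u)\subseteq N^-(us_k)$ one uses the standard fact $N^-(us_k)=N^-(u)\mathbin{\triangle}\{\alpha\}$ (symmetric difference) where $\alpha=\epsilon_k-\epsilon_{k+1}$, together with $\alpha\notin N^-(u)$, giving $N^-(us_k)=N^-(u)\cup\{\alpha\}\supsetneqq N^-(u)$ and hence $l(us_k)=l(u)+1$. Standardness of $t^Dus_k$ follows by the same bookkeeping as in part~(i), since again only the incomparable pair $(k,k+1)$ has its relative position altered.

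The main obstacle is the standardness verification: one has to argue carefully that exchanging $k$ and $k+1$ cannot break the $\LEQ_{se}$ monotonicity with any \emph{third} node, and the cleanest way is to note that every other node is either $\LEQ k-1$ or $\GEQ k+2$ in value, that standardness of $t^Du$ pins such a node weakly south-east or weakly north-west of \emph{both} occupied cells $(i_k,j_k)$ and $(i_{k+1},j_{k+1})$, and that the swap merely relabels those two cells — so all inequalities involving a third node are preserved, and the pair $\{k,k+1\}$ itself is incomparable before or after. The length statements are then immediate from the $N^\pm$ description of $l$ recalled in the preliminaries.
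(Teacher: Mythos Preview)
Your overall strategy matches the paper's --- use the row-form to translate the length condition into the relative position of $k$ and $k+1$ in $t^Du$, and then argue that swapping $k$ and $k+1$ preserves standardness --- but there are two genuine errors in the execution.

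\textbf{The standardness check inverts the definition.} You write that ``for every node $p$ with value less than $k$, standardness of $t^Du$ gives $p\LEQ_{se}k$ and $p\LEQ_{se}k+1$,'' and later that standardness ``pins such a node weakly south-east or weakly north-west of both occupied cells.'' That is the \emph{converse} of what standardness says. Standardness asserts: if two positions are $\LEQ_{se}$-comparable then their values are ordered; it does not say that ordered values force comparable positions. (Counterexample: diagram $\{(1,2),(2,1),(3,2)\}$ with standard filling $1,2,3$; the node holding $1$ is not $\LEQ_{se}$ the node holding $2$.) The correct argument runs forward: take any pair of positions $(a,b)\LEQ_{se}(c,d)$ and check that the value inequality survives the swap. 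If neither position is one of the swapped cells, nothing changes; if exactly one is, the other carries a value $m\notin\{k,k+1\}$, hence $m\LEQ k-1$ or $m\GEQ k+2$, and the inequality with the new value ($k$ or $k+1$) still holds; finally the two swapped cells themselves are $\LEQ_{se}$-incomparable by hypothesis, so they impose no constraint.

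\textbf{The added root in part~(ii) is misidentified.} Your formula $N^-(us_k)=N^-(u)\mathbin{\triangle}\{\epsilon_k-\epsilon_{k+1}\}$ is the rule for \emph{left} multiplication by $s_k$, not right. For right multiplication the root that flips depends on $u$: one has
\[
N^-(us_k)=N^-(u)\cup\{\epsilon_{ku^{-1}}-\epsilon_{(k+1)u^{-1}}\},
\]
which is exactly what the paper records. The hypothesis ``$k$ precedes $k+1$ in the row-form of $u$'' is the statement $ku^{-1}<(k+1)u^{-1}$, so $\epsilon_{ku^{-1}}-\epsilon_{(k+1)u^{-1}}$ is a positive root; it lies in $N^+(u)$ (its image under $u$ is $\epsilon_k-\epsilon_{k+1}$) but in $N^-(us_k)$. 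A quick sanity check in $S_3$: with $u=s_1$ and $k=2$, $N^-(u)=\{\epsilon_1-\epsilon_2\}$ and $N^-(us_2)=\{\epsilon_1-\epsilon_2,\epsilon_1-\epsilon_3\}$; the new root is $\epsilon_1-\epsilon_3$, not $\epsilon_2-\epsilon_3$.

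A minor further gap in part~(i): your justification that $k<_{ne}k+1$ is ``the only configuration for which the length drops'' only rules out the case where $k$ and $k+1$ share a row or column; you do not explicitly exclude $k+1<_{ne}k$. That case also gives $l(us_k)=l(u)+1$ (since then $k$ lies in a strictly earlier row and hence precedes $k+1$ in row-form), but it should be said.
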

\begin{proof}
(i)
If $l(us_k)=l(u)-1$, then $k+1$ precedes $k$ in the row-form of $u$.
The assumption that $t^Du$ is standard now forces $k<_{ne}k+1$.
It also ensures that $t^Dus_k$ is standard in view of the location of $k$ and $k+1$ in $t^Du$.

\smallskip
(ii)
If $k+1<_{ne}k$ in $t^Du$, then $k$ precedes $k+1$ in the row-form of $u$ and so $ku^{-1}<(k+1)u^{-1}$.
Hence, $N^-(us_k)=N^-(u)\cup \{\epsilon_{ku^{-1}}-\epsilon_{(k+1)u^{-1}}\}$.
Again the assumption that $t^Du$ is standard, together with the location of $k$ and $k+1$ in $t^Du$, ensure that $t^Dus_k$ is standard.
\end{proof}

For a diagram $D$ of size $n$, we define the subset $\Psi_D$ of $\Phi$ by $\Psi_D=\{\epsilon_l-\epsilon_m\in\Phi\colon l\ne m$ and $l\LEQ_{se}m$ in $t^D\}$.
Clearly $\Psi_D\subseteq \Phi^+$ since $t^D$ is standard.

\begin{lemma}\label{R32}
Let $D$ be a diagram of size $n$ and let $u\in S_n$.
Suppose that $t^Du$ is a standard $D$-tableau.
Then,\\[1ex]
\begin{tabular}{lp{5.68in}}
(i) & $\Psi_D\subseteq N^+(u)$ and, moreover, $\Psi_D=N^+(w_D)$.
\\[1ex]
(ii) & If $u\ne w_D$, then $N^-(u)\varsubsetneqq N^-(us_k)\subseteq N^-(w_D)$ for some $k$ with $1\LEQ k\LEQ n-1$ and, moreover, $t^Dus_k$ is a standard $D$-tableau.
\end{tabular}
\end{lemma}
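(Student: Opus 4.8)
The plan is to prove (i) first, then use it together with Lemma~\ref{R31} to deduce (ii). For part (i), note that $\Psi_D\subseteq N^+(u)$ is almost immediate: if $\epsilon_l-\epsilon_m\in\Psi_D$ then $l\LEQ_{se}m$ in $t^D$, which means $m$ lies weakly south-east of $l$; since $t^Du$ is standard and the entries of $t^D$ are exactly $1,\dots,n$ filled by rows, the positions of $l$ and $m$ relative to one another in $t^Du$ force $lu<mu$ precisely when $l<m$, so $\epsilon_l-\epsilon_m$ is sent to a positive root, giving $\epsilon_l-\epsilon_m\in N^+(u)$. Actually the cleaner route is: $t^Du$ standard means that whenever a node $(i',j')$ is weakly north-west of $(i'',j'')$ in $D$, the corresponding entries satisfy $(i',j')t^Du\LEQ (i'',j'')t^Du$; unwinding this through the bijection $t^D$ gives exactly the statement that every element of $\Psi_D$ lies in $N^+(u)$. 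For the equality $\Psi_D=N^+(w_D)$, recall $t^Dw_D=t_D$, the column-filling; I would check the reverse inclusion $N^+(w_D)\subseteq\Psi_D$ by a direct count, or (more slickly) observe that $t^Dw_D=t_D$ is standard so $\Psi_D\subseteq N^+(w_D)$ already holds, and then argue the two sets have the same cardinality. The cardinality of $N^+(w_D)$ is $\binom{n}{2}-l(w_D)$, and $|\Psi_D|$ counts pairs $l<m$ with $l\LEQ_{se}m$ in $t^D$; a short combinatorial identity (every pair of distinct entries of $t^D$ is in exactly one of the relations $<_{ne}$, $\LEQ_{se}$, or its reverses, and $l(w_D)$ counts the "inversions" introduced passing from row-filling to column-filling) closes this. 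I expect the bookkeeping here, matching up $l(w_D)$ with the number of non-$\LEQ_{se}$ pairs, to be the one place needing care.

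For part (ii), suppose $u\ne w_D$ with $t^Du$ standard. By (i), $\Psi_D\subseteq N^+(u)$, and since $u\ne w_D$ while $w_D$ is the unique element with $\Psi_D=N^+(w_D)=\Phi^+\setminus N^-(w_D)$ maximal among standard ones, we have $N^+(u)\subsetneqq N^+(w_D)$, equivalently $N^-(w_D)\subsetneqq N^-(u)$... wait, that is the wrong direction, so let me instead argue directly. Since $u\ne w_D$ and both $t^Du$, $t^Dw_D=t_D$ are standard, the row-form of $u$ is not equal to that of $w_D=t_D$ read off by rows; hence there must be some $k$ with $1\LEQ k\LEQ n-1$ such that $k+1$ appears before $k$ in the row-form of $t^Du$ — otherwise $t^Du$ would be forced to equal $t_D$ (the only standard $D$-tableau in which $1,2,\dots,n$ appear in increasing order along the row-form is the column-filling, by the structure of standard $D$-tableaux). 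For such a $k$ we have $k+1<_{ne}k$ in $t^Du$, and Lemma~\ref{R31}(ii) applies, giving $N^-(u)\subsetneqq N^-(us_k)$ and $t^Dus_k$ standard. It remains to see $N^-(us_k)\subseteq N^-(w_D)$: we have $N^-(us_k)=N^-(u)\cup\{\epsilon_{ku^{-1}}-\epsilon_{(k+1)u^{-1}}\}$ from the proof of Lemma~\ref{R31}(ii), and since $t^Dus_k$ is standard, part (i) applied to $us_k$ gives $\Psi_D\subseteq N^+(us_k)$, i.e. $N^-(us_k)\subseteq\Phi^+\setminus\Psi_D=N^-(w_D)$ by (i).

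So the whole argument reduces to two nontrivial points: (a) the combinatorial fact that the only standard $D$-tableau whose row-form is increasing is $t_D$, which lets me locate the descent $k$; and (b) the cardinality/identity computation needed for $\Psi_D=N^+(w_D)$ in (i). I expect (b) to be the main obstacle — establishing $\Psi_D\subseteq N^+(w_D)$ is free, but pinning down the reverse inclusion requires either a careful case analysis on pairs of nodes of $D$ or the length count, and one must be careful that $D$ is an arbitrary (principal) diagram, not necessarily special, so the geometry of "weakly south-east" versus "strictly north-east" does not partition all pairs — there can be pairs in neither relation (same row or same column, handled separately) — so the counting identity needs those cases folded in correctly.
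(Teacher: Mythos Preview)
Your part~(i) is essentially correct, though the paper takes a more direct route for the reverse inclusion $N^+(w_D)\subseteq\Psi_D$: rather than counting, it shows $\Phi^+\setminus\Psi_D\subseteq N^-(w_D)$ directly. If $\epsilon_p-\epsilon_q\in\Phi^+\setminus\Psi_D$ then $p<q$ but not $p\LEQ_{se}q$ in $t^D$; since $t^D$ is row-filled and $p<q$, one has $i_p\LEQ i_q$, so the failure of $p\LEQ_{se}q$ forces $i_p<i_q$ and $j_p>j_q$, i.e.\ $q<_{ne}p$. Then in the column-filling $t_D=t^Dw_D$ we get $qw_D<pw_D$, so $\alpha w_D$ is negative. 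Your worry that $\LEQ_{se}$ and $<_{ne}$ might fail to partition pairs is unfounded here: for $p<q$ in the row-filled $t^D$, exactly one of $p\LEQ_{se}q$ or $q<_{ne}p$ holds, and this dichotomy is precisely what would make your count go through---but once you have it, the direct argument is shorter.

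Part~(ii) contains a genuine error. You seek $k$ with ``$k+1$ before $k$ in the row-form of $t^Du$'', but such a $k$ is a \emph{descent} of $u$, giving $l(us_k)=l(u)-1$, the wrong direction; combined with standardness it yields $k<_{ne}k+1$, not $k+1<_{ne}k$, so Lemma~\ref{R31}(ii) does not apply. Your parenthetical claim that the only standard $D$-tableau with increasing row-form is the column-filling $t_D$ is also false: that tableau is $t^D$, the row-filling, and $t^D=t_D$ only when $w_D=1$. What you actually need is $k$ with $k+1$ lying in a strictly lower-indexed \emph{column} than $k$ in $t^Du$. The paper's argument is: if no such $k$ exists, the column indices of $1,2,\dots,n$ form a non-decreasing sequence, which (together with column-standardness) forces $t^Du=t_D$. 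So for $u\ne w_D$ such a $k$ exists; standardness then gives $k+1<_{ne}k$, and Lemma~\ref{R31}(ii) applies. Your final step, deducing $N^-(us_k)\subseteq N^-(w_D)$ from part~(i) applied to $us_k$, is correct.
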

\begin{proof}
We assume the hypothesis.

(i)
Suppose that $1\LEQ l,m\LEQ n$ with $l\ne m$ and $l\LEQ_{se}m$ in $t^D$.
Since $t^D$ (resp., $t^Du$) is  standard, $l<m$ (resp., $lu<mu$).
Hence $\epsilon_l-\epsilon_m\in N^+(u)$ showing that $\Psi_D\subseteq N^+(u)$.
In particular, $\Psi_D\subseteq N^+(w_D)$ since $t_D\,(=t^Dw_D)$ is standard.

Now suppose $\alpha=\epsilon_p-\epsilon_q\in\Phi^+-\Psi_D$.
First, $p<q$ since $\alpha\in\Phi^+$.
In view of the way $t^D$ is constructed we have  $q<_{ne}p$ in $t^D$.
We also have $ qw_D<pw_D$ from the way $t_D\,(=t^Dw_D)$ is constructed.
So $\alpha w_D=\epsilon_{pw_D}-\epsilon_{qw_D}\in\Phi-\Phi^+$.
Hence, $\alpha\in N^-(w_D)$. Thus, $\Phi^+-\Psi_D\subseteq N^-(w_D)=\Phi^+-N^+(w_D)$.
Hence, $N^+(w_D)\subseteq \Psi_D$.
So $N^+(w_D)=\Psi_D$.

\smallskip
(ii)
Suppose that $u\ne w_D$.
Then $t^Du\ne t_D$, so there exists $k$ with $1\LEQ k\LEQ n-1$ such that $k+1$ appears in $t^Du$ in a column of lower index than the column $k$ appears in.
(If there is no such $k$, clearly this forces $t^Du=t_D$.)
Since $t^Du$ is standard, we must have $k+1<_{ne}k$ in $t^Du$.
The result now follows from Lemma~\ref{R31}(ii) and item~(i) of this lemma.
\end{proof}

We continue with $D$  a diagram of size $n$ and  $u$  a prefix of $w_D$.
Beginning with the standard tableau $t_D\,(=t^Dw_D)$ and applying Lemma~\ref{R31}(i) a finite number of times, we see that $t^Du$ is a standard $D$-tableau.
Conversely, if we suppose that $t^Dv$, where $v\in S_n$, is a standard $D$-tableau, a finite number of applications of Lemma~\ref{R32}(ii) shows that $v$ is a prefix of $w_D$.
(Note that by Lemma~\ref{R32}(ii) we know that $v'=w_D$ whenever $t^Dv'$ is standard and satisfies $N^-(v')=N^-(w_D)$.)
Hence we have,

\begin{result}[{\cite[Proposition 3.5]{MPa15}.
Compare \cite[Lemma 1.5]{DJa86}}]
\label{res:3a}
Let $D$ be a diagram.
Then the mapping $u\mapsto t^{D}u$ is a bijection of the set
of prefixes of $w_{D}$ to the set of standard
$D$-tableaux.
\proofblock
\end{result}

The argument presented above provides an interpretation of the proof of Result~\ref{res:3a} given in~\cite{MPa15} in terms of certain subsets of the root system $\Phi$.
Writing $\tilde N(u)=(N^-(w_D)-N^-(u))u$ for $u\in S_n$ with $t^Du$  standard,
the set $\tilde N(u)$ can naturally be identified with the set $N_u$ in the proof of~\cite[Proposition~3.5]{MPa15}.
In particular, with $D$, $u$ and $k$ as in Lemma~\ref{R32}(ii), we have $\tilde N(us_k)s_k\subseteq \tilde N(u)$ and $\tilde N(us_k)s_k=\tilde N(u)-\{\epsilon_k-\epsilon_{k+1}\}$.

\begin{remark}
(i) Lemma~\ref{R32}(ii) provides a straightforward process for completing a reduced expression of any prefix of $w_D$ to a reduced expression for $w_D$.
Compare also with~\cite[Algorithm~1]{MPa15}.
\\[1ex]
(ii)
Considering the root-subsystem of $\Phi$ corresponding to the parabolic subgroup $W_{J(\lambda_D)}$, we can easily observe that $\mathfrak{X}_{J(\lambda_D)}=\{w\in S_n\colon t^Dw$ is row-standard$\}$; see also~\cite[Lemma~1.1]{DJa86}.
In particular, $w_D$ and all its prefixes belong to $\mathfrak{X}_{J(\lambda_D)}$.
\end{remark}

In general, an element of $W$ will have an expression of the form
$w_D$ for many different diagrams $D$ of size $n$.
If $\lambda\vDash n$ and $d\in \mathfrak{X}_{J(\lambda)}$, a way to locate suitable diagrams $D\in\mathcal{D}^{(\lambda)}$  with $d=w_D$ is given in~{\cite[Proposition~3.7]{MPa15}}.
The proof involves the construction of a very particular diagram $D=D(d,\lambda)\in\mathcal D^{(\lambda)}$ with $w_D=d$.
This is formed by partitioning the row-form
of $d$ in parts of sizes corresponding to $\lambda$, placing these
parts on consecutive rows and moving the entries on the rows minimally
to make a tableau of the form $t_D$.

Moreover, in~\cite[Proposition~3.8]{MPa15} it is shown that among all diagrams $E\in\mathcal D^{(\lambda)}$ with $w_E=d$, diagram $D(d,\lambda)$ is the unique one with the minimum number of columns.
Also described in the same proposition is the way any such diagram $E$ relates to $D(d,\lambda)$.

\begin{remark}\label{rSpecial}
Let $E$ be a special diagram.
Combining Result~\ref{res:2a} with~\cite[Proposition 3.8]{MPa15}, we see that $E=D(w_E,\lambda_E)$.
\end{remark}

As in~\cite{MPa17}, for a composition $\lambda$ of $n$, we define the following subsets
of $\mathfrak{X}_{J(\lambda)}$ and $\mathcal{D}^{(\lambda)}$:
\begin{equation*}
\renewcommand{\arraystretch}{1.0}
\label{eqn:1a}
\begin{array}{rcl}
Z(\lambda) & = &
\{e\in\mathfrak{X}_{J(\lambda)}\colon w_{J(\lambda)}e\sim_Rw_{J(\lambda)}\},
\\
Z_s(\lambda) & = &
\{e\in Z(\lambda)\colon e=w_D
\mbox{ for some special diagram } D\in\mathcal{D}^{(\lambda)}\},
\\
Y(\lambda) & = &
\{x\in Z(\lambda)\colon x \mbox{ is not a prefix of any other }
y\in Z(\lambda)\},
\\
Y_s(\lambda) & = &
Y(\lambda)\cap Z_s(\lambda)=\{y\in Y(\lambda)\colon D(y,\lambda)\mbox{ is special}\},
\\
\mathcal{E}^{(\lambda)} & = &
\{D(y,\lambda)\colon y\in Y(\lambda)\}\ \mbox{ and }\
\mathcal{E}_s^{(\lambda)}=\{D\in\mathcal{E}^{(\lambda)}\colon D \mbox{ is special}\}.
\end{array}
\end{equation*}
\par
In view of Result~\ref{res:2c}, $Z(\lambda)$ is closed
under the taking of prefixes and
$w_{J(\lambda)}Z(\lambda)$ is the
right cell of $W$ containing $w_{J(\lambda)}$.
We denote this right cell by $\mathfrak{C}(\lambda)$.
A knowledge of $Y(\lambda)$ leads directly to $Z(\lambda)$
by determining all prefixes.
We call $Y(\lambda)$ the \emph{rim} of the cell
$\mathfrak{C}(\lambda)$.
The map $y\mapsto D(y,\lambda)$ from $Y(\lambda)$ to $\mathcal{E}^{(\lambda)}$ is a bijection, so $Y(\lambda)=\{w_D\colon D\in\mathcal{E}^{(\lambda)}\}$.
Hence, in order to give an explicit description of  $\mathfrak{C}(\lambda)$ it is enough to locate the diagrams in $\mathcal{E}^{(\lambda)}$.

\begin{remark}\label{rem:2.2}
In the case that $\lambda$ is a partition of $n$, it follows from
\cite[Lemma~3.3]{MPa05} that $\mathcal{E}^{(\lambda)}=\mathcal{E}_s^{(\lambda)}=\{V(\lambda)\}$.
\end{remark}

\subsection{Paths and admissible diagrams}\label{secPathsAdmDiagr}

In~\cite{MPa17} we investigated how the subsequence type of a diagram $D$, defined in Definition~\ref{def:2.1a} below, relates to the shape of
the Robinson-Schensted tableau of the element $w_{J(\lambda_D)}w_D$.
The work in~\cite{Schensted1961} and~\cite{Greene1974}, see also~\cite[Lemma~3.2]{MPa17}, motivates the following definition.
\begin{definition}[Compare with the definition before Remark~3.3
in \cite{MPa17}]
\label{def:2.1a}
Let $D$ be a diagram of size $n$.
\begin{longtable}{lp{5.68in}}
(i) &
A \emph{path} of \emph{length} $m$ in $D$ is a non-empty sequence of
nodes $((a_i,b_i))_{i=1}^m$ of $D$ such that $a_i<a_{i+1}$ and
$b_i\LEQ b_{i+1}$ for $i=1,\ldots,m-1$.
\\[1ex]
(ii) &
For $k\in\mathbb{N}$, a \emph{$k$-path} in $D$ is a sequence of $k$
mutually disjoint paths in $D$;
the paths in this sequence are the \emph{constituent paths} of the
$k$-path.
The \emph{length} of a $k$-path is the sum of the lengths of its
constituent paths; this is the total number of nodes in the $k$-path.
The \emph{type} of a $k$-path is the sequence of lengths of its paths
in non-strictly decreasing order---in particular, the type of a
$k$-path is a $k$-part partition.
The \emph{support} of a $k$-path $\Pi$, which we denote by $s(\Pi)$,
is the set of nodes occurring in its paths.
\\[1ex]
(iii) &
Let $\Pi$ be a $k$-path in  $D$ and let $k'\LEQ k$.
A \emph{$k'$-subpath} of $\Pi$ is a $k'$-path in $D$ whose constituent
paths are also constituent paths of $\Pi$.
\\[1ex]
(iv) &
A $k$-path and a $k'$-path in $D$ are said to be \emph{equivalent} to one
another if they have the same support. 
\\[1ex]
(v) &
The diagram $D$ is said to be of \emph{subsequence type}
$\nu$, where $\nu=(\nu_1,\ldots,\nu_r)\vdash n$,
if the maximum length of a $k$-path in $D$ is
$\nu_1+\ldots+\nu_k$ whenever $1\LEQ k\LEQ r$.
We call $D$ \emph{admissible} if it is of subsequence
type $\lambda_D'$.
\end{longtable}
\end{definition}
\par
See \cite[Remark~3.3]{MPa17}.
Using the notion of \emph{$k$-increasing subsequence} of the row form
of a permutation (see, for example, \cite[Definition~3.5.1]{Sagan}),
we see from \cite[Lemma~3.2 and Remark~3.3]{MPa17} that there is a bijection between the
set of $k$-paths in a diagram $D$ and the set of $k$-increasing
subsequences in $w_{J(\lambda_D)}w_D$, for any positive integer $k$.
In fact,
the increasing subsequences occurring in the row-form of
$w_{J(\lambda_D)}w_D$ are precisely the ones which have form
$((a_i,b_i)t_D)_{i=1}^m$ for some path $((a_i,b_i))_{i=1}^m$ inside $D$.
\par
As the support of a path defines a unique path, we may refer to the
path by just giving the support.
However, in general, a set of nodes may form the support of many
different $k$-paths if $k\GEQ2$.
\par
If $D,E\in\mathcal{D}^{(\lambda)}$ for some $\lambda\vDash n$, there
is a natural bijection $\theta_{E,D}\colon E\to D$ given by:
$(a,b)\theta_{E,D}$ is the $l$-th node on the $i$-th row of $D$ if
$(a,b)$ is the $l$-th node on the $i$-th row of $E$
for all nodes $(a,b)$ of $E$.
We write
$(a,b)\theta_{E,D}
=(a,(a,b){\theta_{E,D}''})$.
\begin{proposition}
\label{prop:2.3a}
Let $\lambda\vDash n$, let $D,E\in\mathcal{D}^{(\lambda)}$ and
let $\theta=\theta_{E,D}$.
Suppose that $t^Ew_D$ is a standard $E$-tableau.
If $\Pi=(\pi_1,\dots,\pi_k)$ be a $k$-path in $E$ then
$\Pi\theta=(\pi_1\theta,\dots,\pi_k\theta)$ is a $k$-path in $D$.
\end{proposition}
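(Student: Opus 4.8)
The plan is to reduce everything to one inequality about the column indices of $\theta=\theta_{E,D}$. Since $\theta$ preserves row indices, i.e.\ $(a,b)\theta=(a,(a,b)\theta'')$, the condition $a_i<a_{i+1}$ holding along each constituent path of $\Pi$ is automatically inherited by its $\theta$-image; and since $\theta$ is a bijection $E\to D$, the supports of $\pi_1\theta,\dots,\pi_k\theta$ are mutually disjoint because those of $\pi_1,\dots,\pi_k$ are. So the whole statement reduces to showing that $(a,b)\theta''\LEQ(a',b')\theta''$ whenever $(a,b),(a',b')\in E$ with $a<a'$ and $b\LEQ b'$, which is exactly the condition satisfied by two consecutive nodes of a constituent path of $\Pi$.

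To prove this I would first identify the tableau $t^Ew_D$ explicitly. If $p$ is the $l$-th node of row $a$ of $E$, then $pt^E=\widehat\lambda_{a-1}+l$, where $\widehat\lambda_{a-1}=\lambda_1+\dots+\lambda_{a-1}$ is common to $D$ and $E$ since $\lambda_D=\lambda_E=\lambda$; the node of $D$ carrying the entry $\widehat\lambda_{a-1}+l$ under $t^D$ is precisely the $l$-th node of row $a$ of $D$, namely $p\theta$. Hence, using $t^Dw_D=t_D$, we get $(pt^E)w_D=(p\theta)t_D$, so $t^Ew_D$ is the $E$-tableau whose entry at each node $p$ is $(p\theta)t_D$. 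Now take $p=(a,b)$ and $p'=(a',b')$ in $E$ with $a<a'$ and $b\LEQ b'$; since $t^Ew_D$ is standard, $(p)(t^Ew_D)\LEQ(p')(t^Ew_D)$, i.e.\ $(p\theta)t_D\LEQ(p'\theta)t_D$. Writing $p\theta=(a,c)$ and $p'\theta=(a',c')$ and recalling that $t_D$ fills $D$ column by column—so that every entry of $t_D$ in column $c'$ is smaller than every entry in column $c$ when $c'<c$—we conclude $c\LEQ c'$, that is, $(a,b)\theta''\LEQ(a',b')\theta''$, as required.

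Applying this to every consecutive pair of nodes in each constituent path of $\Pi$ shows that each $\pi_i\theta$ is a path in $D$, and together with the disjointness observation above this gives that $\Pi\theta$ is a $k$-path in $D$. The only real work is the middle step, namely the bookkeeping that identifies the entries of $t^Ew_D$ with the values of $t_D$ taken along $\theta$; once that identity is in hand, the result follows immediately from the definition of a standard tableau and the column-filling order of $t_D$, and this is precisely where the hypothesis is used. (It is genuinely needed: for general $D,E\in\mathcal D^{(\lambda)}$ the map $\theta$ need not send paths to paths.)
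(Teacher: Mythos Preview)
Your proof is correct and follows essentially the same approach as the paper's: both arguments hinge on the identity $(p)t^Ew_D=(p\theta)t_D$ and then use the standardness of $t^Ew_D$ together with the column-filling order of $t_D$ to conclude that each $\pi_j\theta$ is a path. Your version is a bit more explicit in separating the row and column conditions and in spelling out why the column-filling of $t_D$ forces $(a,b)\theta''\LEQ(a',b')\theta''$, whereas the paper compresses this into the single remark that the nodes corresponding to a strictly increasing sequence of $t_D$-values form a path; but the substance is the same.
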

\begin{proof}
First, fix a particular $j$.
Let $\pi_j=((a_{i,j},b_{i,j}))_{i=1}^{m_j}$, for
$j=1,\ldots,k$.
Since $t^Ew_D$ is a standard $E$-tableau, and $\pi_j$ is a path in
$E$, $((a_{i,j},b_{i,j})t^Ew_D)_{i=1}^{m_j}$
is a strictly increasing sequence of integers.
Since $(a_{i,j},b_{i,j})t^Ew_D
=
((a_{i,j},b_{i,j})\theta)t^Dw_D
=
((a_{i,j},b_{i,j})\theta)t_D
$,
and $t_D$ is a standard $E$-tableau,
the sequence $((a_{i,j},b_{i,j})\theta)_{i=1}^{m_j}=\pi_j\theta$ of
nodes of $D$, which correspond in $t_D$ to the integers of the preceding
strictly increasing sequence, form a path in $D$.
\par
Since $\theta$ is a bijection,
$\Pi\theta=(\pi_1\theta,\dots,\pi_k\theta)$ is a $k$-path in $D$.
\end{proof}

The following observations can be made about paths and admissible diagrams.
\begin{result}[{\cite[Proposition~3.4]{MPa17}}]
\label{res:6a}
Let $D$ be a diagram and write
$\lambda_D'=(\lambda_1',\dots,\lambda_{r'}')$.
If $1\LEQ u\LEQ r'$, then a $u$-path in $D$ of length
$\sum_{1\LEQ j\LEQ u}\lambda_j'$ (if it exists) contains all $\lambda_i$ nodes on
the $i$-th row of $D$ if $\lambda_i\LEQ u$ and exactly $u$ nodes on
all remaining rows.
\par
If for each $u$, $1\LEQ u\LEQ r'$, there is a $u$-path $\Pi_u$ such
that, for all $i$, $\Pi_u$ has exactly $\min\{u,\lambda_i\}$ nodes on
the $i$-th row, then $D$ is an admissible diagram.
\proofblock
\end{result}

\begin{result}[{\cite[Propositions 3.5 and~3.6~and~Corollary~3.7]{MPa17}}]
\label{res:8a}
Let $D$ be a diagram of size $n$ and let $\nu$ be a partition of $n$.
\\[3pt]
\begin{tabular}{rp{5.5in}}
(i) & If $D$ is of subsequence type $\nu$ then
$\mu_D''\unlhd\nu\unlhd\lambda_D'$.
Moreover, $\shape (w_{J(\lambda_D)}w_D)=\nu$
if, and only if, $D$ is of subsequence type $\nu$.
\\[3pt]
(ii) & $w_{J(\lambda_D)}w_D\sim_R w_{J(\lambda_D)}$
if, and only if, $D$ is admissible. 
\par
In particular, if $D$ is a special diagram then
$D$ is admissible.
\end{tabular}
\proofblock
\end{result}

\begin{lemma}\label{BlemmaSpecial}
Let $\lambda\vDash n$ and let $y\in Y(\lambda)$.
Also let $D=D(y,\lambda)$.
Suppose that $E\in\mathcal{D}^{(\lambda)}$ is an admissible diagram with $E=D(w_E,\lambda)$ and that $t^Ew_D$ is a standard $E$-tableau.
Then $E=D$.
\end{lemma}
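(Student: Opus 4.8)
The plan is to exploit the fact that $y\in Y(\lambda)$ is, by definition, maximal in $Z(\lambda)$ under the prefix order, together with the hypothesis that $t^Ew_D$ is standard, which (by Result~\ref{res:3a} applied to the diagram $E$) tells us that $w_D$ is a prefix of $w_E$. First I would show that $w_E\in Z(\lambda)$. Since $E$ is admissible, Result~\ref{res:8a}(ii) gives $w_{J(\lambda)}w_E\sim_R w_{J(\lambda)}$, and since $E\in\mathcal D^{(\lambda)}=D(w_E,\lambda)$ we have $w_E\in\mathfrak X_{J(\lambda)}$ (indeed $w_E$ and all its prefixes lie in $\mathfrak X_{J(\lambda)}$ by the remark following Result~\ref{res:3a}); hence $w_E\in Z(\lambda)$. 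Similarly $w_D=y\in Z(\lambda)$, and we have just seen $w_D$ is a prefix of $w_E$. By the maximality property defining $Y(\lambda)$, an element of $Z(\lambda)$ that is a prefix of another element of $Z(\lambda)$ must equal it; so $w_D=w_E$, i.e. $y=w_E$.

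Now I would upgrade the equality $w_D=w_E$ of group elements to the equality $D=E$ of diagrams. Both $D$ and $E$ lie in $\mathcal D^{(\lambda)}$, both satisfy $D=D(w_D,\lambda)$ and $E=D(w_E,\lambda)$ (for $D$ this is part of the definition of $D(y,\lambda)$; for $E$ it is a hypothesis), and $w_D=w_E$. Since the construction $d\mapsto D(d,\lambda)$ depends only on $d$ and $\lambda$, we get $D=D(w_D,\lambda)=D(w_E,\lambda)=E$, as required.

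The only point that needs a little care — and the step I expect to be the mild obstacle — is verifying that $w_D$ is genuinely a \emph{prefix} of $w_E$ from the hypothesis that $t^Ew_D$ is a standard $E$-tableau. This is exactly the content of Result~\ref{res:3a} for the diagram $E$: the prefixes of $w_E$ are precisely those $u$ with $t^Eu$ standard. Applying this with $u=w_D$ gives the claim immediately, so in fact there is no real obstacle; the proof is a short assembly of Result~\ref{res:3a}, Result~\ref{res:8a}(ii), the remark that prefixes of $w_E$ lie in $\mathfrak X_{J(\lambda)}$, and the maximality defining $Y(\lambda)$, finished off by the uniqueness of the diagram $D(\cdot,\lambda)$ attached to a fixed element of $\mathfrak X_{J(\lambda)}$.
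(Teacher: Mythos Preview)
Your proof is correct and follows essentially the same route as the paper's own proof: use Result~\ref{res:3a} to get that $w_D$ is a prefix of $w_E$, use admissibility of $E$ (Result~\ref{res:8a}(ii)) to get $w_E\in Z(\lambda)$, invoke the maximality defining $Y(\lambda)$ to conclude $w_E=w_D$, and then apply $D(\cdot,\lambda)$ to obtain $E=D$. The only minor slips are a typo (``$E\in\mathcal D^{(\lambda)}=D(w_E,\lambda)$'') and a loose phrasing of the maximality step (it is elements of $Y(\lambda)$, not arbitrary elements of $Z(\lambda)$, that admit no strictly longer prefix-successor in $Z(\lambda)$), but the intended argument is clear and matches the paper.
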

\begin{proof}
First observe that $w_D=y\in Y(\lambda)$.
Now $w_E\in Z(\lambda)$ since $E$ is admissible.
Since $t^Ew_D$ is a standard $E$-tableau, $w_D$ is a prefix of $w_E$
by Result~\ref{res:3a}.
Since $w_D\in Y(\lambda)$,  $w_E=w_D$.
Hence $D=D(y,\lambda)=D(w_D,\lambda)=D(w_E,\lambda)=E$.
\end{proof}

Combining Remark~\ref{rSpecial}, Result~\ref{res:8a}(ii) and Lemma~\ref{BlemmaSpecial} we get the following corollary.

\begin{corollary}\label{lemmaSpecial}
Let $\lambda\vDash n$ and let $y\in Y(\lambda)$.
Also let $D=D(y,\lambda)$.
Suppose that $E\in\mathcal{D}^{(\lambda)}$ is a special diagram and that $t^Ew_D$ is a standard $E$-tableau.
Then $E=D$.
\end{corollary}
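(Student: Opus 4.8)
The plan is to obtain Corollary~\ref{lemmaSpecial} as a direct consequence of Lemma~\ref{BlemmaSpecial}, so the only real work is to verify that a special diagram $E\in\mathcal D^{(\lambda)}$ satisfies the two structural hypotheses of that lemma, namely that $E$ is admissible and that $E=D(w_E,\lambda)$. Both of these are already recorded in the excerpt: admissibility of a special diagram is the ``in particular'' clause of Result~\ref{res:8a}(ii), and the equality $E=D(w_E,\lambda_E)$ for special $E$ is exactly Remark~\ref{rSpecial} (which was itself deduced by combining Result~\ref{res:2a} with \cite[Proposition~3.8]{MPa15}). Note that here $\lambda_E=\lambda$ since $E\in\mathcal D^{(\lambda)}$.

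So the proof runs as follows. First I would assume the hypotheses: $\lambda\vDash n$, $y\in Y(\lambda)$, $D=D(y,\lambda)$, $E\in\mathcal D^{(\lambda)}$ is special, and $t^Ew_D$ is a standard $E$-tableau. Next I would invoke Result~\ref{res:8a}(ii) to conclude that $E$, being special, is admissible. Then I would invoke Remark~\ref{rSpecial} to conclude that $E=D(w_E,\lambda_E)=D(w_E,\lambda)$. At this point all four hypotheses of Lemma~\ref{BlemmaSpecial} hold — $\lambda\vDash n$, $y\in Y(\lambda)$, $D=D(y,\lambda)$, and $E\in\mathcal D^{(\lambda)}$ is admissible with $E=D(w_E,\lambda)$ and $t^Ew_D$ standard — so applying that lemma gives $E=D$, which is the desired conclusion.

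There is essentially no obstacle: the corollary is a bookkeeping assembly of three already-established facts, and the phrase ``Combining Remark~\ref{rSpecial}, Result~\ref{res:8a}(ii) and Lemma~\ref{BlemmaSpecial}'' preceding the statement signals exactly this. The only point that requires the slightest care is making sure the composition attached to $E$ in Remark~\ref{rSpecial} is identified with $\lambda$, which is immediate from $E\in\mathcal D^{(\lambda)}$, i.e.\ $\lambda_E=\lambda$. If I wanted to be fully self-contained I could alternatively bypass Lemma~\ref{BlemmaSpecial} and argue directly: since $E$ is admissible, $w_E\in Z(\lambda)$; since $t^Ew_D$ is standard, $w_D=y$ is a prefix of $w_E$ by Result~\ref{res:3a}; since $y\in Y(\lambda)$ is maximal under prefixes in $Z(\lambda)$, this forces $w_E=y=w_D$; and then $E=D(w_E,\lambda)=D(w_D,\lambda)=D(y,\lambda)=D$ using Remark~\ref{rSpecial} for the first equality. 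Either way the argument is short.
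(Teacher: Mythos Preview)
Your proposal is correct and follows exactly the paper's approach: the paper simply states that the corollary is obtained by combining Remark~\ref{rSpecial}, Result~\ref{res:8a}(ii) and Lemma~\ref{BlemmaSpecial}, and you have spelled out precisely that combination (including the minor check that $\lambda_E=\lambda$). Your optional self-contained alternative is also fine and in fact just reproduces the proof of Lemma~\ref{BlemmaSpecial} inline.
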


\par
The \emph{reverse} composition $\dot\lambda$ of a composition
$\lambda=(\lambda_1$, $\dots$, $\lambda_r)$ is the composition
$(\lambda_r$, $\dots$, $\lambda_1)$ obtained by reversing the order of
the entries.
For a principal diagram $D\in\mathcal{D}^{(\lambda)}$,
the diagram $\dot D\in\mathcal{D}^{(\dot\lambda)}$ is the diagram
obtained by rotating $D$ through $180^{\circ}$.
If $D\in\mathcal{D}^{(\lambda,\mu)}$,
then $\dot D\in\mathcal{D}^{(\dot\lambda,\dot\mu)}$.
Since rotating $D$ through $180^{\circ}$ maps $k$-paths into $k$-paths,
for any $k$, diagrams $D$ and $\dot D$ have the same subsequence type.
\begin{remark}[See {\cite[Proposition~3.9]{MPa17}}]
\label{res:10a}
Let $\lambda\vDash n$ and consider the (graph) automorphism of $W$ given by $z\mapsto w_0 z w_0$ ($z\in W$) where $w_0$ is the longest element of $W$ (recall that $w_0^2$ is the identity).
Since $w_0s_iw_0=s_{n-i}$ for $1\LEQ i\LEQ n-1$, a reduced form for $w_0 z w_0$ can be obtained from a reduced form for $z$ by replacing $s_i$ with $s_{n-i}$ for $1\LEQ i\LEQ n-1$.
Moreover, if $D\in\mathcal D^{(\lambda)}$ then $t^{\dot D}$ (resp., $t_{\dot D}$) can be obtained from $t^Dw_0$ (resp., $t_Dw_0$) by rotating through $180^{\circ}$.
Since $t^Dw_0(w_0w_Dw_0)=t_Dw_0$, we get $w_{\dot D}=w_0w_Dw_0$.
We can thus easily determine the prefixes of $w_{\dot D}$ if we know the prefixes of $w_D$.
\\
A consequence of the above remarks is that the map $z\mapsto w_0zw_0$ induces an injection $Z(\lambda)\to Z(\dot\lambda)$ which restricts to injections $Y(\lambda)\to Y(\dot\lambda)$ and $Y_s(\lambda)\to Y_s(\dot\lambda)$.
Since the reverse composition of $\dot\lambda$ is the composition $\lambda$ itself, we see that the above injections are in fact bijections.
We conclude that the map $D\mapsto \dot D$ from $\mathcal D^{(\lambda,\mu)}$ to $\mathcal D^{(\dot\lambda,\dot\mu)}$ induces a bijection between the sets $\mathcal E^{(\lambda)}$ and $\mathcal E^{(\dot\lambda)}$.
\end{remark}
\par
Note that if $\rho$ is the representation of $S_n$ corresponding to
the cell $\mathfrak{C}(\lambda)$, then the representation corresponding
to $\mathfrak{C}(\dot\lambda)$ is given by $s_i\mapsto s_{n-i}\rho$
for all $i$.
\par
We now see that partitions and their reverses are the only
compositions for which the corresponding cells consist of prefixes
of a single element.
%
%
\begin{theorem}\label{thm:2.16a}
Let $\lambda\vDash n$.
Then $|Y(\lambda)|=1$ if, and only if, either $\lambda$ or
$\dot\lambda$ is a partition.
\end{theorem}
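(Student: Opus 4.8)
The plan is to prove the two implications separately, using the structural results already established, especially Remark~\ref{rem:2.2}, Remark~\ref{res:10a}, Corollary~\ref{lemmaSpecial} and the admissibility/subsequence-type machinery of Results~\ref{res:6a} and~\ref{res:8a}.

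For the easy direction, suppose $\lambda$ is a partition. Then Remark~\ref{rem:2.2} gives $\mathcal{E}^{(\lambda)}=\{V(\lambda)\}$, and since $Y(\lambda)=\{w_D\colon D\in\mathcal{E}^{(\lambda)}\}$ we get $|Y(\lambda)|=1$. If instead $\dot\lambda$ is a partition, then by the first case $|Y(\dot\lambda)|=1$; Remark~\ref{res:10a} supplies a bijection $Y(\lambda)\to Y(\dot\lambda)$ (induced by $z\mapsto w_0zw_0$), so $|Y(\lambda)|=1$ as well. This half is essentially bookkeeping with results already in hand.

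For the hard direction, I would prove the contrapositive: if neither $\lambda$ nor $\dot\lambda$ is a partition, then $|Y(\lambda)|\GEQ 2$. The first step is to extract what it means for $\lambda=(\lambda_1,\dots,\lambda_r)$ to be ``not a partition and not a reversed partition'': there must exist indices $p<q$ with $\lambda_p<\lambda_q$ (a strict ascent, since $\lambda$ is not weakly decreasing) and indices $p'<q'$ with $\lambda_{p'}>\lambda_{q'}$ (a strict descent, since $\dot\lambda$ is not weakly decreasing, i.e. $\lambda$ is not weakly increasing). The strategy is then to exhibit two distinct elements of $Y(\lambda)$. One natural candidate to keep in mind is the element $w_D$ attached to a carefully chosen admissible (ideally special) diagram $D\in\mathcal{D}^{(\lambda)}$ that is \emph{not} the ``column-minimal'' diagram $D(w_{J(\lambda)}w_0\cdot\text{(something)})$; more precisely, I would look for two special diagrams $D_1,D_2\in\mathcal{D}^{(\lambda)}$ that are genuinely different and whose associated $w_{D_i}$ are both maximal in $Z(\lambda)$ under the prefix order — equivalently, by Corollary~\ref{lemmaSpecial}, it suffices to produce two distinct diagrams in $\mathcal{E}_s^{(\lambda)}$, or even one diagram in $\mathcal{E}^{(\lambda)}$ that is provably not of the form forcing uniqueness. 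Using the ascent (resp.\ descent), one can build a special diagram $D\in\mathcal{D}^{(\lambda,\mu)}$ by choosing a column-composition $\mu$ with $\lambda''=\mu'$ that differs from the ``sorted'' choice; Result~\ref{res:2a} guarantees such a $D$ is special (hence admissible by Result~\ref{res:8a}(ii)), and distinct valid choices of $\mu$ give distinct diagrams. The bijection $D\mapsto D(w_D,\lambda_D)$ on special diagrams (Remark~\ref{rSpecial}) then forces the corresponding rim elements to be distinct provided the diagrams lie in $\mathcal{E}^{(\lambda)}$, i.e.\ are not prefixes of one another's $w_D$; this non-prefix condition is where one uses maximality, and one can often arrange it by a symmetry or length argument, or by checking directly that neither $t^{D_1}w_{D_2}$ nor $t^{D_2}w_{D_1}$ is standard.

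The main obstacle will be this last point: showing that the two diagrams (or rim elements) one writes down are genuinely \emph{incomparable in the prefix order}, so that both survive into $Y(\lambda)$ rather than one being absorbed into the prefix-closure of the other. I expect the cleanest route is to pick $D_1$ and $D_2$ to be ``mirror-related'' along the ascent/descent — for instance obtained from each other by a local rearrangement of two rows of unequal length — and then to show via Lemma~\ref{R31}/Lemma~\ref{R32} (standardness of $t^{D_i}w_{D_j}$) that neither $w_{D_i}$ is a prefix of the other, perhaps after replacing $D_i$ by $D(w_{D_i},\lambda)$ to get into $\mathcal{E}^{(\lambda)}$. An alternative, possibly slicker, plan: reduce to a minimal bad case. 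If $\lambda$ is neither a partition nor a reversed partition, it has a ``subcomposition'' pattern (three consecutive-in-value parts realizing both an ascent and a descent, or two disjoint pairs), and one can first treat a small explicit composition such as $(1,2,1)$ or $(a,b,a)$ with $a\ne b$ by direct computation of $Y(\lambda)$, then bootstrap to the general case by an embedding/induction argument in the spirit of Proposition~\ref{prop:3.2a} referenced in the introduction. I would attempt the direct two-diagram construction first and fall back on the reduction-to-small-cases approach if the incomparability bookkeeping becomes unwieldy.
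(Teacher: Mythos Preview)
Your easy direction is fine and matches the paper. The hard direction has the right overall shape (contrapositive, build two special diagrams in $\mathcal{D}^{(\lambda)}$), but there is a genuine gap in the logic you propose for concluding $|Y(\lambda)|\GEQ 2$.

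You want to produce special diagrams $D_1,D_2\in\mathcal{D}^{(\lambda)}$ and then argue that ``neither $t^{D_1}w_{D_2}$ nor $t^{D_2}w_{D_1}$ is standard'', i.e.\ that $w_{D_1}$ and $w_{D_2}$ are mutually non-prefixes. But this does \emph{not} show that either one lies in $Y(\lambda)$: two incomparable elements of $Z(\lambda)$ can both be prefixes of a single $y\in Y(\lambda)$, and then $|Y(\lambda)|$ could still equal $1$. The condition ``$D_i\in\mathcal{E}^{(\lambda)}$'' means $w_{D_i}$ is maximal in \emph{all} of $Z(\lambda)$, not just maximal among $\{w_{D_1},w_{D_2}\}$. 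So the check you describe is necessary but not sufficient, and Corollary~\ref{lemmaSpecial} does not bridge the gap (it tells you about special diagrams $E$ with $t^Ew_D$ standard for $D$ \emph{already known} to be in $\mathcal{E}^{(\lambda)}$).

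What the paper actually does is exactly the missing step: it takes $F_1\in\mathcal{D}^{(\lambda,\lambda')}$ and $F_2\in\mathcal{D}^{(\lambda,\dot{\lambda'})}$ (both special), assumes for contradiction that $w_{F_1}$ and $w_{F_2}$ are \emph{both} prefixes of a single $y=w_G\in Y(\lambda)$, and then exploits the admissibility of $G$ via Result~\ref{res:6a}. Concretely, the standardness of $t^Gw_{F_1}$ and $t^Gw_{F_2}$ forces inequalities among the column indices of nodes of $G$ on three well-chosen rows $a<b<c$ (coming from a pattern $\lambda_b<\lambda_a\LEQ\lambda_c$ or $\lambda_c\LEQ\lambda_a<\lambda_b$), and these inequalities are shown to be incompatible with the existence of a maximal-length $k$-path in $G$ for a suitable $k$, contradicting admissibility. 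This ``no common upper bound in $Z(\lambda)$'' argument is the crux, and it genuinely uses the $k$-path structure (Result~\ref{res:6a}); your proposal does not invoke that tool and, as written, cannot reach the conclusion. Your fallback plan (reduce to small cases and bootstrap via Proposition~\ref{prop:3.2a}) is also problematic, since that proposition only handles appending a part equal to $1$ and gives no mechanism for building a general composition from a three-part one.
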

\begin{proof}
The `if' part follows from Remarks~\ref{rem:2.2} and~\ref{res:10a}. 
For the `only if' part, let $\lambda=(\lambda_1,\dots,\lambda_r)$
and suppose that neither $\lambda$ nor $\dot\lambda$ is a partition.
In view of Remark~\ref{res:10a}, we may replace $\lambda$ by
$\dot\lambda$ if necessary and assume that either
(i)~$\lambda_b<\lambda_a\LEQ\lambda_c$ or
(ii)~$\lambda_c\LEQ\lambda_a<\lambda_b$
for some $a$, $b$ and $c$ with
$1\LEQ a<b<c\LEQ r$.
Let $l$ be the number of parts of $\lambda'$ and, for convenience,
let $\mu=\lambda'$, $d=\lambda_a$, $e=\lambda_b$, and
$f=\lambda_c$.
Let $F_1$ be the unique diagram in $\mathcal{D}^{(\lambda,\mu)}$, and
let $F_2$ be the unique diagram in $\mathcal{D}^{(\lambda,\dot\mu)}$.
By Result~\ref{res:8a}(ii), $w_{F_1},w_{F_2}\in Z(\lambda)$ since $F_1$, $F_2$ (which belong to $\mathcal D^{(\lambda)}$) are special diagrams.
\par
Suppose now that $w_{F_1}$ and $w_{F_2}$ are prefixes of a single
$y\in Y(\lambda)$.
Then $y=w_G$, where $G\in\mathcal{D}^{(\lambda)}$ is admissible (see Result~\ref{res:8a}(ii)).
By Result~\ref{res:3a}, $t^Gw_{F_1}$ and $t^Gw_{F_2}$ are standard $G$-tableaux.
\par
Let
$\{(a,p_1),\dots,(a,p_d)\}$, $\{(b,q_1),\dots,(b,q_e)\}$,
$\{(c,r_1),\dots,(c,r_f)\}$ be the sets of nodes of $G$ on its
$a$-th row, $b$-th row, and $c$-th row, respectively,
where $p_1<\dots<p_d$, $q_1<\dots<q_e$, and $r_1<\dots<r_f$.
%
\par
\emph{Case (i):} $e<d\LEQ f$.
\par
Since $(b,e)t_{F_1}<(a,e+1)t_{F_1}$, we get
$((b,e)\theta_{F_1,G})t^Gw_{F_1}<((a,e+1)\theta_{F_1,G})t^Gw_{F_1}$.
Hence $(b,q_e)t^Gw_{F_1}<(a,p_{e+1})t^Gw_{F_1}$.
As $t^Gw_{F_1}$ is standard, $q_e<p_{e+1}$.
\par
Similarly,
$(c,l-e)t_{F_2}<(b,l-e+1)t_{F_2}$.
So,
$((c,l-e)\theta_{F_2,G})t^Gw_{F_2}<((b,l-e+1)\theta_{F_2,G})t^Gw_{F_2}$.
Hence
$(c,r_{f-e})t^Gw_{F_2}<(b,q_1)t^Gw_{F_2}$.
As $t^Gw_{F_2}$ is standard, $r_{f-e}<q_1$.
\par
Since $G$ has subsequence type $\lambda'$ and $e+1\LEQ l$,
there exists an $(e+1)$-path $\Pi$ in $G$ of length $\lambda_1'+\ldots+\lambda_{e+1}'$.
By Result~\ref{res:6a}, $\Pi$ contains exactly $e+1$ nodes on each
of the $a$-th and $c$-th rows of $G$ and contains all $e$ nodes on
the $b$-th row of $G$.
\par
Let $\Pi'$ be the $e$-subpath of $\Pi$ containing all the nodes on the
$b$-th row of $G$.
The remaining path $\pi$ of $\Pi$ necessarily contains one node on
each of the $a$-th and $c$-th rows.
Since $q_e<p_{e+1}$, the nodes of $\Pi'$ on the $a$-th row are in
columns $\LEQ q_e$.
Hence they are the nodes $(a,p_i)$ with $1\LEQ i\LEQ e$.
So, the node of $\pi$ on the $a$-th row is $(a,p_{i'})$ for some
$i'\GEQ e+1$.
Since each one of the constituent paths of $\Pi'$ contains precisely one node on each
of rows $b$ and $c$ of $G$ and $\Pi'$ contains all nodes on row $b$ of $G$, the nodes of $\Pi'$ on the $c$-th row of $G$ are in
columns $j\GEQ q_1$.
Hence they are the nodes $(c,r_j)$ with $f-e+1\LEQ j\LEQ f$ in view of the
fact that $r_{f-e}<q_{1}$.
So, the node of $\pi$ on the $c$-th row is $(c,r_{j'})$ for some
$j'\LEQ f-e$.
Since $r_{j'}\LEQ r_{f-e}<q_{1}\LEQ q_e<p_{e+1}\LEQ p_{i'}$,
this is impossible.
\par\vspace{1ex}
\emph{Case (ii):}
$f\LEQ d<e$. 
\par
Since $(c,f)t_{F_1}<(b,f+1)t_{F_1}$, we get
$((c,f)\theta_{F_1,G})t^Gw_{F_1}<((b,f+1)\theta_{F_1,G})t^Gw_{F_1}$.
Hence $(c,r_f)t^Gw_{F_1}<(b,q_{f+1})t^Gw_{F_1}$.
Since $t^Gw_{F_1}$ is standard, $r_f< q_{f+1}$.
\par
Similarly,
$(b,l\!-\!d)t_{F_2}<(a,l\!-\!d\!+\!1)t_{F_2}$.
So,
$((b,l\!-\!d)\theta_{F_2,G})t^Gw_{F_2}<((a,l\!-\!d\!+\!1)\theta_{F_2,G})t^Gw_{F_2}$.
Hence
$(b,q_{e-d})t^Gw_{F_2}<(a,p_1)t^Gw_{F_2}$.
As $t^Gw_{F_2}$ is standard, $q_{e-d}<p_1$.
\par
Next we let $\tilde\Pi$ be an $f$-path in $G$ of length
$\lambda_1'+\ldots+\lambda_f'$.
By Result~\ref{res:6a}, $\tilde\Pi$ must contain precisely $f$ nodes
on each one of rows $a$, $b$, $c$ of $G$; in particular it
contains all nodes on row $c$ of $G$.
It follows that each of the constituent paths of $\tilde\Pi$ contains
precisely one node on each one of rows $a$, $b$ and $c$ of $G$.
\par
Since $r_f<q_{f+1}$, the nodes $(b,q_j)$ for $j\GEQ f+1$ cannot belong
to $\tilde\Pi$.
So the nodes in row $b$ of $G$ which belong to $\tilde\Pi$ are
precisely the nodes $(b,q_j)$, $1\LEQ j\LEQ f$.
Now let $p_{j_0}$ be the minimum of the column indices of the nodes
in $\tilde\Pi$ that belong to row $a$ of $G$.
We then have, $q_1\LEQ q_{e-d}<p_1\LEQ p_{j_0}$.
It follows that node $(b,q_1)$ does not belong to $\tilde\Pi$,
giving the desired contradiction.
\par
As neither of the cases is possible, the assumption that $w_{F_1}$
and $w_{F_2}$ are prefixes of a single element of $Y(\lambda)$ is
false.
So $|Y(\lambda)|\GEQ 2$.
\end{proof}

\end{section}

\begin{section}{Ordered $k$-paths}\label{sec:OrdPaths}

In this section we introduce the notion of an ordered $k$-path.
This will play a key role in determining the rim for certain families of Kazhdan-Lusztig cells (see Section~\ref{sec:3a}).

We first consider a relation between non-empty subsets of a diagram $D$.

\begin{definition}
\label{def:2.5a}
Let $D$ be a diagram and let $D_1$, $D_2$ be non-empty
subsets of $D$.
We write ``$D_1\prec D_2$'' if whenever $(a_1,b_1)$ and
$(a_2,b_2)$ are nodes of $D_1$ and $D_2$ respectively with
$a_1\le a_2$, then $b_1<b_2$; otherwise we write ``$D_1\not\prec D_2$''.
Note that $D_1\not\prec D_2$ if $D_1\cap D_2\ne\varnothing$.
In particular, $E\not\prec E$ for any non-empty subset $E$ of $D$.
\end{definition}
\begin{example}
\label{ex:2.7a}
In this picture of a non-empty subset $E$ of a diagram $D$, only the nodes of $D$ which belong to $E$ are included and they are represented by
small black discs.
The  distance between the two nodes of lower column index in the second row of $E$  is one unit.
\begin{center}
{\setlength{\unitlength}{1mm}
\begin{picture}(57,60)(0,-50)
%
%
{\color{blue}
\put(3,12){\line(0,-1){50}}
\put(4,12){\line(0,-1){50}}
\put(5,12){\line(0,-1){50}}
\put(6,-13){\line(0,-1){25}}
\put(7,-13){\line(0,-1){25}}
\put(8,-13){\line(0,-1){25}}
\put(9,-13){\line(0,-1){25}}
\put(10,-17){\line(0,-1){21}}
\put(11,-17){\line(0,-1){21}}
\put(12,-17){\line(0,-1){21}}
\put(13,-17){\line(0,-1){21}}
\put(14,-29){\line(0,-1){9}}
\put(15,-29){\line(0,-1){9}}
\put(16,-29){\line(0,-1){9}}
\put(17,-29){\line(0,-1){9}}
\put(18,-29){\line(0,-1){9}}
\put(19,-29){\line(0,-1){9}}
\put(20,-29){\line(0,-1){9}}
\put(21,-29){\line(0,-1){9}}
\put(22,-33){\line(0,-1){5}}
\put(23,-33){\line(0,-1){5}}
\put(24,-33){\line(0,-1){5}}
\put(25,-33){\line(0,-1){5}}
\put(26,-33){\line(0,-1){5}}
\put(27,-33){\line(0,-1){5}}
\put(28,-33){\line(0,-1){5}}
\put(29,-33){\line(0,-1){5}}
\put(30,-33){\line(0,-1){5}}
\put(31,-33){\line(0,-1){5}}
\put(32,-33){\line(0,-1){5}}
\put(33,-33){\line(0,-1){5}}
\put(34,-33){\line(0,-1){5}}
\put(35,-33){\line(0,-1){5}}
\put(36,-33){\line(0,-1){5}}
\put(37,-33){\line(0,-1){5}}
\put(38,-33){\line(0,-1){5}}
\put(39,-33){\line(0,-1){5}}
\put(40,-33){\line(0,-1){5}}
\put(41,-33){\line(0,-1){5}}
\put(42,-33){\line(0,-1){5}}
\put(43,-33){\line(0,-1){5}}
\put(44,-33){\line(0,-1){5}}
\put(45,-33){\line(0,-1){5}}
\put(46,-33){\line(0,-1){5}}
\put(47,-33){\line(0,-1){5}}
\put(48,-33){\line(0,-1){5}}
\put(49,-33){\line(0,-1){5}}
\put(50,-33){\line(0,-1){5}}
\put(51,-33){\line(0,-1){5}}
\put(52,-33){\line(0,-1){5}}
\put(53,-33){\line(0,-1){5}}
\put(55,-33){\line(0,-1){5}}
\put(56,-33){\line(0,-1){5}}
\put(57,-33){\line(0,-1){5}}
\put(58,-33){\line(0,-1){5}}
\put(59,-33){\line(0,-1){5}}
\put(60,-33){\line(0,-1){5}}
\put(61,-33){\line(0,-1){5}}
\put(62,-33){\line(0,-1){5}}
\put(63,-33){\line(0,-1){5}}
\put(64,-33){\line(0,-1){5}}
\put(65,-33){\line(0,-1){5}}
\put(66,-33){\line(0,-1){5}}
\put(67,-33){\line(0,-1){5}}
\put(68,-33){\line(0,-1){5}}
\put(69,-33){\line(0,-1){5}}
 \multiput(6.5,12)(0,-1){12}{\thicklines\line(0,-1){0.25}}
 \multiput(6.5,0)(0,-1){12}{\thicklines\line(0,-1){0.25}}
 \multiput(6.5,-12)(1,0){4}{\thicklines\line(1,0){0.25}}
 \multiput(10.5,-12)(0,-1){4}{\thicklines\line(0,-1){0.25}}
 \multiput(10.5,-16)(1,0){4}{\thicklines\line(1,0){0.25}}
 \multiput(14.5,-16)(0,-1){12}{\thicklines\line(0,-1){0.25}}
 \multiput(14.5,-28)(1,0){8}{\thicklines\line(1,0){0.25}}
 \multiput(22.5,-28)(0,-1){4}{\thicklines\line(0,-1){0.25}}
 \multiput(22.5,-32)(1,0){48}{\thicklines\line(1,0){0.25}}
}
%
%
{\color{red}
\put(-7,5){\line(1,0){70}}
\put(-7,4){\line(1,0){70}}
\put(-7,3){\line(1,0){70}}
\put(-7,2){\line(1,0){70}}
\put(-7,1){\line(1,0){70}}
\put(42,0){\line(1,0){21}}
\put(42,-1){\line(1,0){21}}
\put(42,-2){\line(1,0){21}}
\put(42,-3){\line(1,0){21}}
\put(42,-4){\line(1,0){21}}
\put(42,-5){\line(1,0){21}}
\put(42,-6){\line(1,0){21}}
\put(42,-7){\line(1,0){21}}
\put(42,-8){\line(1,0){21}}
\put(42,-9){\line(1,0){21}}
\put(42,-10){\line(1,0){21}}
\put(42,-11){\line(1,0){21}}
\put(46,-12){\line(1,0){17}}
\put(46,-13){\line(1,0){17}}
\put(46,-14){\line(1,0){17}}
\put(46,-15){\line(1,0){17}}
\put(46,-16){\line(1,0){17}}
\put(46,-17){\line(1,0){17}}
\put(46,-18){\line(1,0){17}}
\put(46,-19){\line(1,0){17}}
\put(46,-20){\line(1,0){17}}
\put(46,-21){\line(1,0){17}}
\put(46,-22){\line(1,0){17}}
\put(46,-23){\line(1,0){17}}
\put(54,-24){\line(1,0){9}}
\put(54,-25){\line(1,0){9}}
\put(54,-26){\line(1,0){9}}
\put(54,-27){\line(1,0){9}}
\put(54,-28){\line(1,0){9}}
\put(54,-29){\line(1,0){9}}
\put(54,-30){\line(1,0){9}}
\put(54,-31){\line(1,0){9}}
\put(54,-33){\line(1,0){9}}
\put(54,-34){\line(1,0){9}}
\put(54,-35){\line(1,0){9}}
\put(54,-36){\line(1,0){9}}
\put(54,-37){\line(1,0){9}}
\put(54,-38){\line(1,0){9}}
\put(54,-39){\line(1,0){9}}
\put(54,-40){\line(1,0){9}}
 \multiput(-7,0)(1,0){48}{\thicklines\line(1,0){0.75}}
 \multiput(41,0)(0,-1){12}{\thicklines\line(0,-1){0.75}}
 \multiput(41,-12)(1,0){4}{\thicklines\line(1,0){0.75}}
 \multiput(45,-12)(0,-1){12}{\thicklines\line(0,-1){0.75}}
 \multiput(45,-24)(1,0){8}{\thicklines\line(1,0){0.75}}
 \multiput(53,-24)(0,-1){8}{\thicklines\line(0,-1){0.75}}
 \multiput(53,-32)(0,-1){12}{\thicklines\line(0,-1){0.75}}
}
%
%
\nodeB{9}{0}{4}
\nodeB{21}{0}{4}
\nodeB{29}{0}{4}
\nodeB{37}{0}{4}

\nodeB{13}{-4}{4}
\nodeB{17}{-4}{4}
\nodeB{33}{-4}{4}

\nodeB{5}{-8}{4}
\nodeB{33}{-8}{4}

\nodeB{9}{-12}{4}
\nodeB{29}{-12}{4}
\nodeB{41}{-12}{4}

\nodeB{13}{-16}{4}
\nodeB{17}{-16}{4}
\nodeB{29}{-16}{4}
\nodeB{41}{-16}{4}

\nodeB{13}{-24}{4}
\nodeB{17}{-24}{4}
\nodeB{21}{-24}{4}
\nodeB{49}{-24}{4}

\nodeB{21}{-28}{4}
\nodeB{41}{-28}{4}
\put(20,-44){\mbox{Subset $E$}}
\end{picture}
}
\end{center}
An arbitrary node $(a,b)$ of $D$ satisfies $\{(a,b)\}\prec E$ if, and only
if, $(a,b)$ is to the left and below the dotted line
\mbox{\setlength{\unitlength}{1mm}
(\begin{picture}(11,0)(0,-1)
\color{blue}
\multiput(0,0)(1,0){12}{\thicklines\line(1,0){0.25}}
\end{picture}),}
which partitions $\mathbb{Z}^2$ into two infinite regions.
Similarly, an arbitrary node $(a,b)$ of $D$ satisfies $E\prec \{(a,b)\}$
if, and only if, $(a,b)$ is to the right and above the dashed line
\mbox{\setlength{\unitlength}{1mm}
(\begin{picture}(11,0)(0,-1)
\color{red}
\multiput(0,0)(1,0){12}{\thicklines\line(1,0){0.75}}
\end{picture}),}
which partitions $\mathbb{Z}^2$ into two infinite regions.
\par
We make this more precise in the following definitions and lemma.
\end{example}

\begin{definition}
\label{def:2.8a}
Let $E$ be a non-empty subset of a diagram $D$.
Let $c_D=\sup\{b\in\mathbb{N}\colon (a,b)\in D\}$ and, for each $n\in\mathbb N$, let $E(\LEQ n)=\{b\in\mathbb{N}\colon (a,b)\in E\mbox{ for some }a\LEQ n\}$ and $E(\GEQ n)=\{b\in\mathbb{N}\colon (a,b)\in E\mbox{ for some }a\GEQ n\}$.
Define $\gamma_E(n)=\sup E(\LEQ n)$ if $E(\LEQ n)\ne\varnothing$ (resp., $\gamma_E(n)=0$ if $E(\LEQ n)=\varnothing$) and $\delta_E(n)=\inf E(\GEQ n)$ if $E(\GEQ n)\ne\varnothing$ (resp., $\delta_E(n)=1+c_D$ if $E(\GEQ n)=\varnothing$).
We also define the \emph{right side} $\mathcal{R}(E)$ and the \emph{left side}
$\mathcal{L}(E)$ of $E$ to be the sets
$\mathcal{R}(E)=\{(n,m)\in\mathbb{N}^2\colon m>\gamma_E(n)\}$
and
$\mathcal{L}(E)=\{(n,m)\in\mathbb{N}^2\colon m<\delta_E(n)\}$.
\end{definition}
\begin{lemma}
\label{lem:2.9a}
If $D_1$ and $D_2$ are non-empty subsets of a diagram $D$,
then the following statements are equivalent:
\[
\begin{tabular}{cccccccccc}
(i) \ $D_1\subseteq\mathcal{L}(D_2)$, &
(ii) \ $D_2\subseteq\mathcal{R}(D_1)$, &
(iii) \ $D_1\prec D_2$.
\end{tabular}
\]
\end{lemma}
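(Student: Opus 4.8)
The plan is to prove the two equivalences (iii)$\Leftrightarrow$(i) and (iii)$\Leftrightarrow$(ii) separately, since each of the containment conditions, once unwound, becomes literally the same node-by-node quantified statement as $D_1\prec D_2$. The whole proof is really just a matter of peeling the definitions apart carefully, with attention to the boundary conventions for $\gamma_E$ and $\delta_E$.

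First I would treat (iii)$\Leftrightarrow$(i). By definition $D_1\subseteq\mathcal{L}(D_2)$ holds exactly when $b_1<\delta_{D_2}(a_1)$ for every node $(a_1,b_1)\in D_1$, so I fix such a node and split into two cases. If $D_2(\GEQ a_1)=\varnothing$, then $\delta_{D_2}(a_1)=1+c_D$; since $D$ is finite, $(a_1,b_1)\in D$ forces $b_1\LEQ c_D$, so $b_1<\delta_{D_2}(a_1)$ automatically, and on the $\prec$ side there is no node $(a_2,b_2)\in D_2$ with $a_2\GEQ a_1$, so the defining condition of $\prec$ imposes nothing on $(a_1,b_1)$ — the two sides agree. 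If $D_2(\GEQ a_1)\ne\varnothing$, then it is a non-empty subset of $\{1,\dots,c_D\}$ and hence has a least element, so $\delta_{D_2}(a_1)=\min D_2(\GEQ a_1)$; consequently $b_1<\delta_{D_2}(a_1)$ is equivalent to $b_1<b_2$ for every $b_2\in D_2(\GEQ a_1)$, i.e.\ to $b_1<b_2$ for every $(a_2,b_2)\in D_2$ with $a_2\GEQ a_1$. Quantifying over all $(a_1,b_1)\in D_1$ then gives precisely the defining condition of $D_1\prec D_2$.

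The argument for (iii)$\Leftrightarrow$(ii) is the mirror image: $D_2\subseteq\mathcal{R}(D_1)$ means $b_2>\gamma_{D_1}(a_2)$ for every $(a_2,b_2)\in D_2$; if $D_1(\LEQ a_2)=\varnothing$ then $\gamma_{D_1}(a_2)=0<b_2$ automatically (all column indices are positive) and the $\prec$ side is again vacuous for this node, while if $D_1(\LEQ a_2)\ne\varnothing$ it is a finite non-empty set of positive integers, so $\gamma_{D_1}(a_2)=\max D_1(\LEQ a_2)$ and $b_2>\gamma_{D_1}(a_2)$ becomes $b_1<b_2$ for all $(a_1,b_1)\in D_1$ with $a_1\LEQ a_2$; quantifying over $(a_2,b_2)\in D_2$ recovers $D_1\prec D_2$ once more. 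The only genuinely delicate point — the closest thing to an obstacle — is checking that the conventional values $\gamma_E(n)=0$ and $\delta_E(n)=1+c_D$ in the empty cases line up with the vacuous instances of $\prec$; this is where one uses that $D$ is finite (so the relevant $\sup$ and $\inf$ are attained, and every node has column index $\LEQ c_D$) and that indices start at $1$. With those checks in place, the three statements are seen to be equivalent.
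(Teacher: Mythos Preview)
Your proof is correct. The paper organizes the argument as a cycle (i)$\Rightarrow$(ii)$\Rightarrow$(iii)$\Rightarrow$(i) rather than your two separate biconditionals (iii)$\Leftrightarrow$(i) and (iii)$\Leftrightarrow$(ii), but in both cases the content is pure definition-unwinding and the underlying reasoning is the same; your version is in fact more careful than the paper's in explicitly treating the boundary cases $D_2(\GEQ a_1)=\varnothing$ and $D_1(\LEQ a_2)=\varnothing$, which the paper passes over silently.
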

\begin{proof}
(i)$\Rightarrow$(ii):
From (i), if $(a,b)\in D_1$ then $(a,b)\in\mathcal{L}(D_2)$.
So $b<\delta_{D_2}(a)$.
That is, $b<d$ if $(c,d)\in D_2$ and $a\leq c$.
\par
Hence, given $(c,d)\in D_2$, $d>b$ for all nodes $(a,b)\in D_1$ with
$a\leq c$.
That is, $d>\gamma_{D_1}(c)$.
So, $(c,d)\in\mathcal{R}(D_1)$.
Thus, $D_2\subseteq\mathcal{R}(D_1)$.
\par
(ii)$\Rightarrow$(iii):
From (ii), if $(c,d)\in D_2$ then $(c,d)\in\mathcal{R}(D_1)$.
So $d>\gamma_{D_1}(c)$.
That is, $b<d$ if $(a,b)\in D_1$ and $a\leq c$.
Hence, if $(a,b)\in D_1$ and $(c,d)\in D_2$ and $a\leq c$ then $b<d$.
That is, $D_1\prec D_2$.
\par
(iii)$\Rightarrow$(i):
Let $(a,b)\in D_1$.
If $(c,d)\in D_2$ with $a\leq c$, then by (iii), $b<d$.
Hence, $b<\delta_{D_2}(a)$.
So, $(a,b)\in \mathcal{L}(D_2)$.
Hence, $D_1\subseteq\mathcal{L}(D_2)$.
\end{proof}
\begin{remark}
\label{rem:2.10a}
We collect some immediate observations  regarding the relation $\prec$.
Let $D$ be a diagram and let $D_1$, $D_2$, $D_3$ be
non-empty subsets of $D$.
\par
\begin{tabular}{rp{5.5in}}
(i) & If $D_1'$ and $D_2'$ are non-empty subsets of $D_1$ and $D_2$, respectively, and $D_1\prec D_2$
 then $D_1'\prec D_2'$.
\\[1ex]
(ii) &
If $D_1\prec D_3$ and $D_2\prec D_3$, then $(D_1\cup D_2)\prec D_3$.
\\[1ex]
(iii) & If $D_1\prec D_2$ and $D_2\prec D_3$, it is not true in
general that $D_1\prec D_3$.
For example, take $D_1=\{(2,3)\}$, $D_2=\{(1,1)\}$ and $D_3=\{(3,2)\}$.
\\[1ex]
(iv) &
It is possible for both $D_1\prec D_2$ and $D_2\prec D_1$ to be true.
For example, take $D_1=\{(1,1)\}$ and $D_2=\{(2,2)\}$.
\end{tabular}
\end{remark}

We now give the definition of an ordered $k$-path in a diagram $D$.

\begin{definition}
\label{rem:2.6a}
Let $\Pi=(\pi_1,\ldots,\pi_k)$ be a $k$-path in a diagram $D$.
We say that $\Pi$ is \emph{ordered} if $s(\pi_i)\prec s(\pi_j)$ whenever $i<j$ ($1\LEQ i,j\LEQ k$).
It is then immediate that $\Pi$ is ordered if, and only if, $\bigcup_{i=1}^{j-1}s(\pi_i)\prec s(\pi_j)$ for
$2\le j\le k$.
\end{definition}

\begin{remark}
\label{rem:2.4a}
Keeping the setup and notation  of Proposition~\ref{prop:2.3a},
it does not follow, in general, that if $\Pi$ is an ordered $k$-path
then $\Pi\theta$ is an ordered $k$-path.
Consider for example
\[
\newcommand{\clrr}[1]{#1}
\newcommand{\clrg}[1]{#1}
\newcommand{\clrm}[1]{#1}
\newcommand{\clrb}[1]{#1}
\begin{array}{ccc}
t^Ew_D =
\begin{array}{rrrrrr}
\clrb{1} & \clrr{4} \\
\clrb{2} &   &   & \clrg{5} & \clrm{8} \\
\clrb{3} &   & \clrr{7} &   & \clrg{9} & \clrm{10} \\
  & \clrb{6} \\
\end{array}
& &
t_D =
\begin{array}{rrrrrr}
\clrb{1} & \clrr{4} \\
\clrb{2} & \clrg{5} &   & \clrm{8} \\
\clrb{3} &   & \clrr{7} & \clrg{9} & \clrm{10} \\
  & \clrb{6} \\
\end{array}
\end{array}
\]
The partition $\{\{1,2,3,6\},\{4,7\},\{5,9\},\{8,10\}\}$ of the
entries in $t^Ew_D$ gives an ordered $4$-path in $E$,
while the corresponding $4$-path in $D$ is not ordered.
\end{remark}

However, we can prove the following lemma.
We will need a definition first.

\begin{definition}\label{dBasic}
Let $\lambda\vDash n$ and let $D\in \mathcal D^{(\lambda)}$.
Suppose that $D=s(\Pi)$ for some ordered $k$-path $\Pi$ with $\Pi=(\pi_1,\ldots,\pi_k)$.
We denote by $D(\Pi)$ the diagram in $\mathcal D^{(\lambda)}$  constructed from $D$ by replacing each node of $\pi_j$ by a node on the same row but in the column $j$, for $j=1,\ldots, k$.
Observe that carrying out the same operation on the tableau $t_D$ results in the $D(\Pi)$-tableau $t^{D(\Pi)}w_D$.
\end{definition}

\begin{lemma}\label{lBasic}
Let $\lambda\vDash n$ and let $D\in\mathcal D^{(\lambda)}$.
Suppose that $D=s(\Pi)$ for some ordered $k$-path $\Pi$.
Then $t^{D(\Pi)}w_D$ is a standard $D(\Pi)$-tableau.
\end{lemma}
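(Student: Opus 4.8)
The plan is to show directly that the $D(\Pi)$-tableau $t^{D(\Pi)}w_D$, which by Definition~\ref{dBasic} is obtained from $t_D$ by pushing each node of the constituent path $\pi_j$ into column $j$, is standard; that is, for nodes $(i',j'),(i'',j'')$ of $D(\Pi)$ with $i'\LEQ i''$ and $j'\LEQ j''$, the $(i',j')$-entry does not exceed the $(i'',j'')$-entry. Since $t_D$ is itself standard and the entries are merely relocated, the natural approach is to compare two entries $p<q$ in $t_D$ and show that in the new tableau the node carrying $p$ is weakly north-west of the node carrying $q$, unless they already were comparable. First I would set up notation: say entry $p$ sits on row $a$ in $D$ and lies on the constituent path $\pi_g$ (so it moves to column $g$), and entry $q$ sits on row $b$ and lies on $\pi_h$ (so it moves to column $h$). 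The rows are unchanged, so I must show that $p<q$ together with $a\LEQ b$ (the only case that could create a violation in the new tableau) forces $g\LEQ h$.

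\textbf{Key steps.}
The first case is $g=h$: then $p$ and $q$ lie on the same constituent path $\pi_g$, which by Definition~\ref{def:2.1a}(i) is strictly increasing in row index, so the node of $p$ and the node of $q$ are automatically in distinct rows of a path that respects the row order; combined with the fact that $t_D$ restricted to $s(\pi_g)$ is increasing along the path (because $t_D$ is standard and a path moves weakly south-east), the smaller entry $p$ sits on the earlier, hence higher, row, so $a<b$; since both go to column $g$, the new positions are comparable. The substantive case is $g\ne h$. Because $\Pi$ is ordered, Definition~\ref{rem:2.6a} gives either $s(\pi_g)\prec s(\pi_h)$ (if $g<h$) or $s(\pi_h)\prec s(\pi_g)$ (if $h<g$). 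Suppose for contradiction that $g>h$, i.e.\ $s(\pi_h)\prec s(\pi_g)$. The node of $p$ lies in $s(\pi_g)$ on row $a$; the node of $q$ lies in $s(\pi_h)$ on row $b$ with $a\LEQ b$. By the definition of $\prec$ applied with the $\pi_h$-node on the (weakly higher) row $b$... — here I must be careful about which of $a,b$ is larger and orient the $\prec$ relation correctly. The cleanest route: since $a \LEQ b$ and $s(\pi_h)\prec s(\pi_g)$ means ``$\pi_h$-nodes on a row $\LEQ$ the row of a $\pi_g$-node have strictly smaller column'', I want the $\pi_g$-node to be on the higher or equal row, which is not guaranteed by $a\LEQ b$. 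So instead I compare the original column indices in $D$: I will show $p<q$ and $a\LEQ b$ already force, via standardness of $t_D$, that the original column of $p$ is $\LEQ$ original column of $q$; then using $s(\pi_h)\prec s(\pi_g)$ with the $\pi_h$-node $q$ and a suitably chosen $\pi_g$-node on row $\LEQ b$ I would derive a contradiction with that column inequality. Concretely, if row $a$ carries the $\pi_g$-node of $p$ and $a\LEQ b$, then $\{$node of $p\}\subseteq s(\pi_g)$ and the node of $q$ is in $s(\pi_h)$ on row $b\GEQ a$, so by $s(\pi_h)\prec s(\pi_g)$ read in the direction ``lower-or-equal row of the $\pi_h$-set forces strictly smaller column than the $\pi_g$-set'' we get (column of $q$) $<$ (column of $p$) in $D$ — contradicting the standardness consequence (column of $p$) $\LEQ$ (column of $q$). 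Hence $g\LEQ h$, which is exactly what standardness of the new tableau requires for the pair $p<q$ with $a\LEQ b$. Finally I would note the remaining pairs ($i'\LEQ i''$, $j'\LEQ j''$ but with the larger entry possibly north-west) are handled symmetrically, so $t^{D(\Pi)}w_D$ is standard.

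\textbf{Main obstacle.}
The delicate point is the bookkeeping in the case $g\ne h$: correctly matching the direction of the relation $s(\pi_i)\prec s(\pi_j)$ (which is asymmetric and, by Remark~\ref{rem:2.10a}(iii)(iv), neither transitive nor symmetric) against the row ordering $a\LEQ b$, and extracting from standardness of $t_D$ the right inequality between the \emph{original} column indices of $p$ and $q$ to feed into $\prec$. I expect that isolating the correct ``comparison node'' on the appropriate path and invoking Definition~\ref{def:2.5a} with the nodes taken in the right order is the crux; once that is set up properly the contradiction is immediate and the rest is the routine check that relocating entries within rows preserves the row indices and hence the remaining standardness inequalities.
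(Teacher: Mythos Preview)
Your overall strategy is the same as the paper's---use the ordering of $\Pi$ together with the column-filling structure of $t_D$---but the central implication you set out to prove is stated with the wrong inequality, and as written it is false.

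You claim that ``$p<q$ together with $a\LEQ b$ forces $g\LEQ h$'' and call $a\LEQ b$ ``the only case that could create a violation''. But a violation of standardness with $p<q$ means the position $(b,h)$ of $q$ lies weakly north-west of the position $(a,g)$ of $p$, i.e.\ $b\LEQ a$ and $h\LEQ g$. So the dangerous case is $b\LEQ a$, not $a\LEQ b$. With your hypothesis $a\LEQ b$ the conclusion $g\LEQ h$ simply fails: take $D=\{(1,1),(2,2)\}$ and the ordered $2$-path $\Pi=(\pi_1,\pi_2)$ with $s(\pi_1)=\{(2,2)\}$, $s(\pi_2)=\{(1,1)\}$ (this is ordered since the $\prec$ condition is vacuous). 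Then $p=1$ has $a=1$, $g=2$ and $q=2$ has $b=2$, $h=1$; so $p<q$, $a\LEQ b$, yet $g>h$. The resulting tableau $t^{D(\Pi)}w_D$ is still standard because $(1,2)$ and $(2,1)$ are incomparable, but your implication is violated. This is exactly why your attempt to apply $s(\pi_h)\prec s(\pi_g)$ kept pointing the wrong way: with $a\LEQ b$ you have the $\pi_g$-node on the \emph{higher} row, and Definition~\ref{def:2.5a} gives you nothing.

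The fix is either to reverse the hypothesis (assume $p<q$ and $b\LEQ a$, then deduce $g<h$; now $s(\pi_h)\prec s(\pi_g)$ applies directly with the $\pi_h$-node on row $b\LEQ a$ and yields $c_q<c_p$, contradicting $c_p\LEQ c_q$ from the column-filling of $t_D$), or---more cleanly, and this is what the paper does---to argue in the forward direction: start with two nodes $(a,j)$ and $(g,j')$ of $D(\Pi)$ satisfying $a\LEQ g$ and $j\LEQ j'$, pull them back to nodes $(a,b)\in s(\pi_j)$ and $(g,h)\in s(\pi_{j'})$ of $D$, and use $s(\pi_j)\prec s(\pi_{j'})$ (for $j<j'$) or the path property (for $j=j'$) together with $a\LEQ g$ to get $b\LEQ h$, whence the $t_D$-entries are correctly ordered. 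Starting from positions in $D(\Pi)$ rather than from entries gives you the column comparison $j\LEQ j'$ as a hypothesis, which is exactly what makes $\prec$ fire in the right direction.
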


\begin{proof}
Let $\Pi=(\pi_1,\ldots,\pi_k)$ and set $E=D(\Pi)$.
Let $(a,j)$ and $(g,j')$ be nodes of $E$ with $a\LEQ g$ and $j\LEQ j'$,
and let $(a,b)$ and $(g,h)$ be the corresponding nodes of $D$.
(Note that $(a,b)=(a,j)\theta^{-1}_{D,E}$ and $(b,h)=(g,j')\theta^{-1}_{D,E}$,  with the bijection $\theta_{D,E}:D\to E$ as defined in Section~\ref{secPathsAdmDiagr}.)
Then $(a,b)$ is on $\pi_j$ and $(g,h)$ is on $\pi_{j'}$.
We claim that $b\LEQ h$.
To see this, we can use the fact that $\pi_j$ is a path for the case $j=j'$, and the fact that the $k$-path $\Pi$ is ordered, and so $s(\pi_j)\prec s(\pi_{j'})$, for the case $j<j'$.
Hence, $(a,j)t^Ew_D=(a,b)t_D\LEQ (g,h)t_D=(g,j')t^Ew_D$.
This shows that  $t^Ew_D$ is a standard $E$-tableau.
\end{proof}

It will be convenient to prove some further elementary results
concerning the relation $\prec$.
\begin{lemma}
\label{lem:2.11a}
Let $\pi$ be a path in a diagram $D$ and let $(a',b')$ be a node of $D$
which is not in $s(\pi)$ and such that $s(\pi)$ does not have two
nodes $(a_1,b')$ and $(a_2,b')$ with $a_1<a'<a_2$.
Let $\pi'$ be the path of length one with $s(\pi')=\{(a',b')\}$.
Then either $s(\pi')\prec s(\pi)$ or $s(\pi)\prec s(\pi')$.
\end{lemma}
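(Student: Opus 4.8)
Write $\pi=((a_i,b_i))_{i=1}^m$ with $a_1<a_2<\cdots<a_m$ and $b_1\LEQ b_2\LEQ\cdots\LEQ b_m$. We are given a node $(a',b')$ with $(a',b')\notin s(\pi)$, and the hypothesis that $s(\pi)$ contains no two nodes in column $b'$ straddling row $a'$. I want to show $\{(a',b')\}\prec s(\pi)$ or $s(\pi)\prec\{(a',b')\}$. The plan is to locate $a'$ relative to the rows $a_1,\dots,a_m$ of $\pi$, split into the three cases $a'<a_1$, $a'>a_m$, and $a_i<a'<a_{i+1}$ for some $i$ (the case $a'=a_i$ is excluded since $(a',b')\notin s(\pi)$ would then force $b'\ne b_i$, and I will handle that sub-possibility too), and in each case show that $b'$ is forced to lie on one side of the relevant column indices of $\pi$, using the monotonicity of the $b_i$ together with the no-straddling hypothesis.

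**The main cases.** First suppose $a'<a_1$. Then for every node $(a_i,b_i)$ of $\pi$ we have $a'<a_i$, so to get $\{(a',b')\}\prec s(\pi)$ I only need $b'<b_i$ for all $i$, i.e. $b'<b_1$; and to get $s(\pi)\prec\{(a',b')\}$ I need nothing from the row condition but only $b_i<b'$ for all $i$ with $a_i\LEQ a'$, which is vacuous, so $s(\pi)\prec\{(a',b')\}$ holds automatically. Wait — more carefully, $s(\pi)\prec\{(a',b')\}$ requires: whenever $(a_i,b_i)$ and $(a',b')$ satisfy $a_i\LEQ a'$, then $b_i<b'$; since $a'<a_1\LEQ a_i$ there is no such $i$, so the implication is vacuously true and $s(\pi)\prec\{(a',b')\}$ holds. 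Symmetrically, if $a'>a_m$ then $\{(a',b')\}\prec s(\pi)$ holds vacuously. The substantive case is $a_i<a'<a_{i+1}$ for some $1\LEQ i\LEQ m-1$ (or, if $a'=a_j$ for some $j$, then $b'\ne b_j$, and the same analysis with the convention $a_i<a'\LEQ a_{i+1}$ type bookkeeping applies — I will fold this in). Here the nodes of $\pi$ weakly above row $a'$ are exactly $(a_1,b_1),\dots,(a_i,b_i)$, and those weakly below are $(a_{i+1},b_{i+1}),\dots,(a_m,b_m)$ (when $a'=a_j$, node $(a_j,b_j)$ must be excluded from consideration or placed on the appropriate side using $b'\ne b_j$). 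To have $\{(a',b')\}\prec s(\pi)$ I need $b'<b_{i+1}$ (hence $b'<b_{i+1}\LEQ\cdots\LEQ b_m$), and there is no constraint from rows $a_1,\dots,a_i$ since those have $a_\ell<a'$, not $a'\LEQ a_\ell$. To have $s(\pi)\prec\{(a',b')\}$ I need $b_i<b'$ (hence $b_1\LEQ\cdots\LEQ b_i<b'$). So the claim reduces to: $b'<b_{i+1}$ or $b_i<b'$.

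**Closing the argument via the no-straddling hypothesis.** Suppose neither holds, i.e. $b'\GEQ b_{i+1}$ and $b'\LEQ b_i$. Combined with $b_i\LEQ b_{i+1}$ this forces $b_i\LEQ b' \LEQ b_i$ wait — we get $b_{i+1}\LEQ b'\LEQ b_i\LEQ b_{i+1}$, so $b'=b_i=b_{i+1}$. But then $(a_i,b')$ and $(a_{i+1},b')$ are both nodes of $s(\pi)$ with $a_i<a'<a_{i+1}$, contradicting the hypothesis. (If instead $a'=a_j$: then $b'=b_j$ would be forced by the same pinch, contradicting $(a',b')\notin s(\pi)$.) Hence at least one of $b'<b_{i+1}$, $b_i<b'$ holds, giving $\{(a',b')\}\prec s(\pi)$ or $s(\pi)\prec\{(a',b')\}$ respectively. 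This completes all cases. I expect the only real subtlety to be the careful bookkeeping of weak versus strict row inequalities in Definition~\ref{def:2.5a} — in particular remembering that $\{(a',b')\}\prec s(\pi)$ only constrains those nodes of $\pi$ weakly below row $a'$, and $s(\pi)\prec\{(a',b')\}$ only constrains those weakly above — and correctly absorbing the sub-case $a'=a_j$ (where $b'\ne b_j$) into the three-way split. Everything else is immediate from monotonicity of $b_1\LEQ\cdots\LEQ b_m$ and the hypothesis; no result beyond Definition~\ref{def:2.5a} is needed, though one could phrase the conclusion through Lemma~\ref{lem:2.9a} if desired.
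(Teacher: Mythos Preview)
Your argument is correct and rests on the same two ingredients as the paper's proof---monotonicity of the column indices along $\pi$ and the no-straddling hypothesis---so the approaches are essentially the same. The only difference is organizational: where you split into cases according to the row position of $(a',b')$ relative to $a_1,\dots,a_m$, the paper argues in one stroke by assuming $s(\pi')\not\prec s(\pi)$, taking a witness $(c,d)\in s(\pi)$ with $a'\LEQ c$ and $d\LEQ b'$, and then for any $(a,b)\in s(\pi)$ with $a\LEQ a'$ deducing $b\LEQ d\LEQ b'$ from the path property (since $a\LEQ c$), with the equality $b=b'$ ruled out exactly by the straddling hypothesis.
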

\begin{proof}
Suppose that $s(\pi')\not\prec s(\pi)$.
Then there is a node $(c,d)\in s(\pi)$ such that $a'\LEQ c$ and
$b'\GEQ d$.
Let $(a,b)\in s(\pi)$ with $a\LEQ a'$.
Since $a\LEQ c$, $b\LEQ d\LEQ b'$.
If $b=b'$ then $b'=d$.
So $a\neq a'$ and $a'\neq c$.
Since $(a',b')$ is between the nodes $(a,b')$ and $(c,b')$ of
$\pi$, this contradicts the hypothesis.
Hence $b<b'$ and we have shown that $s(\pi)\prec s(\pi')$.
\end{proof}
\begin{lemma}
\label{lem:2.12a}
Let $(\pi_1,\pi_2)$ be an ordered 2-path in a diagram $D$ and let
$\pi'$ be a path of length one in $D$ such that
$s(\pi')\cap(s(\pi_1)\cup s(\pi_2))=\varnothing$ and
$s(\pi_1)\not\prec s(\pi')$.
Then $s(\pi')\prec s(\pi_2)$.
\end{lemma}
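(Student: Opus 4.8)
Write $s(\pi')=\{(a',b')\}$. The plan is to argue by contradiction: assume $s(\pi')\not\prec s(\pi_2)$, and combine this with the hypothesis $s(\pi_1)\not\prec s(\pi')$ to contradict the fact that $(\pi_1,\pi_2)$ is ordered, i.e.\ that $s(\pi_1)\prec s(\pi_2)$. Concretely, $s(\pi_1)\not\prec s(\pi')$ gives a node $(a_1,b_1)\in s(\pi_1)$ with $a_1\LEQ a'$ and $b_1\GEQ b'$; while $s(\pi')\not\prec s(\pi_2)$ gives a node $(a_2,b_2)\in s(\pi_2)$ with $a'\LEQ a_2$ and $b'\GEQ b_2$. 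Chaining these, $a_1\LEQ a'\LEQ a_2$ and $b_1\GEQ b'\GEQ b_2$, so $a_1\LEQ a_2$ but $b_1\GEQ b_2$, which directly contradicts $s(\pi_1)\prec s(\pi_2)$ (Definition~\ref{def:2.5a}) — unless one of the inequalities is an equality that forces $b_1=b'=b_2$ with, say, $a_1=a'$ or $a'=a_2$, i.e.\ the node $(a',b')$ coincides in row or column with one of the offending nodes.

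**Handling the degenerate equality cases.** The only way the chain fails to contradict $s(\pi_1)\prec s(\pi_2)$ is if it collapses to $b_1=b'=b_2$. I would rule this out using the disjointness hypothesis $s(\pi')\cap(s(\pi_1)\cup s(\pi_2))=\varnothing$: if $b_1=b'$ then, since $a_1\LEQ a'$ and $(a_1,b')\ne(a',b')$, we have $a_1<a'$; similarly $b'=b_2$ with $a'\LEQ a_2$ and the nodes distinct gives $a'<a_2$. But then $(a_1,b')\in s(\pi_1)$ and $(a_2,b')=(a_2,b_1)\in s(\pi_2)$ both lie in column $b'$ with $a_1<a'<a_2$; now apply $s(\pi_1)\prec s(\pi_2)$ to the pair $(a_1,b')$, $(a_2,b')$: since $a_1\LEQ a_2$ we need $b'<b'$, absurd. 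So the collapsed case cannot occur, and in every remaining case the chain yields $a_1\LEQ a_2$ together with $b_1\GEQ b_2$ (with $b_1>b_2$ whenever $a_1\LEQ a_2$ strictly survives), contradicting $s(\pi_1)\prec s(\pi_2)$.

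**Expected main obstacle.** The argument is essentially a transitivity-type chase, and the only real subtlety — precisely where Remark~\ref{rem:2.10a}(iii) warns that $\prec$ is \emph{not} transitive — is the bookkeeping of the boundary equality cases ($b_1=b'$ or $b'=b_2$). The clean way through is to note that $\prec$ becomes well-behaved once we know the middle node $(a',b')$ does not sit strictly between two same-column nodes of the relevant path; here disjointness plus the fact that $(\pi_1,\pi_2)$ is \emph{ordered} (not merely a $2$-path) supplies exactly that. If one prefers, this can be packaged by invoking Lemma~\ref{lem:2.11a} with $\pi=\pi_1$ (after checking $s(\pi_1)$ has no two nodes $(a_1,b')$, $(a_2,b')$ straddling $a'$, which itself follows from $s(\pi_1)\prec s(\pi_2)$ and the disjointness), concluding $s(\pi_1)\prec s(\pi')$ — contradicting the hypothesis directly. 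Either route works; I would present the direct node-chase since it is self-contained and makes the role of each hypothesis transparent.
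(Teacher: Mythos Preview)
Your approach is exactly the paper's: assume $s(\pi')\not\prec s(\pi_2)$, extract witnesses $(a_1,b_1)\in s(\pi_1)$ and $(a_2,b_2)\in s(\pi_2)$, and chain to get $a_1\LEQ a_2$ with $b_1\GEQ b_2$, contradicting $s(\pi_1)\prec s(\pi_2)$. Your entire ``degenerate equality cases'' paragraph is unnecessary, though: by Definition~\ref{def:2.5a}, $s(\pi_1)\prec s(\pi_2)$ demands the \emph{strict} inequality $b_1<b_2$ whenever $a_1\LEQ a_2$, so $b_1\GEQ b_2$ is already a contradiction with no further case analysis (and the paper's proof, accordingly, never invokes the disjointness hypothesis).
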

\begin{proof}
Suppose that $s(\pi')\not\prec s(\pi_2)$.
Let $s(\pi')=\{(a',b')\}$.
There is a node $(c,d)\in s(\pi_2)$ such that $a'\LEQ c$ and
$b'\GEQ d$.
Also, by hypothesis, there is a node $(a,b)\in s(\pi_1)$ such that
$a\LEQ a'$ and $b\GEQ b'$.
Hence, $a\LEQ c$ and $b\GEQ d$.
However, since $(\pi_1,\pi_2)$ is an ordered 2-path and $a\LEQ c$,
we get $b<d$.
This contradiction establishes the result.
\end{proof}
\par
\begin{remark}
\label{rem:2.13a}
Let $\Pi$ be a path with at least $k$ nodes in a diagram $D$.
Then $\Pi$ is equivalent to an ordered $k$-path $\Pi'$ in $D$.
For example, if $s(\Pi)=\{(a_1,b_1),\ldots,(a_l,b_l)\}$ where
$a_1<\cdots<a_l$ and $k\LEQ l$, let $\Pi'=(\pi_1,\ldots,\pi_k)$ with
$s(\pi_k)=\{(a_1,b_1),\ldots,(a_{l-k+1},b_{l-k+1})\}$ and
$s(\pi_i)=\{(a_{l-i+1},b_{l-i+1})\}$ for $1\LEQ i<k$.
\end{remark}
\begin{theorem}
\label{thm:2.14a}
Let $k\ge1$ and suppose $\Pi$ is a $k$-path in a diagram $D$.
Then $\Pi$ is equivalent to an ordered $k$-path in $D$.
\end{theorem}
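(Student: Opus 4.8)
I want to prove that any $k$-path $\Pi=(\pi_1,\dots,\pi_k)$ in $D$ is equivalent to an \emph{ordered} $k$-path in $D$, i.e.\ one can rearrange the nodes of $s(\Pi)$ into $k$ disjoint paths that are pairwise $\prec$-comparable in a consistent order. The natural approach is induction on $k$, where the base case $k=1$ is vacuous ($E\not\prec E$ is irrelevant since there is only one constituent path). For the inductive step, I would like to "peel off" one constituent path that can serve as the $\prec$-minimal piece, apply the inductive hypothesis to the rest, and then verify that the peeled-off path is indeed $\prec$-below every other path in the resulting ordered $(k-1)$-path.

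\textbf{First} I would set up the induction and, assuming the result for $k-1$, try to identify a good candidate for the smallest path. A promising choice: among all nodes of $s(\Pi)$, look at those in the leftmost occupied column, say column $b_0=\min\{b:(a,b)\in s(\Pi)\}$; or, dually, look row by row at the leftmost node in each row and try to string these together. The cleanest route is probably this: order the rows occupied by $s(\Pi)$ and, greedily from the top, pick in each occupied row the node of smallest column index that is still $\GEQ$ the column chosen on the previous row — this defines a path $\sigma$. The idea is that $\sigma$ consists of "leftmost-reachable" nodes, and one hopes $s(\sigma)\prec s(\Pi')$ where $\Pi'$ is the $(k-1)$-path that the remaining nodes will be rearranged into. \textbf{Then} I would apply the inductive hypothesis to $s(\Pi)\setminus s(\sigma)$ (which is nonempty and still carries a $(k-1)$-path structure, since each $\pi_i$ meets each occupied row at most once and $\sigma$ uses at most one node per row, so deleting $s(\sigma)$ leaves $k-1$ disjoint paths covering the rest — one must check $s(\Pi)\setminus s(\sigma)$ still decomposes into $k-1$ disjoint paths, which follows because the original $\pi_i$ restricted to the complement are still paths, possibly after a small surgery) to get an ordered $(k-1)$-path $\Pi'=(\pi_1',\dots,\pi_{k-1}')$ on those nodes.

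\textbf{The key lemmas to invoke.} The heavy lifting should come from Lemmas~\ref{lem:2.11a} and~\ref{lem:2.12a}. Lemma~\ref{lem:2.11a} says a single node is always $\prec$-comparable to a path that does not "straddle" it in a column; Lemma~\ref{lem:2.12a} lets one conclude $s(\pi')\prec s(\pi_2)$ from $s(\pi_1)\not\prec s(\pi')$ when $(\pi_1,\pi_2)$ is ordered. The intended use: to show $(\sigma,\pi_1',\dots,\pi_{k-1}')$ is ordered I must show $s(\sigma)\prec s(\pi_j')$ for all $j$. For a single node $(a',b')$ of $\sigma$, Lemma~\ref{lem:2.11a} gives $\{(a',b')\}\prec s(\pi_j')$ or $s(\pi_j')\prec\{(a',b')\}$ (after checking the non-straddling hypothesis, which should hold because $(a',b')$ was the leftmost reachable node in its row so nothing in $s(\Pi)\setminus s(\sigma)$ lies in column $b'$ strictly below and above it — this needs care). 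Ruling out $s(\pi_j')\prec\{(a',b')\}$, and more generally assembling the node-by-node comparisons into $s(\sigma)\prec s(\pi_j')$, is where I expect to use Lemma~\ref{lem:2.12a} and Remark~\ref{rem:2.10a}(ii) (so that $\bigcup$ of things $\prec s(\pi_j')$ is $\prec s(\pi_j')$), exploiting that $\Pi'$ is \emph{already} ordered.

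\textbf{The main obstacle.} The delicate point is the choice of $\sigma$ and verifying it simultaneously (a) removes cleanly, leaving a genuine $(k-1)$-path on the complement, and (b) ends up $\prec$-below \emph{every} path of the inductively-ordered $\Pi'$ — note that $\Pi'$ is obtained by the inductive hypothesis and we have little control over how its paths are arranged relative to $\sigma$. It may be necessary instead to induct more cleverly: rather than fixing $\sigma$ first and then ordering the rest, one may need to interleave — e.g.\ prove a sharper statement by induction on the number of nodes, or order $\Pi$ by repeatedly applying a "swap" move that replaces two incomparable constituent paths $(\pi_i,\pi_j)$ by two comparable ones with the same combined support (a local exchange argument), and show this process terminates via a monotone potential (such as $\sum_i$ of the minimal column index of $\pi_i$, or a lexicographic measure). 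Given the surrounding development (Lemmas~\ref{lem:2.11a},~\ref{lem:2.12a} are clearly tailored for an inductive peeling argument), I expect the intended proof is the induction on $k$ with a carefully chosen $\sigma$, and the real work is the bookkeeping that $\sigma$ is $\prec$-minimal against the reordered remainder; I would budget most of the effort there and treat the "complement is still a $(k-1)$-path" claim as a short combinatorial observation about paths meeting each row at most once.
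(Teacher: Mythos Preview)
Your greedy-peeling idea is exactly what the paper does, but the inductive structure you impose creates a genuine gap that you mis-identify as a triviality.

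\textbf{The gap.} You write that after removing the leftmost-greedy path $\sigma$, ``deleting $s(\sigma)$ leaves $k-1$ disjoint paths covering the rest'' and call this ``a short combinatorial observation about paths meeting each row at most once''. It is not. Removing the nodes of $\sigma$ from each original $\pi_i$ still leaves $k$ (possibly empty) paths; there is no a priori reason any of them becomes empty, and merging two of them into one is exactly the nontrivial content of the theorem. To see that the complement is coverable by $k-1$ paths you would need to show that every maximum ``antichain'' (a set of nodes no two of which can lie on a common path) meets $\sigma$; that is essentially a Dilworth-type statement and is not a consequence of the one-node-per-row observation.

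\textbf{How the paper sidesteps this.} The paper does \emph{not} argue by induction on $k$. It peels greedily (from the right, dually to your leftmost choice) and simply iterates until the support is exhausted, obtaining an ordered $p$-path $(\rho_p,\dots,\rho_1)$ with no control on $p$ in advance. Only \emph{afterwards} does it show $p\le k$: assuming $p>k$, the greedy construction produces nodes $(a_1,b_1),\dots,(a_{k+1},b_{k+1})$ with $a_1\le\cdots\le a_{k+1}$ and $b_1>\cdots>b_{k+1}$, and pigeonhole then forces two of these into a single constituent path of the original $k$-path $\Pi$, which is impossible. That last antichain/pigeonhole step is precisely the content you are missing when you assert the complement is a $(k-1)$-path.

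\textbf{On Lemmas~\ref{lem:2.11a} and~\ref{lem:2.12a}.} These are not the engine here; the paper does not use them in this proof at all (they are used later, for extending ordered paths by singletons). The crucial property ``complement $\prec\rho_1$'' is proved directly from the greedy choice in a few lines, without invoking those lemmas. Your plan to assemble node-by-node comparisons via Lemma~\ref{lem:2.12a} against the inductively reordered $(k-1)$-path is much more delicate than necessary, and the non-straddling hypothesis of Lemma~\ref{lem:2.11a} is not obviously available for an arbitrary $\pi_j'$ coming out of the black-box inductive call.

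In short: keep the greedy peel, drop the induction on $k$, iterate the peel to exhaustion, and then supply the antichain argument bounding the number of peels by $k$.
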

\begin{proof}
In view of Remark~\ref{rem:2.13a}, it is enough to construct an ordered
$k'$-path $\Pi'=(\rho_{k'},\ldots,\rho_1)$ in $D$ where $1\le k'\le k$,
with $s(\Pi)=s(\Pi')$.
\par
Our first task is to construct a path $\rho_1$ and we do this in a
sequence of steps.
\par
%
%
\emph{First construction:}
\par
\hspace{1ex}
\parbox{6in}{%
Set $\Pi_0=s(\Pi)$, $\pi_0=\varnothing$;
also $\Pi_0\neq\varnothing$ by the definition of a path.
We construct two sequences $\{\Pi_r\}_{r\geq0}$ and
$\{\pi_r\}_{r\geq0}$ of sets of nodes of $D$.
\par
Let $r\ge1$ and assume that $\Pi_{r-1}$ and $\pi_{r-1}$ have been
constructed.
\par
\begin{tabular}{rp{5.5in}}
(A0) &
If $\Pi_{r-1}=\varnothing$, we terminate both sequences.
\\[1ex]
(A1) &
If $\Pi_{r-1}\ne\varnothing$, let $i_r$ be the least index of a row
in $D$ with a node in $\Pi_{r-1}$.
\\[1ex]
(A2) &
Let $x_r=(i_r,j_r)$ be the node in $\Pi_{r-1}$ with greatest column
index $j_r$.
\\[1ex]
(A3) &
If $\pi_{r-1}$ has no node  with column index greater than $j_r$,
let $\pi_r=\pi_{r-1}\cup\{x_r\}$; otherwise, let $\pi_r=\pi_{r-1}$.
\\[1ex]
(A4) &
Obtain $\Pi_r$ from $\Pi_{r-1}$ by removing all nodes in it with
row-index $i_r$.
\\[1ex]
(A5) &
Replace $r$ be $r+1$ and repeat (A0)-(A5).
\end{tabular}
\par\vspace{1ex}
In this process, $|\Pi_{r}|<|\Pi_{r-1}|$ if $|\Pi_{r-1}|>0$.
Hence, the process must terminate.
Also, since $\Pi_0\neq\varnothing$, the node $x_1=(i_1,j_1)$ exists
and $\pi_1=\{x_1\}\neq\varnothing$.
Let $\rho_1=\bigcup_{r\ge1}\pi_r$.
}
\par
By the first construction, $\rho_1$ is a path in $\Pi_0$.
If $\Pi_0=s(\rho_1)$, we can set $\Pi'=(\rho_1)$ and obtain the
result.
So we may assume $\Pi_0-s(\rho_1)\ne\varnothing$.
We show that $\Pi_0-s(\rho_1)\prec s(\rho_1)$.
\par
Let $(c_1,d_1)\in\Pi_0-s(\rho_1)$ and $(c_2,d_2)\in s(\rho_1)$ with
$c_1\le c_2$.
If $c_1=c_2$, then $d_1<d_2$ from the construction.
If $c_1<c_2$, there exists a node $(c_0,d_0)$ in $\rho_1$ with
$c_0\le c_1(<c_2)$ and $d_0>d_1$ from the construction.
Now $(c_0,d_0)$, $(c_2,d_2)$ are nodes of $\rho_1$ with $c_0<c_2$.
Since $\rho_1$ is a path, $d_0\le d_2$.
But $d_1<d_0$, so $d_1<d_2$ in this case also.
We conclude that $\Pi_0-s(\rho_1)\prec s(\rho_1)$.
\par
%
%
\emph{Second construction:}
\par
\hspace{1ex}
\parbox{6in}{%
Now let $P_0=P_1=\Pi_0$ and recall that $s(\rho_1)\subseteq P_1$.
We construct two sequences $\{P_q\}_{q\geq1}$ and $\{\rho_q\}_{q\geq1}$
where $P_q$ is a set of nodes of $D$ and $\rho_q$ is a path in $D$ with $s(\rho_q)\subseteq P_q$,
for $q\geq1$.

Let $r>1$ and assume that $P_{r-1}$ and $\rho_{r-1}$ have been
constructed so that $P_{r-1}\subseteq P_{r-2}$ and
$\rho_{r-1}$ is a path with $s(\rho_{r-1})\subseteq P_{r-1}$.
}
\par
\hspace{1ex}
\parbox{6in}{%
\begin{tabular}{rp{5.5in}}
(B0) &
Let $P_{r}=P_{r-1}-s(\rho_{r-1})$.
\\[1ex]
(B1) &
If $P_{r}=\varnothing$ then let $p=r-1$ and let $\Pi''$ be the
$p$-path $(\rho_{p},\rho_{p-1},\ldots,\rho_{1})$.
\\[1ex]
(B2) &
If $P_{r}\neq\varnothing$ then $P_{r}\prec s(\rho_{r-1})$ and
we construct the path $\rho_{r}$ with $s(\rho_r)\subseteq P_{r}$ using the first
construction.
\\[1ex]
(B3) &
Replace $r$ by $r+1$ and repeat steps (B0)-(B3).
\\[1ex]
\end{tabular}
}
\par\vspace{1ex}
In this process, if $|P_{r-1}|>0$ then $|P_{r}|<|P_{r-1}|$
and, since $s(\rho_{q-1})=P_{q-1}-P_{q}$ for $1<q\LEQ r$, the paths
$s(\rho_1),\ldots,s(\rho_{r-1})$ are mutually disjoint.
Also, for $1<q<r$, $s(\rho_{r-1})\subseteq P_{r-1}\subseteq P_{q}$ and,
since $P_{q}\prec s(\rho_{q-1})$, we get
 $s(\rho_{r-1})\prec s(\rho_{q-1})$.
Since the sizes of the sets in the sequence $\{P_r\}$ are strictly
decreasing, the process must terminate and the $p$-path $\Pi''$ is
an ordered $p$-path which is equivalent to $\Pi$.
\par
If $p\LEQ k$, the remarks at the start of the proof complete the
proof.
So we can assume that $p>k$.
We can easily deduce from this assumption that $P_{k+1}\ne\varnothing$ or, equivalently, that $P_k$ is not the support of a path.
It follows that there are nodes $(a_k,b_k)$ and $(a_{k+1},b_{k+1})$ in
$P_k$ such that $a_k\LEQ a_{k+1}$ and $b_k>b_{k+1}$.
Now node $(a_k,b_k)$ is not a node of $\rho_{k-1}$.
Hence, by the first construction, there is a node $(a_{k-1},b_{k-1})$
of path $\rho_{k-1}$ such that $a_{k-1}\LEQ a_k$ and $b_{k-1}>b_k$,
since $(a_k,b_k)$ was not picked in forming $\rho_{k-1}$.
Be repeating this argument we can find, for $k-2\GEQ l\GEQ 1$, a node
$(a_l,b_l)$ in path $\rho_l$ with $a_l\LEQ a_{l+1}$ and $b_l>b_{l+1}$.
Now, from the way they are located, the $k+1$ nodes $(a_{k+1},b_{k+1})$, \ldots, $(a_1,b_1)$
cannot belong to $k$ or fewer paths
but clearly belong to the $k$-path $\Pi$.
We have thus reached the desired final contradiction and completed the
proof.
\end{proof}

\begin{example}
\label{ex:2.15z}
\rm
This is an example of a 7-path which is not ordered and equivalent ordered $k$-paths.
Let $\Pi$ be the 7-path
\\
$\begin{array}{*{6}{l}}
( & ((1,1),(4,2),(6,4)), &
 ((1,2),(3,3),(4,3),(6,5)), &
 ((1,3),(3,4),(4,4)),
\\
 & ((3,5),(4,5)), &
 ((1,4),(2,4),(5,4),(6,6)), &
 ((1,5),(3,6),(4,7)),
\\
 & ((2,1),(5,3)) \quad ),
\end{array}$
\\
let $\Pi''$ be the 5-path
\\
$\begin{array}{*{6}{l}}
( & ((1,1),(2,1),(4,2)), &
 ((1,2),(3,3),(4,3),(5,3),(6,4)),
\\
 & ((1,3),(3,4),(4,4),(5,4),(6,5)), &
 ((1,4),(2,4),(3,5),(4,5),(6,6)),
\\
 & ((1,5),(3,6),(4,7)) \quad ),
\end{array}$
\\
and let $\Pi'''$ be the 7-path
\\
$\begin{array}{*{6}{l}}
( & ((2,1),(5,3)), &
 ((1,1),(4,2),(6,4)), &
 ((1,2),(3,3),(4,3),(6,5)),
\\
 & ((5,4),(6,6)), &
 ((1,3),(3,4),(4,4)), &
 ((1,4),(2,4),(3,5),(4,5)),
\\
 & ((1,5),(3,6),(4,7)) \quad ).
\end{array}$
\par
These three $k$-paths are described diagrammatically in
Table~\ref{tabl:1}, where the nodes on each path of a $k$-path are
represented by the index of that path.
$\Pi''$ is the equivalent ordered $k'$-path (here $k'=5$) produced by the algorithm of
Theorem~\ref{thm:2.14a}.
There are other equivalent ordered $k$-paths.
$\Pi'''$ is one such $k$-path with $k=7$.
%
\setcounter{table}{0}
\begin{table}[h]
\begin{center}
$\begin{array}{*{7}{c}}
\\[-7ex]
\setlength{\arraycolsep}{6pt}
\begin{array}{*{7}{c}}
\clx 1 & \clx 2 & \clx 3 & \clx 5 & \clx 6 &   &   \\
\clx 7 &   &   & \clx 5 &   &   &   \\
  &   & \clx 2 & \clx 3 & \clx 4 & \clx 6 &   \\
  & \clx 1 & \clx 2 & \clx 3 & \clx 4 &   & \clx 6 \\
  &   & \clx 7 & \clx 5 &   &   &   \\
  &   &   & \clx 1 & \clx 2 & \clx 5 &   \\
\end{array}
&\hspace{10pt}&
\setlength{\arraycolsep}{6pt}
\begin{array}{*{7}{c}}
\clx 1 & \clx 2 & \clx 3 & \clx 4 & \clx 5 &   &   \\
\clx 1 &   &   & \clx 4 &   &   &   \\
  &   & \clx 2 & \clx 3 & \clx 4 & \clx 5 &   \\
  & \clx 1 & \clx 2 & \clx 3 & \clx 4 &   & \clx 5 \\
  &   & \clx 2 & \clx 3 &   &   &   \\
  &   &   & \clx 2 & \clx 3 & \clx 4 &   \\
\end{array}
&\hspace{10pt}&
\setlength{\arraycolsep}{6pt}
\begin{array}{*{7}{c}}
\clx 2 & \clx 3 & \clx 5 & \clx 6 & \clx 7 &   &   \\
\clx 1 &   &   & \clx 6 &   &   &   \\
  &   & \clx 3 & \clx 5 & \clx 6 & \clx 7 &   \\
  & \clx 2 & \clx 3 & \clx 5 & \clx 6 &   & \clx 7 \\
  &   & \clx 1 & \clx 4 &   &   &   \\
  &   &   & \clx 2 & \clx 3 & \clx 4 &   \\
\end{array}
\\[6ex]
\Pi && \Pi'' && \Pi'''
\\[-2ex]
\end{array}
$
\end{center}
\caption{%
constituent paths indicated by path indices and colours%
\label{tabl:1}
}
\end{table}
\end{example}

Finally for this section we establish some results which will play some part in the arguments in Section~\ref{sec:3a}.
\begin{lemma}
\label{lem:3.3a}
Let $\Pi=(\pi_1,\ldots,\pi_k)$ be an ordered $k$-path in a diagram
$D$, and let $(a',b')$ be a node of $D$ which is not in $s(\Pi)$.
\par
\begin{tabular}{rp{5.7in}}
(i) &
If there is a path $\pi_j$, $1\LEQ j\LEQ k$, with a pair of nodes
$(a_1,b')$ and $(a_2,b')$ with $a_1<a'<a_2$, then $\Pi$ may be extended
to an ordered $k$-path
$\Pi'=(\pi_1,\ldots,\pi_{j-1},\pi_j',\pi_{j+1},\ldots,\pi_k)$
where $s(\pi_j')=s(\pi_j)\cup\{(a',b')\}$.
\\[2ex]
(ii) &
If there is  no path $\pi_j$, $1\LEQ j\LEQ k$, with a pair of nodes
$(a_1,b')$ and $(a_2,b')$ with $a_1<a'<a_2$, then $\Pi$ may be extended
to an ordered $(k+1)$-path
$\Pi'=(\pi_1,\ldots,\pi_{k'},\pi',\pi_{k'+1},\ldots,\pi_k)$
for some $k'$, $0\LEQ k'\LEQ k$, where $s(\pi')=\{(a',b')\}$.
\end{tabular}
\end{lemma}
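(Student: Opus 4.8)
The plan is to treat the two cases separately, in each case inserting the node $(a',b')$ into the $k$-path and then verifying that the enlarged collection is still an ordered $k$-path (resp.\ ordered $(k+1)$-path). The starting observation in both cases is that, since $\Pi$ is ordered, for each index $i$ the support $s(\pi_i)$ lies in $\mathcal L(s(\pi_{i+1}))\cap\cdots\cap\mathcal L(s(\pi_k))$ and in $\mathcal R(s(\pi_1))\cap\cdots\cap\mathcal R(s(\pi_{i-1}))$, by Lemma~\ref{lem:2.9a}; so to check that a modified path sits in the right place relative to all the others it suffices, via Remark~\ref{rem:2.10a}(ii), to compare it with its immediate neighbours on each side, and in fact just to establish the one-sided relations $\bigcup_{i<m}s(\pi_i)\prec(\text{new support})$ and $(\text{new support})\prec\bigcup_{i>m}s(\pi_i)$ at the point $m$ where the new material is inserted.

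For part~(i): here $(a',b')$ lies strictly between two nodes $(a_1,b')$, $(a_2,b')$ of the single path $\pi_j$. First I would check that $s(\pi_j)\cup\{(a',b')\}$ is again the support of a path, i.e.\ that inserting $(a',b')$ (in row $a'$, which by $a_1<a'<a_2$ sits strictly between the rows of the two bracketing nodes) keeps the row-indices strictly increasing and the column-indices weakly increasing; since the column index $b'$ equals that of both neighbours, weak monotonicity of columns is preserved, and strict monotonicity of rows is exactly the hypothesis $a_1<a'<a_2$ together with the fact that $\pi_j$ has no node in row $a'$ (which holds because $(a',b')\notin s(\Pi)$ and no path has two nodes in the same row). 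Call this new path $\pi_j'$. It remains to see that replacing $\pi_j$ by $\pi_j'$ keeps the $k$-path ordered. The only relations that could be affected are $s(\pi_i)\prec s(\pi_j')$ for $i<j$ and $s(\pi_j')\prec s(\pi_i)$ for $i>j$. For $i<j$: take $(a,b)\in s(\pi_i)$ with $a\le a'$; I want $b<b'$. Since $(a_1,b')$ or $(a_2,b')$ is a node of $\pi_j$ with row $\ge a_1$, and $a'$ sits between $a_1$ and $a_2$, one of the two bracketing nodes has row-index $\ge a$ — indeed $(a_2,b')$ has $a_2>a'\ge a$ — so from $s(\pi_i)\prec s(\pi_j)$ applied at the node $(a_2,b')$ we get $b<b'$. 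For $i>j$: take $(a',b')$ against $(c,d)\in s(\pi_i)$ with $a'\le c$; I want $b'<d$. Use the node $(a_1,b')\in s(\pi_j)$: since $a_1<a'\le c$, $s(\pi_j)\prec s(\pi_i)$ gives $b'<d$. So both directions go through, using the two bracketing nodes of $\pi_j$ — one to compare leftward, one to compare rightward. This is the case I expect to be essentially routine.

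For part~(ii): now no path $\pi_j$ has a pair of nodes straddling row $a'$ in column $b'$, so by Lemma~\ref{lem:2.11a} applied to each constituent path, for every $j$ either $\{(a',b')\}\prec s(\pi_j)$ or $s(\pi_j)\prec\{(a',b')\}$. The key structural claim — and this is the step I expect to be the main obstacle — is that the set of $j$ with $s(\pi_j)\prec\{(a',b')\}$ is an initial segment $\{1,\dots,k'\}$ of $\{1,\dots,k\}$, so that the new length-one path $\pi'$ can be slotted in at position $k'+1$. To prove this I would argue: suppose $s(\pi_j)\prec\{(a',b')\}$ and $i<j$; I must show $s(\pi_i)\prec\{(a',b')\}$, equivalently rule out $\{(a',b')\}\prec s(\pi_i)$. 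If both $\{(a',b')\}\prec s(\pi_i)$ and $s(\pi_j)\prec\{(a',b')\}$ held while $i<j$, then applying Lemma~\ref{lem:2.12a} to the ordered $2$-path $(\pi_i,\pi_j)$ — wait, the indices are in the wrong order for that lemma, since $i<j$ means $s(\pi_i)\prec s(\pi_j)$; what I actually want is to derive a contradiction from $s(\pi_j)\prec\{(a',b')\}\prec s(\pi_i)$ together with $s(\pi_i)\prec s(\pi_j)$, i.e.\ a $3$-cycle in $\prec$ among $s(\pi_i)$, $\{(a',b')\}$, $s(\pi_j)$. Concretely: pick $(a,b)\in s(\pi_j)$; pick $(c,d)\in s(\pi_i)$; from $s(\pi_i)\prec s(\pi_j)$, from $s(\pi_j)\prec\{(a',b')\}$, and from $\{(a',b')\}\prec s(\pi_i)$ one extracts, by choosing the nodes appropriately (e.g.\ the node of $s(\pi_i)$ realizing the left side of $(a',b')$ and the node of $s(\pi_j)$ realizing the right side), an impossible chain of strict inequalities on a single column or row. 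I would write this out carefully: it is exactly the kind of small planar argument used in Lemma~\ref{lem:2.12a} and in the final paragraph of the proof of Theorem~\ref{thm:2.14a}. Once the initial-segment claim is in hand, set $\pi'$ to be the length-one path with $s(\pi')=\{(a',b')\}$ and put $\Pi'=(\pi_1,\dots,\pi_{k'},\pi',\pi_{k'+1},\dots,\pi_k)$; disjointness is immediate since $(a',b')\notin s(\Pi)$, and orderedness follows because $s(\pi_i)\prec\{(a',b')\}$ for $i\le k'$, $\{(a',b')\}\prec s(\pi_i)$ for $i\ge k'+1$, and all the relations among the original $\pi_i$'s are untouched; by Remark~\ref{rem:2.10a}(ii) and the reformulation in Definition~\ref{rem:2.6a} this is exactly orderedness of $\Pi'$.
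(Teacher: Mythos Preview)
Your argument for part~(i) is correct and essentially identical to the paper's: the two bracketing nodes $(a_1,b')$ and $(a_2,b')$ are used exactly as you describe to verify $s(\pi_i)\prec s(\pi_j')$ for $i<j$ and $s(\pi_j')\prec s(\pi_i)$ for $i>j$.

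Part~(ii), however, has a genuine gap. Your ``key structural claim'' --- that the set $\{j:s(\pi_j)\prec\{(a',b')\}\}$ is an initial segment of $\{1,\dots,k\}$ --- is \emph{false}. Take $k=2$ with $s(\pi_1)=\{(3,5)\}$, $s(\pi_2)=\{(1,1)\}$, and $(a',b')=(4,2)$. Then $(\pi_1,\pi_2)$ is an ordered $2$-path (the defining condition is vacuous), and one checks $s(\pi_2)\prec\{(4,2)\}$ but $s(\pi_1)\not\prec\{(4,2)\}$ (since $3\le4$ yet $5\ge2$). So the set in question is $\{2\}$, not an initial segment. Your proposed contradiction from the $3$-cycle $s(\pi_j)\prec\{(a',b')\}\prec s(\pi_i)$ and $s(\pi_i)\prec s(\pi_j)$ therefore cannot succeed: all three relations hold simultaneously in this example (recall from Remark~\ref{rem:2.10a}(iii),(iv) that $\prec$ is neither transitive nor antisymmetric). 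Note that in this example the lemma is still true, with $k'=0$, since $\{(4,2)\}\prec s(\pi_1)$ and $\{(4,2)\}\prec s(\pi_2)$ both hold.

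The paper sidesteps this by \emph{defining} $l$ to be the largest index with $s(\pi_j)\prec s(\pi')$ for all $j\le l$ (so the initial-segment property holds by construction) and then proving $s(\pi')\prec s(\pi_j)$ for every $j>l$. The point is that $s(\pi_{l+1})\not\prec s(\pi')$ by maximality of $l$; now Lemma~\ref{lem:2.11a} gives $s(\pi')\prec s(\pi_{l+1})$, and Lemma~\ref{lem:2.12a} applied to the ordered $2$-path $(\pi_{l+1},\pi_j)$ for each $j\ge l+2$ (with the hypothesis $s(\pi_{l+1})\not\prec s(\pi')$) gives $s(\pi')\prec s(\pi_j)$. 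This is exactly the application of Lemma~\ref{lem:2.12a} you were reaching for but with the indices arranged correctly: the path on which the $\not\prec$ relation is known must be the \emph{first} component of the ordered $2$-path.
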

\begin{proof} (i)
In this case, it is immediate that $\pi_j'$ is a path in $D$.
If $i<j$ and $(a,b)\in s(\pi_{i})$ satisfies $a\LEQ a'$,
then $a<a_2$.
So, $b<b'$.
Hence, $s(\pi_{i})\prec s(\pi_j')$.
However, if $j<i$ and $(a,b)\in s(\pi_{i})$ satisfies $a'\LEQ a$,
then $a_1<a$.
So, $b'<b$.
Hence, $s(\pi_j')\prec s(\pi_{i})$.
It follows that
$\Pi'=(\pi_1,\ldots,\pi_{j-1},\pi_j',\pi_{j+1},\ldots,\pi_k)$
is an ordered $k$-path.
\par
(ii)
Let $\pi'$ be the path with $s(\pi')=\{(a',b')\}$.
Let $l$ be the maximum index, $0\LEQ l\LEQ k$, such that
$s(\pi_j)\prec s(\pi')$ for all $j$ with $1\LEQ j\LEQ l$;
$l=0$ indicates $s(\pi_1)\not\prec s(\pi')$.
If $l=k$ then we may take
$\Pi'=(\pi_1,\ldots,\pi_k,\pi')$.
If $l<k$ then $s(\pi_{l+1})\not\prec s(\pi')$.
By Lemma~\ref{lem:2.12a}, we get
$s(\pi')\prec s(\pi_j)$ for $l+2\LEQ j\LEQ k$.
Also, by Lemma~\ref{lem:2.11a}, we get  $s(\pi')\prec s(\pi_{l+1})$.
So we may take
$\Pi'=(\pi_1,\ldots,\pi_l,\pi',\pi_{l+1},\ldots,\pi_k)$.
\end{proof}
\begin{corollary}
\label{cor:3.4a}
Let $\Pi=(\pi_1,\ldots,\pi_k)$ be an ordered $k$-path in a diagram
$D$, and let $(a_i',b_i')$, $1\LEQ i\LEQ l$, be $l$ distinct nodes of
$D$ which are not in $\Pi$.
If no path $\pi_j$, $1\LEQ j\LEQ k$, contains a pair of nodes of the
form $(a_{i,j,1},b_i')$, $(a_{i,j,2},b_i')$ with
$a_{i,j,1}<a_i'<a_{i,j,2}$ for any $i$ satisfying $1\LEQ i\LEQ l$,
then the paths $((a_i',b_i'))$ may be inserted into the sequence $\Pi$
to give an ordered $(k+l)$-path.
\end{corollary}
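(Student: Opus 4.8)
The plan is to prove Corollary~\ref{cor:3.4a} by induction on $l$, using Lemma~\ref{lem:3.3a} as the inductive step. The base case $l=0$ is trivial (no nodes to insert, $\Pi$ itself is the required ordered $k$-path). For the inductive step, suppose the result holds for $l-1$ and we are given $l$ distinct nodes $(a_i',b_i')$, $1\LEQ i\LEQ l$, none of which lies in $s(\Pi)$ and none of which is ``straddled'' in a column by two nodes of a single constituent path of $\Pi$. The first move is to insert the single node $(a_l',b_l')$ using Lemma~\ref{lem:3.3a}: since by hypothesis no path $\pi_j$ contains a pair $(a_1,b_l')$, $(a_2,b_l')$ with $a_1<a_l'<a_2$, we are in case (ii) of that lemma, so $\Pi$ extends to an ordered $(k+1)$-path $\Pi^*$ with one new constituent path $\pi^*$ satisfying $s(\pi^*)=\{(a_l',b_l')\}$.

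Next I would apply the inductive hypothesis to the ordered $(k+1)$-path $\Pi^*$ together with the remaining $l-1$ nodes $(a_i',b_i')$, $1\LEQ i\LEQ l-1$. To do this I must check that the hypothesis of the corollary is still satisfied, i.e.\ that no constituent path of $\Pi^*$ contains a straddling pair for any of these $l-1$ column indices. For the original paths $\pi_1,\ldots,\pi_k$ this is immediate since their supports are unchanged. For the new path $\pi^*$ it is also immediate, since $s(\pi^*)$ is a single node and hence cannot contain two nodes in any column at all. Therefore the inductive hypothesis applies and yields an ordered $(k+1)+(l-1)=k+l$-path obtained from $\Pi^*$ by inserting the paths $((a_i',b_i'))$, $1\LEQ i\LEQ l-1$; this is the desired ordered $(k+l)$-path obtained from $\Pi$ by inserting all $l$ new singleton paths.

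The main subtlety — though it is minor — is bookkeeping: one must be careful that the distinctness of the $l$ nodes is preserved (it trivially is) and that inserting $(a_l',b_l')$ first does not spoil the no-straddling condition for the other indices with respect to $\pi^*$; as noted, a singleton path trivially satisfies this, so there is no real obstacle. One could equally well note that Lemma~\ref{lem:3.3a}(i) is never needed here, precisely because the hypothesis of the corollary rules out straddling pairs, so every insertion falls into case~(ii) and increases the number of constituent paths by exactly one, accounting for the final count $k+l$. An alternative, non-inductive phrasing would be to process the nodes one at a time in any fixed order and observe that each application of Lemma~\ref{lem:3.3a}(ii) leaves the hypothesis intact for the nodes not yet inserted; but the induction on $l$ makes the logic cleanest and I would present it that way.
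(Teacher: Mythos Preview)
Your proof is correct and follows exactly the same approach as the paper, which simply states that the proof is a repeated application of Lemma~\ref{lem:3.3a}(ii). You have merely spelled out the induction carefully, including the observation that the newly inserted singleton path cannot create a straddling pair, which is the natural way to make ``repeated application'' rigorous.
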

\begin{proof}
The proof is a repeated application of Lemma~\ref{lem:3.3a}(ii).
\end{proof}
\par
We will refer to the process described in Corollary~\ref{cor:3.4a}
as \emph{extending an ordered $k$-path by paths of length one}.

\begin{proposition}\label{propOrdSpecial}
Let $\lambda\vDash n$ and let $y\in Y(\lambda)$.
Set $D=D(y,\lambda)$.
Suppose that $D=s(\Pi)$ for some ordered $k$-path $\Pi$.
Then (i) and (ii) below hold.

(i) If $D(\Pi)$ is admissible and $D(\Pi)=D(w_{D(\Pi)},\lambda)$, then $D=D(\Pi)$.

(ii) If $\Pi$ has type $\lambda'$, then $D=D(\Pi)$ and $D$ is special.
\end{proposition}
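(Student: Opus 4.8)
The plan is to deduce both parts from Lemma~\ref{lBasic} together with Corollary~\ref{lemmaSpecial} (for part~(i)) and the characterisation of admissible diagrams via subsequence type in Result~\ref{res:8a} (for part~(ii)). For part~(i), the key observation is that Lemma~\ref{lBasic} tells us $t^{D(\Pi)}w_D$ is a standard $D(\Pi)$-tableau. Writing $E=D(\Pi)$, we are given that $E$ is admissible and that $E=D(w_E,\lambda)$. These are exactly the hypotheses of Lemma~\ref{BlemmaSpecial} (with $y$ and $D=D(y,\lambda)$ as in the present statement), so Lemma~\ref{BlemmaSpecial} yields $E=D$, i.e.\ $D=D(\Pi)$. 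So part~(i) is essentially an immediate invocation of the machinery already set up; there is nothing substantial to prove beyond quoting Lemma~\ref{lBasic} and Lemma~\ref{BlemmaSpecial}.

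For part~(ii) I would argue as follows. Suppose $\Pi=(\pi_1,\dots,\pi_k)$ has type $\lambda'=(\lambda_1',\dots,\lambda_{r'}')$; in particular $k=r'$ (the number of parts of $\lambda'$) and $s(\Pi)=D$ has $\sum_j\lambda_j'=n$ nodes. Set $E=D(\Pi)$. By construction of $D(\Pi)$ in Definition~\ref{dBasic}, the node replacing a node of $\pi_j$ lands in column $j$; since the constituent paths of $\Pi$ are ordered and their lengths are the parts of $\lambda'$, the columns of $E$ have sizes given by $\lambda'$ arranged so that column $j$ has $|\pi_j|$ nodes. The point is that $E$ is then admissible: by Lemma~\ref{lBasic}, $t^Ew_D$ is a standard $E$-tableau, hence $w_D$ is a prefix of $w_E$ by Result~\ref{res:3a}; and one checks directly that the $k$-path $\Pi$ transported into $E$ via $\theta_{D,E}$ (using Proposition~\ref{prop:2.3a}, whose hypothesis is exactly the standardness just noted) becomes, for each $u$, a $u$-subpath realising $\min\{u,\lambda_i\}$ nodes on each row $i$ — so $E$ is admissible by Result~\ref{res:6a}. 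Alternatively, and more cleanly, $E$ has column-composition a rearrangement of $\lambda'$ with $\mu_E''=\lambda_E'$ forcing $E$ to be special (via Result~\ref{res:2a}), since its columns realise the conjugate partition exactly; a special diagram is automatically admissible and satisfies $E=D(w_E,\lambda)$ by Remark~\ref{rSpecial}. Either way we obtain that $E=D(\Pi)$ is admissible with $D(\Pi)=D(w_{D(\Pi)},\lambda)$, so part~(i) applies and gives $D=D(\Pi)=E$. Finally, since $E$ is special (as just argued) and $D=E$, the diagram $D$ is special, completing the proof.

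The main obstacle I anticipate is verifying cleanly that $D(\Pi)$ is special (equivalently admissible) when $\Pi$ has type $\lambda'$: one must confirm that replacing the nodes of $\pi_j$ by nodes in column $j$ produces a diagram whose column-composition is a genuine rearrangement of $\lambda'$ \emph{and} whose row-composition is still $\lambda$ (which holds because $D(\Pi)\in\mathcal D^{(\lambda)}$ by Definition~\ref{dBasic}), and then that $\lambda_{D(\Pi)}''=\mu_{D(\Pi)}'$ so that Result~\ref{res:2a}(ii) applies. This is a short combinatorial bookkeeping argument — the lengths $|\pi_j|$ are precisely the parts of $\lambda'$, so the multiset of column sizes of $D(\Pi)$ is $\{\lambda_1',\dots,\lambda_{r'}'\}$, whence $\mu_{D(\Pi)}' = (\lambda')' $ and $\lambda_{D(\Pi)}'' = (\lambda'')' = \lambda' $ as well after noting $\lambda_{D(\Pi)}=\lambda$ — but it needs to be spelled out. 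Everything else is a direct appeal to results already established in Sections~\ref{sec:2c} and~\ref{sec:OrdPaths}.
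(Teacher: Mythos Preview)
Your plan is correct and matches the paper's proof almost exactly: part~(i) is Lemma~\ref{lBasic} plus Lemma~\ref{BlemmaSpecial}, and part~(ii) is the observation that $D(\Pi)$ is special (hence admissible by Result~\ref{res:8a}(ii) and equal to $D(w_{D(\Pi)},\lambda)$ by Remark~\ref{rSpecial}), after which part~(i) applies. One small slip in your bookkeeping paragraph: you want $\lambda_{D(\Pi)}''=\lambda''$ (since $\lambda_{D(\Pi)}=\lambda$) and $\mu_{D(\Pi)}'=(\lambda')'=\lambda''$ (since $\mu_{D(\Pi)}$ rearranges $\lambda'$), so both sides equal $\lambda''$, not $\lambda'$ as you wrote.
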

\begin{proof}
(i) This is immediate from Lemmas~\ref{BlemmaSpecial} and~\ref{lBasic}.

(ii) If $\Pi$ has type $\lambda'$, then $k$ equals  the number of parts of $\lambda'$ and, moreover,  $D(\Pi)\in \mathcal D^{(\lambda)}$ is a special diagram.
(In fact, $D(\Pi)$ is the unique element of $\mathcal D^{(\lambda,\mu)}$ where $\mu=(\mu_1,\ldots,\mu_k)\vDash n$ with $\mu_i=$ length of $\pi_i$ for $1\LEQ i\LEQ k$.)
Combining item (i) of this proposition with Remark~\ref{rSpecial} and Result~\ref{res:8a}(ii) we get $D=D(\Pi)$ and hence the desired result.
\end{proof}

\end{section}
\begin{section}
{Determining the rim for certain families of cells}
\label{sec:3a}
For an arbitrary composition $\lambda=(\lambda_1,\ldots,\lambda_r)\vDash n$,
let $\lambda_{*}=(\lambda_1,\ldots,\lambda_r,1)\vDash n+1$.
In \cite[Section~4]{MPa17}, there is a well-defined mapping $\psi$ from
the set of admissible diagrams in $\mathcal{D}^{(\lambda)}$ to the set
of admissible diagrams in $\mathcal{D}^{(\lambda_*)}$, which induces an
injective mapping $\theta_{*}\colon Y(\lambda)\to Y(\lambda_{*})$.
For a given admissible diagram $D$ in $\mathcal{D}^{(\lambda)}$, the
diagram $D\psi$ is obtained by examining all diagrams constructed from
$D$ by appending an $(r+1)$-th row with a single node to $D$ and
selecting the diagram which is admissible and such that the column index
of the new node is minimal.
The mapping $\psi$ induces an injection $\mathcal E^{(\lambda)}\to\mathcal E^{(\lambda_*)}$ and
the mapping $\theta_{*}$ is then given by $w_{D}\mapsto w_{D\psi}$.
In  Proposition~\ref{prop:3.2a}, we obtain a condition on $\lambda$ which
ensures that the mapping $\theta_{*}$ is a bijection.
\begin{lemma}
\label{lem:3.1a}
Let $r\GEQ 2$, let $n\GEQ 2$,
let $\lambda=(\lambda_1,\ldots,\lambda_r)\vDash n$
be an $r$-part composition with $\lambda_r=1$.
Also let $\tilde\lambda=(\lambda_1,\ldots,\lambda_{r-1})\vDash n-1$ and 
let $\lambda'$, the conjugate of $\lambda$, be given by $\lambda'=(\lambda_1',\ldots,\lambda_{r'}')$.
\par
Suppose $D$ is an admissible diagram in $\mathcal{D}^{(\lambda)}$.
Then the diagram $\tilde D$, obtained from $D$ by removing the
$r$-th row, is an admissible diagram in $\mathcal{D}^{(\tilde\lambda)}$.
Moreover, for $1\LEQ k\LEQ r'$, every $k$-path in $D$ of
length $\lambda_1'+\cdots+\lambda_k'$ contains the node on the
$r$-th row of $D$.
\end{lemma}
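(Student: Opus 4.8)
The statement has two parts, and I would handle them in order. For the first part, I need to show that $\tilde D$ is an admissible diagram in $\mathcal D^{(\tilde\lambda)}$. Clearly $\tilde D$ has row-composition $\tilde\lambda$, and it has no empty columns or rows provided we check that removing the single node on row $r$ does not empty a column; but any column containing that node must contain another node (else $D$ would have had a column of size $1$ which, by admissibility together with Result~\ref{res:6a}, forces constraints I can exploit), so in fact I should argue this directly from the key claim in the second part. So the natural order is to prove the ``moreover'' clause first and then deduce admissibility of $\tilde D$ from it.

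\textbf{The key claim.} The heart of the matter is: for $1\LEQ k\LEQ r'$, every $k$-path $\Pi$ in $D$ of length $\lambda_1'+\cdots+\lambda_k'$ contains the unique node $x$ on the $r$-th (last) row of $D$. Since $D$ is admissible, it is of subsequence type $\lambda_D'=\lambda'$, so such a maximal $k$-path exists. By Result~\ref{res:6a}, a $k$-path of this maximal length must contain exactly $\min\{k,\lambda_i\}$ nodes on the $i$-th row of $D$ for every $i$. The $r$-th row has $\lambda_r = 1$ node, so $\min\{k,\lambda_r\} = \min\{k,1\} = 1$ since $k\GEQ 1$. Hence $\Pi$ contains exactly one node on the $r$-th row, and that node is necessarily $x$. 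This is short once Result~\ref{res:6a} is invoked; the main thing to get right is simply that $k\GEQ 1$ guarantees $\min\{k,1\}=1$.

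\textbf{Deducing admissibility of $\tilde D$.} Now I use the key claim to build, for each $k$ with $1\LEQ k\LEQ r'$, a suitable $k$-path in $\tilde D$. Take a maximal $k$-path $\Pi$ in $D$; it contains $x$, the lone node of row $r$. The node $x$ lies on one constituent path $\pi_j$ of $\Pi$, and since $r$ is the last row, $x$ is the final node of $\pi_j$. Deleting $x$ from $\pi_j$ (or deleting $\pi_j$ entirely if $\pi_j = (x)$) yields a $k$-path (or $(k{-}1)$-path) $\tilde\Pi$ in $\tilde D$ with exactly $\min\{k,\lambda_i\}$ nodes on row $i$ for each $i\LEQ r-1$. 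A slightly cleaner route: note that $\tilde\lambda' = (\lambda_1' - 1, \ldots)$ truncated appropriately — more precisely, removing one node from the last row decreases exactly one part of the conjugate, namely $\lambda_1'$ decreases by $1$ if $\lambda_r=1$ is a ``new'' column, or some $\lambda_j'$ decreases by $1$ otherwise; in all cases $\tilde\lambda'$ is obtained from $\lambda'$ by subtracting $1$ from the part equal to the column-multiplicity of $x$'s column. The $k$-path $\tilde\Pi$ then has exactly $\min\{k,\lambda_i\} = \min\{k, \tilde\lambda_i\}$ nodes on the $i$-th row of $\tilde D$ for all $i$, so by the second part of Result~\ref{res:6a}, $\tilde D$ is admissible. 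Before invoking Result~\ref{res:6a} I must confirm $\tilde D$ is a genuine principal diagram (no empty columns): if the column of $x$ in $D$ contained only $x$, then $\lambda_{D}'$ would have a part equal to $1$, but one still needs $\tilde D$ to have no empty columns — if that column becomes empty, simply re-index columns; since the paper's diagrams are taken up to such re-indexing (``both rows and columns of $D$ are indexed consecutively from $1$''), this is harmless, but I should remark on it explicitly.

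\textbf{Main obstacle.} The genuinely delicate point is the bookkeeping around what happens to the conjugate composition and the column indices of $\tilde D$ when the last-row node is removed — in particular handling the case where that node occupies a column by itself versus shares its column with nodes on higher rows, and making sure the node counts $\min\{k,\tilde\lambda_i\}$ match up so Result~\ref{res:6a} applies cleanly. The path-truncation argument itself is routine once the key claim is in hand.
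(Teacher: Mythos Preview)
Your approach is essentially correct and close to the paper's. The paper even notes (in parentheses) that the ``moreover'' clause follows directly from Result~\ref{res:6a}, exactly as you argue. For admissibility of $\tilde D$, the paper takes a slightly different route: it invokes Result~\ref{res:8a}(i) to get the upper bound $\nu\unlhd(\tilde\lambda)'$ on the subsequence type $\nu$ of $\tilde D$, and then observes that $s(\Pi)-\{N\}$ supports a $k$-path in $\tilde D$ of length $\lambda_1'+\cdots+\lambda_k'-1$, forcing equality. Your plan to use the second half of Result~\ref{res:6a} instead is also fine.

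What you have misidentified as the ``main obstacle'' is a non-issue, and your discussion of it contains a genuine confusion. The conjugate $(\tilde\lambda)'$ is determined by the \emph{composition} $\tilde\lambda=(\lambda_1,\ldots,\lambda_{r-1})$ alone; it has nothing whatsoever to do with which column of $D$ the node $x$ sits in. Since $\lambda_r=1$, removing this part decreases $\lambda_1'=|\{j:\lambda_j\GEQ 1\}|=r$ by one and leaves $\lambda_j'=|\{j:\lambda_j\GEQ j\}|$ unchanged for $j\GEQ 2$; thus $(\tilde\lambda)'=(\lambda_1'-1,\lambda_2',\ldots,\lambda_{r'}')$ unconditionally, with no case split. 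You are conflating the conjugate of the row-composition with the column-composition $\mu_{\tilde D}$ of the diagram---only the latter depends on where $x$ sits, and it plays no role here. The possible empty column in $\tilde D$ is handled by the standing convention of re-indexing to a principal diagram, which leaves subsequence type and admissibility invariant.

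Finally, the ``or $(k{-}1)$-path'' worry is also unnecessary: since $k\LEQ r'=\max_i\lambda_i$ and $\lambda_r=1$, there is a row $i_0\neq r$ with $\lambda_{i_0}\GEQ k$, and by Result~\ref{res:6a} each constituent path of a maximal $k$-path meets row $i_0$; hence the path through $x$ has length at least $2$ and survives the deletion.
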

\begin{proof}
By definition, $D$ has subsequence type $\lambda'$.
Let $\nu$ be the subsequence type of $\tilde D$.
By Result~\ref{res:8a}(i), $\nu\unlhd(\tilde\lambda)'$
$=(\lambda_1'-1,\lambda_2',\ldots,\lambda_{r'}')$.
Let $N$ denote the node on the $r$-th row of $D$,
let $\Pi$ be a $k$-path in $D$ of length
$\lambda_1'+\cdots+\lambda_k'$, where $1\LEQ k\LEQ r'$.
If $\Pi$ did not contain $N$, then $\Pi$ would also be a $k$-path in
$\tilde D$.
However, $k$-paths in $\tilde D$ have length at most
$\lambda_1'+\cdots+\lambda_k'-1$.
Hence, $\Pi$ must contain $N$.
(Alternatively, this follows from Result~\ref{res:6a}.)
Moreover, $s(\Pi)-\{N\}$ is the support of a $k$-path in
$\tilde D$ of length $\lambda_1'+\cdots+\lambda_k'-1$.
Thus $\nu=(\tilde\lambda)'$ and so $\tilde D$ is admissible.
\end{proof}
\begin{proposition}
\label{prop:3.2a}
Let $r\GEQ 2$, let $n\GEQ 2$,
let $\lambda=(\lambda_1,\ldots,\lambda_r)\vDash n$
be an $r$-part composition with $\lambda_r=1$.
Then the mapping $\theta_{*}\colon Y(\lambda)\to Y(\lambda_{*})$,
described in \cite[Section~4]{MPa17}, is a bijection.
\end{proposition}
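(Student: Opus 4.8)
The plan is to prove that $\theta_{*}$ is surjective: this suffices since $\theta_{*}$ is already known to be injective, and, as $\theta_{*}$ is induced by $\psi$ through $w_{D}\mapsto w_{D\psi}$, it is enough to show that $\psi$ maps $\mathcal{E}^{(\lambda)}$ \emph{onto} $\mathcal{E}^{(\lambda_{*})}$. So I would fix $E\in\mathcal{E}^{(\lambda_{*})}$; thus $E$ is admissible, $E=D(w_{E},\lambda_{*})$, and $w_{E}\in Y(\lambda_{*})$. Let $\tilde E$ be the diagram obtained by deleting the $(r+1)$-th row of $E$, which is a single node (the last part of $\lambda_{*}$ being $1$), say at column $c$. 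By Lemma~\ref{lem:3.1a}, applied to $\lambda_{*}$, $\tilde E$ is admissible in $\mathcal{D}^{(\lambda)}$; and $\tilde E=D(w_{\tilde E},\lambda)$, because in the construction of $D(w_{E},\lambda_{*})$ the minimal horizontal shifts are performed row by row from the top while the deleted row contributes only its single node, so $\tilde E$ is precisely the diagram built from the first $n$ entries of the row-form of $w_{E}$, and these are in the same relative order as the row-form of $w_{\tilde E}$. It then remains to prove $(\tilde E)\psi=E$ and $w_{\tilde E}\in Y(\lambda)$.

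The crux is an explicit description of $\psi$ on an arbitrary admissible $D\in\mathcal{D}^{(\lambda)}$, and this is where the hypothesis $\lambda_{r}=1$ is really used. Applying the ``moreover'' clause of Lemma~\ref{lem:3.1a} to $D$ itself, every $k$-path of $D$ of maximal length ($1\LEQ k\LEQ r'$) contains the single node $N$ on the $r$-th row, and since $N$ lies on the last row it is the final node of whichever constituent path contains it. A short argument with Result~\ref{res:6a} then identifies the $\psi$-column of $D$ as the column $c_{0}$ of $N$: on the one hand, $D\cup\{(r+1,c_{0})\}$ is admissible, since each maximal $k$-path of $D$ can be prolonged by $(r+1,c_{0})$ along its constituent path ending at $N$, giving a $k$-path with $\min\{k,\lambda_{i}\}$ nodes on each row $i\LEQ r$ and one node on row $r+1$; on the other hand, if $j<c_{0}$ then $D\cup\{(r+1,j)\}$ has no path of length $\lambda_{1}'+1$, because all longest paths of $D$ end at $N$, so it is not admissible. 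Thus $D\psi=D\cup\{(r+1,c_{0})\}$. As $N$ is both the bottom node of its column and the last node of $D$ in row-order, comparing the column-fillings of $D$ and $D\psi$ shows that the row-form of $w_{D\psi}$ is obtained from that of $w_{D}$ by adding $1$ to every entry exceeding the last entry of the row-form of $w_{D}$ and then appending that last entry increased by $1$. Consequently, viewing $N^-$ inside the root system of $S_{n+1}$,
\[
N^-(w_{D\psi})=N^-(w_{D})\,\cup\,\{\epsilon_{i}-\epsilon_{n+1}\colon\epsilon_{i}-\epsilon_{n}\in N^-(w_{D})\},
\]
so $N^-(w_{D\psi})$ is determined by $N^-(w_{D})$ in an order-preserving way: $N^-(w_{D})\subseteq N^-(w_{D'})$ implies $N^-(w_{D\psi})\subseteq N^-(w_{D'\psi})$. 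Hence $\psi$ respects the prefix order.

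Granting this, the two remaining claims should follow. For $(\tilde E)\psi=E$: since $E=\tilde E\cup\{(r+1,c)\}$ is an admissible extension of $\tilde E$, the column $c$ is at least the $\psi$-column $c_{\tilde E}$ of $\tilde E$, and a comparison of row-forms as above gives $N^-(w_{E})\subseteq N^-(w_{(\tilde E)\psi})$, i.e.\ $w_{E}$ is a prefix of $w_{(\tilde E)\psi}$; the latter lies in $Z(\lambda_{*})$ because $(\tilde E)\psi$ is admissible, and $w_{E}\in Y(\lambda_{*})$ is prefix-maximal, so $w_{E}=w_{(\tilde E)\psi}$; since $(\tilde E)\psi$ has no more columns than $E=D(w_{E},\lambda_{*})$, the unique diagram with that $w$-value attaining the minimal number of columns by \cite[Proposition~3.8]{MPa15}, we conclude $(\tilde E)\psi=E$. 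For $w_{\tilde E}\in Y(\lambda)$: $w_{\tilde E}\in Z(\lambda)$ because $\tilde E$ is admissible, so if $w_{\tilde E}\notin Y(\lambda)$ then, taking $z\in Z(\lambda)$ maximal in the prefix order among elements having $w_{\tilde E}$ as a proper prefix (possible as $Z(\lambda)$ is finite), we have $z\in Y(\lambda)$; putting $H=D(z,\lambda)\in\mathcal{E}^{(\lambda)}$, so that $H\psi\in\mathcal{E}^{(\lambda_{*})}$ and $w_{H\psi}\in Y(\lambda_{*})$, the order-preservation of $\psi$ makes $w_{E}=w_{(\tilde E)\psi}$ a prefix of $w_{H\psi}$, hence equal to it by maximality, so $H\psi=D(w_{H\psi},\lambda_{*})=D(w_{E},\lambda_{*})=E=(\tilde E)\psi$; injectivity of $\psi$ then forces $H=\tilde E$, whence $z=w_{H}=w_{\tilde E}$ --- a contradiction. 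Therefore $\tilde E\in\mathcal{E}^{(\lambda)}$ and $\theta_{*}(w_{\tilde E})=w_{(\tilde E)\psi}=w_{E}$; as $E$ was an arbitrary element of $\mathcal{E}^{(\lambda_{*})}$, $\theta_{*}$ is onto, hence bijective.

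The step I expect to be the main obstacle is the description of $\psi$ above --- in particular, pinning down the $\psi$-column of an admissible $D\in\mathcal{D}^{(\lambda)}$ as the column of the node on its $r$-th row. This is precisely what the hypothesis $\lambda_{r}=1$ buys (through Lemma~\ref{lem:3.1a} applied to $D$), and it is what collapses the threshold occurring in the row-form of $w_{D\psi}$ to a single number determined by $w_{D}$, which is what makes the compatibility of $\psi$ with the prefix order --- the engine behind both remaining claims --- transparent. The reduction to $\psi$, the identification $\tilde E=D(w_{\tilde E},\lambda)$, and the verification $(\tilde E)\psi=E$ are comparatively routine.
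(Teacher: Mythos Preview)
Your proof is correct and shares the paper's key insight: under the hypothesis $\lambda_r=1$, Lemma~\ref{lem:3.1a} forces every maximal $k$-path of an admissible $D\in\mathcal{D}^{(\lambda)}$ to pass through the unique node $N=(r,c_0)$ on the last row, from which one deduces $D\psi=D\cup\{(r+1,c_0)\}$. Where you diverge is in the mechanism for comparing $E$ with $(\tilde E)\psi$. The paper argues directly: writing $N=(r,a)$ and $N_*=(r+1,b)$ for the last two rows of $D_*=D(y,\lambda_*)$, it first gets $a\LEQ b$ from a single longest path, then observes that if $a<b$ the tableau $t^{\bar D}w_{D_*}$ (with $\bar D=D\cup\{(r+1,a)\}$) is standard, so by Result~\ref{res:3a} $w_{D_*}$ would be a proper prefix of $w_{\bar D}\in Z(\lambda_*)$, contradicting $w_{D_*}\in Y(\lambda_*)$. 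This is shorter than your route through the explicit formula for $N^-(w_{D\psi})$ and the weak-order characterisation ``$u$ is a prefix of $v$ iff $N^-(u)\subseteq N^-(v)$'' (a standard fact you invoke without citation). On the other hand, your approach buys something the paper glosses over: you explicitly verify $w_{\tilde E}\in Y(\lambda)$ via the prefix-compatibility of $\psi$, whereas the paper simply writes ``$y=w_D\theta_*$'' at the end, tacitly relying on further properties of $\psi$ from \cite{MPa17}. Your argument for $(\tilde E)\psi=E$ via minimal column count (\cite[Proposition~3.8]{MPa15}) is also sound, though again more circuitous than the paper's direct ``$w_{\bar D}\neq w_{D_*}$ since no column of $D$ is empty''.
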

\begin{proof}
Since $\theta_{*}$ is injective by \cite[Theorem~4.3]{MPa17}, we need
only prove that it is surjective.
Let $y\in Y(\lambda_{*})$ and let $D_{*}=D(y,\lambda_{*})$.
So $y=w_{D_{*}}$ and $D_{*}$ is admissible.
Thus, writing $\lambda'=(\lambda_1',\ldots,\lambda_{r'}')$, we see that $D_{*}$ has subsequence type
$(\lambda_{*})'=(\lambda_1'+1,\lambda_2',\ldots,\lambda_{r'}')$.
Let $N=(r,a)$ and $N_{*}=(r+1,b)$ be the nodes on the $r$-th and
$(r+1)$-th rows of $D_{*}$, and let $D=D_{*}-\{N_{*}\}$.
By Lemma~\ref{lem:3.1a}, $D$ is an admissible diagram in
$\mathcal{D}^{(\lambda)}$ and
if $\Pi_{*}$ is a $k$-path in $D_{*}$ of
length $\lambda_1'+\cdots+\lambda_k'+1$, then $\Pi_{*}$  contains $N_{*}$,
$1\LEQ k\LEQ r'$.
Again by Lemma~\ref{lem:3.1a},  $s(\Pi_{*})-\{N_{*}\}$ is the support of a $k$-path in
$D$ which contains $N$.
In particular, as $D_{*}$ has a 1-path of length $\lambda_1'+1$
containing both $N$ and $N_{*}$, $a\LEQ b$.
\par
We now construct a diagram $\bar D$ from $D$ by adding the node
$\bar N=(r+1,a)$ as the single node on the $(r+1)$-th row.
Since every $k$-path of length $\lambda_1'+\cdots+\lambda_k'$ in $D$
contains $N$, $1\LEQ k\LEQ r'$, each may be extended to a $k$-path
of length $\lambda_1'+\cdots+\lambda_k'+1$ in $\bar D$ by adding the
node $\bar N$.
Hence, the subsequence type $\nu$ of $\bar D$ satisfies
$(\lambda_{*})'\unlhd\nu$.
By Result~\ref{res:8a}(i), $\nu\unlhd(\lambda_{*})'$.
Hence, $\nu=(\lambda_{*})'$ and $\bar D$ is admissible.
From \cite[Section~4]{MPa17}, or equivalently from the paragraph
preceding Lemma~\ref{lem:3.1a}, $\bar D=D\psi$.
\par
If $a<b$ then $w_{\bar D}\neq w_{D_{*}}$ since no column of $D$ is
empty.
Since $t^{\bar D}w_{D_{*}}$ is a standard $\bar D$-tableaux and
$\bar D$ is admissible, it follows from Result~\ref{res:3a}
that $w_{D_{*}}\notin Y(\lambda_{*})$.
Since this is contrary to hypothesis, $a=b$, $D_{*}=\bar D=D\psi$.
So $y=w_{D_{*}}=w_D\theta_{*}$.
This concludes the proof.
\end{proof}
\par
Recall that the rim $Y(\lambda)$ of the right cell $\mathfrak C(\lambda)$ is given by $Y(\lambda)=\{w_D\colon D\in\mathcal E^{(\lambda)}\}$.
Thus, informally, we see that the elements of $Y(\lambda_{*})$, with
$\lambda$ as in Proposition~\ref{prop:3.2a}, are obtained from the elements
of $Y(\lambda)$ by constructing the diagrams in $\mathcal E^{(\lambda)}$, then forming the diagrams in $\mathcal E^{(\lambda_*)}$
by appending to each diagram in $\mathcal E^{(\lambda)}$ a new node in the column of the node on the
last row, and taking the corresponding `$w$' of the new diagrams.
In~\cite[Remark~4.4]{MPa17} it is described how this process  relates to the induction of cells (see~\cite[Proposition~3.15]{BVo83}).

We turn to deal with some special compositions and we begin with
the case of compositions in which at most the first
two parts are greater than 1.
\begin{theorem}
\label{thm:3.5a}
Let $r\GEQ3$ and $s\GEQ t\GEQ 1$.
Let $\lambda=(\lambda_1,\ldots,\lambda_r)$ be a composition where
$(\lambda_1,\lambda_2)$ is a permutation of $(s,t)$
and $\lambda_i=1$ if $i>2$.
\par
\begin{tabular}{rp{5.5in}}
(i) &
If $(\lambda_1,\lambda_2)=(s,t)$, then
$\mathcal E^{(\lambda)}=\mathcal E_s^{(\lambda)}=\{V(\lambda)\}$. 
\\[1ex]
(ii)  &
If $(\lambda_1,\lambda_2)=(t,s)$, then
$\mathcal E^{(\lambda)} =\{{D_{t,s,u}}\colon 1\LEQ u\LEQ s-t+1\}$,
where,
$D_{t,s,u}=\{(1,u)\}\cup
\{(1,i)\colon s-t+2\LEQ i\LEQ s\}\cup
\{(2,i)\colon 1\LEQ i\LEQ s\}\cup
\{(i,u)\colon 3\LEQ i\LEQ r\}$.
Hence, $\mathcal E^{(\lambda)}=\mathcal E_s^{(\lambda)}$.
Moreover, $|\mathcal E^{(\lambda)}|=s-t+1$.
\end{tabular}
\end{theorem}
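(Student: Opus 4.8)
The plan is to treat the two cases separately, in each case exhibiting explicitly which diagrams can occur in $\mathcal E^{(\lambda)}$ and which actually do. Throughout, write $\mu=\lambda'$; note that $\lambda'=(r,\underbrace{2,\dots,2}_{t-1},\underbrace{1,\dots,1}_{s-t})$ if $s>t$, with the obvious modification when $s=t$, so $\lambda'$ has $s$ parts. For part~(i), since $(\lambda_1,\lambda_2)=(s,t)$ with $s\GEQ t\GEQ 1$ and all later parts equal to $1$, the composition $\lambda$ is already a partition, so Remark~\ref{rem:2.2} gives $\mathcal E^{(\lambda)}=\mathcal E_s^{(\lambda)}=\{V(\lambda)\}$ directly, and there is nothing more to do.

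For part~(ii) the main work begins. First I would observe that $\dot\lambda=(1,\dots,1,s,t)$ is not obtained from $\lambda=(t,s,1,\dots,1)$ by the reverse operation in any helpful way, so instead I would induct on $r$ using Proposition~\ref{prop:3.2a}: since $\lambda_r=1$ (as $r\GEQ 3$), the map $\theta_*$ is a bijection $Y(\bar\lambda)\to Y(\lambda)$ where $\bar\lambda=(t,s,1,\dots,1)$ has $r-1$ parts. The base case is $r=3$, i.e.\ $\lambda=(t,s,1)$, which must be established by hand: here $\lambda'=(3,2,\dots,2,1,\dots,1)$, and one checks that the admissible diagrams $D\in\mathcal D^{(\lambda)}$ with $D=D(w_D,\lambda)$ are exactly the $D_{t,s,u}$ for $1\LEQ u\LEQ s-t+1$. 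The inductive step is then essentially bookkeeping: applying $\psi$ to $D_{t,s,u}\in\mathcal D^{(\bar\lambda)}$ appends a single new node on row $r$ in the column of minimal index making the result admissible, and one verifies this column is $u$ — i.e.\ that $\bar D=D_{t,s,u}$ with an extra node $(r,u)$ is admissible and no smaller column works — which reproduces exactly the claimed family for the $r$-part composition, with the index $u$ unchanged. That the new diagram is special (hence $\mathcal E^{(\lambda)}=\mathcal E_s^{(\lambda)}$) follows from the characterisation in Result~\ref{res:2a}(iii): in $D_{t,s,u}$, rows $3,\dots,r$ are all the singleton $\{(i,u)\}$ with the same column $u$, row $2$ is a full block of columns $1,\dots,s$, and row $1$ occupies columns $\{u\}\cup\{s-t+2,\dots,s\}$, all of which are columns of row $2$; so for any two nodes in distinct rows and columns, the appropriate "corner" node lies in row $2$ or in a matching singleton row. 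Since the $u$-values $1,\dots,s-t+1$ give $s-t+1$ genuinely distinct diagrams (distinct column of the singleton rows), we get $|\mathcal E^{(\lambda)}|=s-t+1$.

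The approach to the base case $r=3$, which I expect to be the real obstacle, is as follows. One must show: (a) each $D_{t,s,u}$, $1\LEQ u\LEQ s-t+1$, lies in $\mathcal E^{(\lambda)}$, and (b) no other diagram does. For (a): $D_{t,s,u}$ is special, hence admissible by Result~\ref{res:8a}(ii), hence $w_{D_{t,s,u}}\in Z(\lambda)$, and by Remark~\ref{rSpecial} it equals $D(w_{D_{t,s,u}},\lambda)$; so it suffices to see $w_{D_{t,s,u}}\in Y(\lambda)$, i.e.\ it is not a proper prefix of another element of $Z(\lambda)$. If it were, by Result~\ref{res:3a} there would be an admissible $G=D(w_G,\lambda)\in\mathcal D^{(\lambda)}$ with $t^Gw_{D_{t,s,u}}$ standard and $w_G\neq w_{D_{t,s,u}}$; I would derive a contradiction by a path-counting argument in the spirit of the proof of Theorem~\ref{thm:2.16a}, using the three long rows of $\lambda'$ to force the positions of nodes of $G$ and showing $G=D_{t,s,u}$. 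For (b): given any admissible $E=D(w_E,\lambda)$ with $w_E\in Y(\lambda)$, use that $E$ has subsequence type $\lambda'=(3,2,\dots,2,1,\dots,1)$; a $1$-path of length $3$ forces the single node of row $3$ to lie weakly south-east of a node of row $2$ weakly south-east of a node of row $1$, and the $k$-paths of lengths $3+2(k-1)$ for $2\LEQ k\LEQ t$ (using Result~\ref{res:6a}) force row $1$ to have exactly $t$ nodes in the columns it shares with row $2$, pinned to the last $t$ columns $\{s-t+1,\dots,s\}$ except possibly the node forced to sit above the row-$3$ node — which must therefore be $(1,u)$ for some $u$ with $1\LEQ u\LEQ s-t+1$, together with $\{(1,i):s-t+2\LEQ i\LEQ s\}$. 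Since $\mathcal E^{(\lambda)}$ then consists precisely of these diagrams, and each is forced to have row $2$ a full block (row-composition constraint) and row $3$ the singleton $(3,u)$, we obtain exactly $\{D_{t,s,u}:1\LEQ u\LEQ s-t+1\}$, completing the base case and hence, by the induction above, the theorem. The hardest single point is getting the column index of the node above the short columns to range over \emph{all} of $1,\dots,s-t+1$ and no more — the lower bound $u\GEQ 1$ is automatic, while $u\LEQ s-t+1$ comes from the requirement that rows $1$ and $2$ still admit the requisite $2$-path of length $s-t+3$... $t$-fold, i.e.\ that the $t$ shared columns of row $1$ fit to the right of column $u$.
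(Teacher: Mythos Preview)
Your overall strategy matches the paper's: part~(i) is immediate from Remark~\ref{rem:2.2}, and part~(ii) proceeds by induction on $r$ via Proposition~\ref{prop:3.2a}, with the base case $r=3$ handled directly. The inductive step and the verification that each $D_{t,s,u}$ is special are fine.

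The gap is in your treatment of the base case, specifically in step~(b). You want to show that any $D\in\mathcal E^{(\lambda)}$ for $\lambda=(t,s,1)$ equals some $D_{t,s,u}$, and your argument invokes Result~\ref{res:6a} to constrain the positions of the nodes. But Result~\ref{res:6a} only tells you how many nodes of a maximal $k$-path lie on each row; it says nothing about which \emph{columns} they occupy. Your claim that the nodes of row~1 are ``pinned to the last $t$ columns $\{s-t+1,\dots,s\}$'' presupposes that row~2 occupies exactly columns $1,\dots,s$ --- equivalently, that $D$ has exactly $s$ columns --- and your justification (``row-composition constraint'') only gives that row~2 has $s$ nodes, not where they sit. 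What you need here is precisely that $D$ is special, and this is not automatic from admissibility together with $D=D(w_D,\lambda)$.

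The paper fills this gap using the ordered $k$-path machinery of Section~\ref{sec:OrdPaths}. Starting from a $t$-path in $D$ of length $2t+1$, Theorem~\ref{thm:2.14a} lets one take it to be ordered; a short counting argument shows it consists of exactly one path of length~3 and $t-1$ paths of length~2; the remaining $s-t$ nodes (all on row~2) are then adjoined as length-one paths via Corollary~\ref{cor:3.4a}, producing an ordered $s$-path of type $\lambda'$ with support all of $D$; and Proposition~\ref{propOrdSpecial}(ii) then forces $D$ to be special. Once speciality is in hand, the identification $D=D_{t,s,u}$ goes through, and the paper streamlines this last step further via Corollary~\ref{lemmaSpecial}. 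Your part~(a) can also be simplified: once (b) gives $\mathcal E^{(\lambda)}\subseteq\{D_{t,s,u}:1\LEQ u\LEQ s-t+1\}$, it suffices to observe that the $w_{D_{t,s,u}}$ are mutually non-prefixes of one another (each lies in $Z(\lambda)$, hence is a prefix of some element of $Y(\lambda)$, which by (b) must be one of the $w_{D_{t,s,u'}}$), rather than running a separate contradiction argument for each~$u$.
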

\begin{proof}
If $(\lambda_1,\lambda_2)=(s,t)$ then $\lambda$ is a partition and this case is covered in Remark~\ref{rem:2.2}.

Now let $s\GEQ t\GEQ 1$ and suppose that $\lambda=(t,s,1)$.
Then $\lambda'$ is the partition $3^12^{t-1}1^{s-t}$.
Let $d\in Y(\lambda)$ and let $D=D(d,\lambda)$ so that $d=w_D$.
Since $d\in Z(\lambda)$, $D$ is an admissible diagram.
It follows that $D$ contains a $t$-path $\Pi'$ of length $2t+1$.
We can assume that $\Pi'$ is ordered in view of Theorem~\ref{thm:2.14a}.

Since $D$ has exactly 3 rows, all constituent paths of $\Pi'$ have length $\LEQ3$.
Moreover, at least one of the constituent paths of $\Pi'$ has length 3 (otherwise the length of $\Pi'$ would be $\LEQ2t$).
Since $D$ has exactly one node in the third row, exactly one of the constituent paths of $\Pi'$ has length 3 (alternatively this can be deduced from the fact that $D$ is of subsequence type $\lambda'$).
Let $\pi$ be the unique path in $\Pi'$ of length 3.
Then $\pi$  contains one node in each one of the three rows of $D$.
It follows that every one of the remaining $t-1$ paths of $\Pi'$ has length 2 and contains one node in each of the first two rows of $D$.

The remaining $s-t$ nodes of $D$ (the size of $D$ is $s+t+1$) all belong to the second row of~$D$.
Clearly the nodes of $\pi$ which are located in the first and third rows of $D$ cannot both have column index which equals the column index of any of these $s-t$ nodes.
Hence, by Corollary~\ref{cor:3.4a}, we may extend $\Pi'$
to an ordered $s$-path $\Pi$ by
the $s-t$ paths of length 1 whose nodes are
the remaining nodes on the second row of $D$.
Clearly $\Pi$ has type $\lambda'$ and the support of $\Pi$ is the diagram $D$.
By Proposition~\ref{propOrdSpecial}(ii), $D$ is special.
In particular, since $D$ is a rearrangement of a Young diagram, every column of $D$ contains a node located in the second row of $D$ (so the nodes in the second row of $D$ are the nodes $(2,r)$ for $1\LEQ r\LEQ s$).

Suppose that the first node on the first row of $D\,(=D(d,\lambda))$ is $(1,u)$.
Since the first row of $D$  has $t$ nodes, $1\LEQ u\LEQ s-t+1$.
Form a diagram $F$ whose first row nodes are $(1,u)$, $(1,s-t+2)$,
\ldots, $(1,s)$, whose second row nodes are the same as $D$, and
whose single third row node is $(3,u)$.
Then $F\in\mathcal{D}^{(\lambda)}$, $F$ is special, hence admissible, and
$t^Fw_D$ is a standard $F$-tableau.
By Corollary~\ref{lemmaSpecial},  $D=F$.
We may refer to the admissible diagram $F$ just constructed as $F_u$.
It is also clear the $w_{F_u}$, for $1\LEQ u\LEQ s-t+1$ are mutually
non-prefixes of one another.
It follows that $\mathcal E^{(\lambda)}=\mathcal E_s^{(\lambda)}=\{{F_u}\colon 1\LEQ u\LEQ s-t+1\}$.
In particular $|\mathcal E^{(\lambda)}|=s-t+1$.

To complete the proof for $r>3$, we have only to use
Proposition~\ref{prop:3.2a} and the remarks following its proof.
\end{proof}

\begin{remark}
Combining Theorem~\ref{thm:3.5a}  with Remark~\ref{res:10a} we can determine $\mathcal E^{(\lambda)}$ for $\lambda=(1^r,a,b)$ where $r,a,b\GEQ 1$.
\end{remark}

Before we consider all compositions with three parts,
we describe some special (and hence admissible) diagrams which we will
use in Theorem~\ref{thm:3.7a}.
\begin{example}
\label{ex:3.6a}
Let $s\GEQ t\GEQ u\GEQ 1$.
\par
(i): 
If $\lambda=(s,u,t)$ and $C\subseteq\{1,\ldots,t\}$ with $|C|=u$,
then $F_{C}=\{(1,i)\colon 1\LEQ i\LEQ s\}\cup
\{(2,i)\colon i\in C\}\cup\{(3,i)\colon 1\LEQ i\LEQ t\}$ is a special
diagram. 
\par
If $\lambda=(8,3,5)$ and $C=\{2,3,4\}$, then $F_{C}$ is the diagram
$\setlength{\arraycolsep}{3pt}
\begin{array}{*{8}{c}}
\times&\times&\times&\times&\times&\times&\times&\times\\
      &\times&\times&\times&      &      &      &      \\
\times&\times&\times&\times&\times&      &      &      \\
\end{array}$
\par
(ii): 
If $\lambda=(t,s,u)$ and $C\subseteq\{1,\ldots,s-t+u\}$ with
$|C|=u$, then $G_{C}=\{(1,i)\colon i\in (C\cup\{s-t+u+1,\ldots,s\})\}\cup
\{(2,i)\colon 1\LEQ i\LEQ s\}\cup\{(3,i)\colon i\in C\}$ is a special
 diagram.
\par
If $\lambda=(5,8,3)$ and $C=\{2,4,5\}$, then $G_{C}$ is the diagram
$
\setlength{\arraycolsep}{3pt}
\begin{array}{*{8}{c}}
      &\times&      &\times&\times&      &\times&\times\\
\times&\times&\times&\times&\times&\times&\times&\times\\
      &\times&      &\times&\times&      &      &      \\
\end{array}$
\par
(iii): 
If $\lambda=(t,u,s)$ and $C\subseteq\{s-t+1,\ldots,s\}$ with
$|C|=u$, then $H_{C}=\{(1,i)\colon s-t+1\LEQ i\LEQ s\}\cup
\{(2,i)\colon i\in C\}\cup\{(3,i)\colon 1\LEQ i\LEQ s\}$ is a special
 diagram.
\par
If $\lambda=(5,3,8)$ and $C=\{5,6,8\}$, then $H_{C}$ is the diagram
$\setlength{\arraycolsep}{3pt}
\begin{array}{*{8}{c}}
      &      &      &\times&\times&\times&\times&\times\\
      &      &      &      &\times&\times&      &\times\\
\times&\times&\times&\times&\times&\times&\times&\times\\
\end{array}$
\par
(iv): 
If $\lambda=(u,s,t)$ and $C\subseteq\{t-u+1,\ldots,s\}$ with $|C|=u$,
then $K_{C}=\{(1,i)\colon i\in C\}\cup\{(2,i)\colon 1\LEQ i\LEQ s\}
\cup\{(3,i)\colon i\in(\{1,\ldots,t-u\}\cup C)\}$ is a special
 diagram.
\par
If $\lambda=(3,8,5)$ and $C=\{3,5,7\}$, then $K_{C}$ is the diagram
$\setlength{\arraycolsep}{3pt}
\begin{array}{*{8}{c}}
      &      &\times&      &\times&      &\times&      \\
\times&\times&\times&\times&\times&\times&\times&\times\\
\times&\times&\times&      &\times&      &\times&      \\
\end{array}$
\par
(v): 
If $\lambda=(u,t,s)$ then $L=\{(1,i)\colon s-u+1\LEQ i\LEQ s\}
\cup\{(2,i)\colon s-t+1\LEQ i\LEQ s\}\cup\{(3,i)\colon 1\LEQ i\LEQ s\}$ is
a special  diagram.
\par
If $\lambda=(3,5,8)$, then $L$ is the diagram
$\setlength{\arraycolsep}{3pt}
\begin{array}{*{8}{c}}
      &      &      &      &      &\times&\times&\times\\
      &      &      &\times&\times&\times&\times&\times\\
\times&\times&\times&\times&\times&\times&\times&\times\\
\end{array}$
\end{example}
\begin{theorem}
\label{thm:3.7a}
Let $s\GEQ t\GEQ u\GEQ 1$ and let
$\lambda=(\lambda_1,\lambda_2,\lambda_3)$ be a composition which is a
permutation of $(s,t,u)$.
Then, with $|C|=u$ and  diagrams $G_C$ and $H_C$ as described in Example~\ref{ex:3.6a}, \\[1ex]
$\begin{array}{lll}
\mathcal E^{(\lambda)} & = & 
\left\{
\begin{array}{lll}
\{{V(\lambda)}\}
&\mbox{ if }&\lambda=(s,t,u),
\\[1ex]
\{{G_{C}}\colon C\subseteq\{1,\ldots,s-t+u\}\}
&\mbox{ if }&\lambda=(t,s,u),
\\[1ex]
\{{H_{C}}\colon C\subseteq\{s-t+1,\ldots,s\}\}
&\mbox{ if }&\lambda=(t,u,s),
\\[1ex]
\{\dot M\colon M\in\mathcal E^{(\dot\lambda)}\}
&\mbox{ if }&\lambda=(u,t,s),\ (u,s,t)\mbox{ or }(s,u,t).
\end{array}
\right.
\end{array}$
\\[1ex]
Moreover, in all cases, $\mathcal E^{(\lambda)}=\mathcal E_s^{(\lambda)}$ and
the value of $|\mathcal E^{(\lambda)}|$ is given in Table~\ref{tbl:2a}.
\end{theorem}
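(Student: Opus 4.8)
The plan is to follow the pattern of the proof of Theorem~\ref{thm:3.5a}. Only three of the six orderings require a genuine argument, namely $\lambda=(s,t,u)$, $\lambda=(t,s,u)$ and $\lambda=(t,u,s)$; the remaining three follow at once from Remark~\ref{res:10a}, since for $\lambda=(u,t,s)$, $(u,s,t)$, $(s,u,t)$ the reverse composition $\dot\lambda$ equals $(s,t,u)$, $(t,s,u)$, $(t,u,s)$ respectively and $D\mapsto\dot D$ is a specialness-preserving bijection $\mathcal E^{(\dot\lambda)}\to\mathcal E^{(\lambda)}$ --- this gives precisely the fourth line of the displayed formula, and also $\mathcal E^{(\lambda)}=\mathcal E_s^{(\lambda)}$ in those cases. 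When $\lambda=(s,t,u)$, $\lambda$ is a partition and Remark~\ref{rem:2.2} gives $\mathcal E^{(\lambda)}=\mathcal E_s^{(\lambda)}=\{V(\lambda)\}$. In the two remaining cases the conjugate is the same partition, $\lambda'=(3^u,2^{t-u},1^{s-t})$, which has $s$ parts, the first $u$ equal to~$3$; I would record this at the outset.

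\textbf{Producing a special diagram.} Take $\lambda=(t,s,u)$, let $d\in Y(\lambda)$ and $D=D(d,\lambda)$, so $d=w_D$ and $D$ is admissible of subsequence type~$\lambda'$. First I would extract the skeleton of $D$: by admissibility there is a $t$-path $\Pi_t$ in $D$ of length $\lambda_1'+\cdots+\lambda_t'=u+2t$, which by Theorem~\ref{thm:2.14a} may be taken ordered. Since $D$ has three rows, each constituent path has length $\LEQ3$; by Result~\ref{res:6a}, $\Pi_t$ meets every node of rows~$1$ and~$3$ and exactly $t$ nodes of row~$2$. A short count --- if $x,y,z$ count the constituent paths of lengths $3,2,1$ then $x+y+z=t$ and $3x+2y+z=u+2t$, so $x=u+z$, while the $x$ length-$3$ paths meet $x$ distinct nodes of row~$3$, which has only $u$ --- forces $z=0$, $x=u$, $y=t-u$; thus $\Pi_t$ has $u$ length-$3$ paths, each meeting all three rows, and $t-u$ length-$2$ paths, each meeting rows~$1$ and~$2$. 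By Corollary~\ref{cor:3.4a} the $s-t$ remaining nodes of row~$2$, taken as paths of length one, can be inserted into $\Pi_t$ to produce an ordered $s$-path $\Pi$ of type~$\lambda'$ with $s(\Pi)=D$ (a constituent path of $\Pi_t$ straddling such a node in its own column would, being squeezed between rows, contain that row-$2$ node, forcing it into $\Pi_t$). By Proposition~\ref{propOrdSpecial}(ii), $D$ is special. It follows that $D$ has exactly $s$ columns, its second row is full, and --- by Result~\ref{res:2a}(iii) --- the columns of its third row lie among the columns of its first row; equivalently, each column of $D$ is of exactly one of the types ``all three rows'' ($u$ of them), ``rows $1,2$'' ($t-u$), ``row $2$ only'' ($s-t$).

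\textbf{Pinning down the diagram.} Now I would use that $d$ is a prefix-maximal element of $Z(\lambda)$. The claim is that the ``rows $1,2$'' columns of $D$ are exactly its last $t-u$ columns; equivalently, with $C$ the set of columns of the third row of $D$, that $C\subseteq\{1,\dots,s-t+u\}$ and $D=G_C$ (Example~\ref{ex:3.6a}(ii)). If not, there are a ``rows $1,2$'' column $a\LEQ s-t+u$ and a column $b>s-t+u$ not of that type, so either $b$ is a ``row $2$ only'' column, and one moves the first-row node from column $a$ to column $b$, or $b$ is an ``all three rows'' column, and one moves the third-row node from column $b$ to column $a$. In either case the resulting diagram $D'$ is again a special diagram in $\mathcal D^{(\lambda)}$ with $D'\ne D$, so $w_{D'}\in Z(\lambda)$ and $w_{D'}\ne w_D$ (else $D=D(w_D,\lambda)=D(w_{D'},\lambda)=D'$ by Remark~\ref{rSpecial}, as both are special); a direct check shows the bijection $\theta_{D',D}$ is column-monotone, hence $t^{D'}w_D$ is standard and $w_D$ is a prefix of $w_{D'}$ by Result~\ref{res:3a}, contradicting $d\in Y(\lambda)$. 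Thus $\mathcal E^{(\lambda)}\subseteq\{G_C\colon C\subseteq\{1,\dots,s-t+u\},\ |C|=u\}$. For the reverse inclusion one checks --- e.g.\ by applying Proposition~\ref{prop:2.3a} to the $u$-path of $G_{C'}$ formed by its ``all three rows'' columns --- that the $w_{G_C}$ are pairwise non-prefixes of one another; since each lies in $Z(\lambda)$ and every element of $Z(\lambda)$ is a prefix of some element of $Y(\lambda)\subseteq\{w_{G_C}\}$, equality follows, and all these diagrams are special. The case $\lambda=(t,u,s)$ runs identically: the skeleton argument now yields a special diagram with full \emph{third} row, and prefix-maximality forces the first-row columns to be the last $t$ columns, giving the diagrams $H_C$ of Example~\ref{ex:3.6a}(iii). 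Finally $|\mathcal E^{(\lambda)}|$ is read off by counting admissible parameters: $1$, $\binom{s-t+u}{u}$, $\binom{t}{u}$ for $(s,t,u)$, $(t,s,u)$, $(t,u,s)$, and the same values for $(u,t,s)$, $(u,s,t)$, $(s,u,t)$ by Remark~\ref{res:10a}; this is the content of Table~\ref{tbl:2a}.

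\textbf{Main obstacle.} Everything up to and including ``$D$ is special'' is routine given Theorem~\ref{thm:2.14a}, Result~\ref{res:6a}, Corollary~\ref{cor:3.4a} and Proposition~\ref{propOrdSpecial}. The real work is the pinning-down step: choosing, for a special diagram that is not a $G_C$ (resp.\ $H_C$), the right single node to slide, and then verifying that the induced $\theta_{D',D}$ is column-monotone --- i.e.\ that under $\theta_{D',D}$ the column index of a node never increases on rows above the moved node and never decreases on rows below it. This is the same endgame as in the final paragraphs of the proof of Theorem~\ref{thm:3.5a}, but because the third row now carries $u\ge1$ nodes rather than one there are the two slide-cases above and the column bookkeeping is correspondingly heavier.
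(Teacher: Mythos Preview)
Your proposal is correct and follows the paper's proof closely up through the point where $D$ is shown to be special: the same reduction to three cases via Remarks~\ref{rem:2.2} and~\ref{res:10a}, the same ordered $t$-path $\Pi_t$ via Theorem~\ref{thm:2.14a}, the same count $z_3=u$, $z_2=t-u$, $z_1=0$, the same extension by the leftover row-$i_1$ nodes via Corollary~\ref{cor:3.4a}, and the same appeal to Proposition~\ref{propOrdSpecial}(ii).

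Where you diverge is the ``pinning down'' step. The paper does not argue by a single-node slide and contradiction. Instead, given the special diagram $D$ with row-1 column set $A$ and row-3 column set $B\subseteq A$, it \emph{directly names} the target: for $\lambda=(t,s,u)$ it lets $C$ be the first $u$ elements of $A$ (a one-line pigeonhole gives $C\subseteq\{1,\dots,s-t+u\}$), observes that passing from $D$ to $G_C$ simultaneously pushes the row-3 entries leftwards and the last $t-u$ row-1 entries rightwards, and checks in one breath that $t^{G_C}w_D$ is standard; then Corollary~\ref{lemmaSpecial} gives $D=G_C$. The case $\lambda=(t,u,s)$ is handled the same way with $H_C$. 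For non-prefixing of distinct $G_{C_1},G_{C_2}$ the paper simply asserts that $t^{G_{C_1}}w_{G_{C_2}}$ is visibly non-standard; your route via Proposition~\ref{prop:2.3a} applied to the column $u$-path of $G_{C'}$ is a nice alternative and in fact makes the ``immediate'' explicit.

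Your approach is sound (the two slide-cases do preserve specialness and do make $\theta_{D',D}$ column-monotone in the required sense, because replacing a row-1 column by a larger one raises every order statistic while replacing a row-3 column by a smaller one lowers them), but it trades a single application of Corollary~\ref{lemmaSpecial} for a case split plus the specialness check on $D'$. The paper's one-shot construction is shorter; yours is more local and perhaps easier to discover without knowing the answer in advance.
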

\begin{proof}
Choose distinct $i_1,i_2,i_3\in\{1,2,3\}$ so that $\lambda_{i_1}=s$,
$\lambda_{i_2}=t$ and $\lambda_{i_3}=u$.
Also let $\lambda$ be as in the statement of the theorem.
\par
Let $d\in Y(\lambda)$ and let $D=D(d,\lambda)$.
Then $D$ is an admissible diagram; so $D$ has subsequence type
$\lambda'$.
%
%
%
Since $\lambda'=3^u2^{t-u}1^{s-t}$ and
$D$ is admissible, it has a $t$-path
$\Pi'=(\pi_1',\ldots,\pi_t')$ of length $2t+u$, which we may assume
to be ordered by Theorem~\ref{thm:2.14a}.
For $1\LEQ i\LEQ 3$, let $z_i'$ be the number of paths of length $i$
in $\Pi'$.
Since each path of length 3 has a node on each row, $z_3'\LEQ u$.
Simple counting gives $z_1'+z_2'+z_3'=t$ and $z_1'+2z_2'+3z_3'=2t+u$.
Hence, $z_2'+2z_3'=t+u$.
Also, $z_2'+z_3'\LEQ t$.
Hence, $z_3'\GEQ u$.
So, $z_3'=u$, $z_1'=0$ and $z_2'=t-u$.
\par
The $u$ paths of $\Pi'$ of length 3 together contain all nodes on
row $i_3$.
The remaining $t-u$ paths of $\Pi'$ have length 2 but have no nodes
on row $i_3$. Hence, these paths contain all remaining $t-u$ nodes
on row $i_2$.
\par
The $s-t$ remaining nodes of $D$ are on row $i_1$.
None of these may be used to extend a path of length 2 in $\Pi'$ to
a path of length 3; otherwise, $D$ would have a $t$-path of length
$\GEQ 2t+u+1$.
Moreover, if $i_1=2$ and $\pi$ is a path of length 3 in $\Pi$, it is clear that the nodes of $\pi$ in rows $i_2$ and $i_3$ cannot both have column index which equals the column index of any of these $s-t$ nodes of $D$ on row $i_1$.
Hence, by Corollary~\ref{cor:3.4a}, we may extend the ordered $t$-path
$\Pi'$ by the $s-t$ paths of length one, whose nodes are these
remaining nodes on row $i_1$, to an ordered $s$-path
$\Pi=(\pi_1,\ldots,\pi_s)$ of length $s+t+u$.
Clearly, $\Pi$ has $z_i$ paths of length $i$ where $z_1=s-t$,
$z_2=t-u$, and $z_3=u$.
Since the support of $\Pi$ is the whole of $D$ and $\Pi$ has type $\lambda'$, $D$ is special by Proposition~\ref{propOrdSpecial}(ii).
So, $\mathcal E^{(\lambda)}=\mathcal E_s^{(\lambda)}$.
\par
Let $A$ and $B$ be the sets of columns in $D$ containing nodes on rows
$i_2$ and $i_3$, respectively.
Since $D$ is special, the nodes on row $i_1$ are in columns 1, \ldots, $s$ and, in addition, $B\subseteq A\subseteq \{1,\ldots,s\}$.

Consider first the case $\lambda=(t,s,u)$.
Let $C$ be the first $u$ columns in $A$.
Since $|A-C|=t-u$, $C\subseteq\{1,\ldots,s-t+u\}$.
Then $t^{G_{C}}w_D$, which is obtained from $t_D$ by moving the
entries on the third row to the left into the corresponding positions
in $G_{C}$ and moving the last $t-u$ entries on the first row
to the right into the last $t-u$ columns, is a standard
$G_{C}$-tableau.
Since $G_{C}$ is special, $D=G_C$ by Corollary~\ref{lemmaSpecial}.

Every $u$-subset $C$ of $\{1,\ldots,s-t+u\}$ gives rise to an admissible
 diagram $G_{C}\in \mathcal E_s^{(\lambda)}$.
Moreover, if $C_1$ and $C_2$ are distinct $u$-subsets of
$\{1,\ldots,s-t+u\}$, then it is immediate that
$t^{G_{C_1}}w_{G_{C_2}}$ is not a standard tableau.
Hence,
$\mathcal E^{(\lambda)}=\mathcal E_s^{(\lambda)}=
\{ {G_{C}}\colon
C\mbox{ a } u\mbox{-subset of }\{1,\ldots,s-t+u\}\}$
and $|\mathcal E^{(\lambda)}|=\dbinom{s-t+u}{u}$.
\par
%
%
Now consider $\lambda=(t,u,s)$.
Let $\tilde A=\{s-t+1,\ldots,s\}$ and let $C$ be the subset of
$\tilde A$ whose elements occupy the same positions in $\tilde A$ as
those of $B$ occupy in $A$.
Then $t^{H_{C}}w_D$, which is obtained from $t_D$ by moving the
entries on the first row to the right into the columns given by
$\tilde A$ and the entries in the second row to the columns given by
$C$, is a standard $H_{C}$-tableau.
By Corollary~\ref{lemmaSpecial}, $D=H_{C}$ since $H_C$ is special.
\par
Every $u$-subset $C$ of $\{s-t+1,\ldots,s\}$ gives rise to an admissible
 diagram $H_{C}\in \mathcal E_s^{(\lambda)}$.
Moreover, if $C_1$ and $C_2$ are distinct $u$-subsets of
$\{s-t+1,\ldots,s\}$, then it is immediate that
$t^{H_{C_1}}w_{H_{C_2}}$ is not a standard tableau.
Hence,
$\mathcal E^{(\lambda)}=\mathcal E_s^{(\lambda)}=
\{ {H_{C}}\colon
C\mbox{ a } u\mbox{-subset of }\{s-t+1,\ldots,s\}\}$
and $|\mathcal E^{(\lambda)}|=\dbinom{t}{u}$.
\par
%
%

Next consider $\lambda=(s,t,u)$.
Then $\lambda$ is a partition and this case is covered in Remark~\ref{rem:2.2}.

Finally, the  diagrams in $\mathcal E^{(\lambda)}$
for $\lambda=(u,s,t)$, $(s,u,t)$ and $(u,t,s)$ are obtained by rotating
through $180^{\circ}$ those in $\mathcal E^{(\mu)}$ for $\mu=(t,s,u)$,
$(t,u,s)$ and $(s,t,u)$, respectively (see Remark~\ref{res:10a}).
Since $\mathcal E^{(\lambda)}=\mathcal E_s^{(\lambda)}$ for these cases also,
$|\mathcal E^{(\lambda)}|= \dbinom{s-t+u}{u}$ or $\dbinom{t}{u}$ or $1$
according as $\lambda=(u,s,t)$ or $(s,u,t)$ or $(u,t,s)$.
This completes the proof.
\end{proof}
\par
Direct arguments can also be given for the cases of $\lambda=(s,u,t)$,
$(u,s,t)$ and $(u,t,s)$. The diagrams in $\mathcal E^{(\lambda)}$ arising in these cases are described in
Examples~\ref{ex:3.6a} (i), (iv) and (v).
\begin{table}[h]
\centering
$\begin{array}{ccccccccccccc}
\lambda & (s,t,u) & (s,u,t) & (t,s,u) & (t,u,s) & (u,s,t) & (u,t,s)
\\\hline
\\[-1ex]
|\mathcal E^{(\lambda)}| & 1 & \dbinom{t}{u} & \dbinom{s-t+u}{u} & \dbinom{t}{u}
& \dbinom{s-t+u}{u} & 1
\end{array}$
\caption{
$s\GEQ t\GEQ u\GEQ 1$. (Theorem~\ref{thm:3.7a})
}
\label{tbl:2a}
\end{table}

We conclude this section with a theorem in which
$\mathcal E^{(\lambda)}$ and $\mathcal E_s^{(\lambda)}$ are determined for the family
of compositions $\lambda=(1,2^{r-2},1)$ and, as it turns out, $\mathcal E^{(\lambda)}\ne\mathcal E_s^{(\lambda)}$ for $r>3$.
We begin by identifying certain admissible diagrams for such
compositions
\begin{example}
\label{ex:3.14a}
Let $r\GEQ3$ and let $\lambda=(1,2^{r-2},1)$.
Define $P^{(0)}=\{(i,1)\colon 1\LEQ i\LEQ r\}\cup
\{(i,2)\colon 2\LEQ i\LEQ r-1\}$, 
$P^{(r-2)}=\{(i,1)\colon 2\LEQ i\LEQ r-1\}\cup\{(i,2)\colon 1\LEQ i\LEQ r\}$ and, for $1\LEQ v\LEQ r-3$,
define
$P^{(v)}=\{(i,1)\colon 2\LEQ i\LEQ v+1\}\cup
\{(i,2)\colon 1\LEQ i\LEQ r\}\cup\{(i,3)\colon v+2\LEQ i\LEQ r-1\}$.
Thus,
\begin{center}
$
\setlength{\arraycolsep}{0pt}
{\small
\begin{array}{cccc}
P^{(0)} =
\begin{array}{cc}
 \times &        \\
 \times & \times \\
 \vdots & \vdots \\
 \times & \times \\
 \times & \times \\
 \vdots & \vdots \\
 \times & \times \\
 \times &        \\
 \end{array},\quad
&
P^{(r-2)}=
\begin{array}{cc}
& \times\\
\times & \times \\
 \vdots & \vdots \\
 \times & \times \\
 \times & \times \\
 \vdots & \vdots \\
 \times & \times \\
& \times
\end{array},\quad
&
P^{(v)} =
\begin{array}{ccc}
        & \times &  \\
 \times & \times &  \\
 \vdots & \vdots &  \\
 \times & \times &  \\
        & \times & \times \\
        & \vdots & \vdots \\
        & \times & \times \\
        & \times &  \\
 \end{array}
&
\makebox[1.5cm]{\hspace{\fill} for }1\LEQ v\LEQ r-3.
\end{array}
}
$
\end{center}
Since $\lambda'=(r,r-2)$ and clearly each of these diagrams $P^{(v)}$, $0\LEQ v\LEQ r-2$, has a
path of length $r$ and a 2-path containing all $2r-2$ nodes,
they are all admissible.
Moreover,  $P^{(v)}=D(w_{P^{(v)}},\lambda)$ for $0\LEQ v\LEQ r-2$.
The tableaux $t_v=t_{P^{(v)}}$ are given by
{\footnotesize
\begin{center}
$
\setlength{\arraycolsep}{3pt}
\begin{array}{cccc}
t_{0} =
\begin{array}{cc}
 1          &                \\
 2          & r\!+\!1        \\
 \vdots     & \vdots         \\
 i          & r\!+\!i\!-\!1  \\
 i\!+\!1    & r\!+\!i        \\
 \vdots     & \vdots         \\
 r\!-\!1    & 2r\!-\!2       \\
 r          &                \\
 \end{array},\quad
&
t_{(r-2)}=\begin{array}{cc}
& r-1\\
1 & r\\
\vdots & \vdots\\
i & r+i-1\\
i+1 & r+i\\
\vdots & \vdots\\
r-2 & 2r-3\\
& 2r-2
\end{array}
,\quad
&
t_v =
\begin{array}{ccc}
        & v\!+\!1        &                \\
 1      & v\!+\!2        &                \\
 \vdots & \vdots         &                \\
 v      & 2v\!+\!1       &                \\
        & 2v\!+\!2       & v\!+\!r\!+\!1  \\
        & \vdots         & \vdots         \\
        & v\!+\!r\!-\!1  & 2r\!-\!2       \\
        & v\!+\!r        &                \\
 \end{array}\quad
&
{\rm for }\ 1\LEQ v\LEQ r-3.
\end{array}
$
\end{center}
}
The permutations $g_v=w_{P^{(v)}}$, $0\LEQ v\LEQ r-2$, are given by
\par\vspace{-2ex}
\begin{center}
$\begin{array}{l}
g_{0}=[1,2,r+1,3,r+2,\ldots,r-1,2r-2,r]
,
\\[1ex]
g_{(r-2)}=[r-1,1,r,2,r+1,\ldots,r-2,2r-3,2r-2],\quad {\rm and\ for}\quad 1\LEQ v\LEQ r-3,\\[1ex]
g_v=[v\!+\!1,1,v\!+\!2,2,\ldots,v,2v\!+\!1,  2v%
\!+\!2,v\!+\!r\!+\!1,\ldots,v\!+\!r\!-\!1,2r\!-\!2,v\!+\!r].
\end{array}$
\end{center}
\par\vspace{-2ex}
Since $P^{(v)}$ is admissible, $g_v\in Z(\lambda)$, $0\LEQ v\LEQ r-2$,
and it is immediate from a consideration of the tableaux $t_v$,
that $g_v$ is not a prefix of $g_{v'}$ if $v\neq v'$.
\end{example}
\begin{theorem}
\label{prop:3.15a}
Let $r\GEQ 3$, $n=2r-2$ and let $\lambda$ be the composition
$(1,2^{r-2},1)$ of $n$.
Then $\mathcal E^{(\lambda)}=\{P^{(v)}\colon 0\LEQ v\LEQ r-2\}$, where $P^{(v)}$ is as
described in Example~\ref{ex:3.14a}, and $\mathcal E_s^{(\lambda)}=\{P^{(0)},P^{(r-2)}\}$.
So, $|\mathcal E_s^{(\lambda)}|=2$ and $|\mathcal E^{(\lambda)}|=r-1$.
\end{theorem}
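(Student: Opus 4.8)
The plan is to prove first the inclusion $\mathcal E^{(\lambda)}\subseteq\{P^{(v)}\colon 0\LEQ v\LEQ r-2\}$ — the heart of the matter — and then deduce equality, the count, and the description of $\mathcal E_s^{(\lambda)}$. For the inclusion, let $d\in Y(\lambda)$ and set $D=D(d,\lambda)$; then $D$ is admissible, hence of subsequence type $\lambda'=(r,r-2)$. I would draw two $k$-paths out of $D$. First, a $1$-path of length $r$: by Result~\ref{res:6a} it meets rows $1$ and $r$ and contains exactly one node of each of rows $2,\dots,r-1$, and being a path it forces $a\LEQ b$, where $(1,a)$ and $(r,b)$ are the unique nodes of rows $1$ and $r$. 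Second, using Theorem~\ref{thm:2.14a}, an ordered $2$-path $\Pi=(\tau_1,\tau_2)$ with $s(\tau_1)\prec s(\tau_2)$ and $s(\Pi)=D$ (its length being $2r-2=n$). Since $\tau_1,\tau_2$ are paths they meet each row at most once, and as they cover $D$ each contains exactly one node of each middle row; comparing the two nodes of a middle row through $s(\tau_1)\prec s(\tau_2)$ shows $\tau_1$ takes the left node and $\tau_2$ the right node of that row. The nodes $(1,a)$ and $(r,b)$ are each taken by one of $\tau_1,\tau_2$, which gives four cases.

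Now compute $D(\Pi)$ (Definition~\ref{dBasic}): its two columns are the row-sets of $\tau_1$ and of $\tau_2$. If $(1,a),(r,b)$ both lie in $\tau_1$ (respectively both in $\tau_2$), then $\Pi$ has type $\lambda'$ and $D(\Pi)=P^{(0)}$ (respectively $P^{(r-2)}$), so $D=P^{(0)}$ (respectively $P^{(r-2)}$) by Proposition~\ref{propOrdSpecial}(ii); by the $180^\circ$-rotation symmetry ($\dot\lambda=\lambda$, Remark~\ref{res:10a}) these two subcases are interchanged. If $(1,a)\in\tau_1$ and $(r,b)\in\tau_2$, then $D(\Pi)$ is the two-column diagram $Q$ with columns $\{(i,1)\colon 1\LEQ i\LEQ r-1\}$ and $\{(i,2)\colon 2\LEQ i\LEQ r\}$; this $Q$ is admissible, and since it has $2$ columns — the least possible in $\mathcal D^{(\lambda)}$ — it equals $D(w_Q,\lambda)$, so $D=Q$ by Proposition~\ref{propOrdSpecial}(i). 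But then $d=w_Q$, and a direct check (via Result~\ref{res:3a}: $t^{P^{(0)}}w_Q$ is a standard $P^{(0)}$-tableau) shows $w_Q$ is a prefix of $g_0=w_{P^{(0)}}$, with $w_Q\ne g_0$ because $Q\ne P^{(0)}$; this contradicts $d\in Y(\lambda)$, so this case cannot occur.

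The remaining case — $(1,a)\in\tau_2$ and $(r,b)\in\tau_1$ — is where I expect the main obstacle, since now $D(\Pi)$ is the two-column diagram $Q'$ with columns $\{(i,1)\colon 2\LEQ i\LEQ r\}$ and $\{(i,2)\colon 1\LEQ i\LEQ r-1\}$, which is \emph{not} admissible (it has no $1$-path of length $r$), so Proposition~\ref{propOrdSpecial} does not apply and $D$ need not equal $D(\Pi)$. Here I would instead use the admissibility of $D$ itself together with the no-empty-column and minimal-column properties of $D(d,\lambda)$: writing $(i,p_i),(i,q_i)$ ($p_i<q_i$) for the two nodes of middle row $i$, so that $p_2\LEQ\cdots\LEQ p_{r-1}\LEQ b$ and $a\LEQ q_2\LEQ\cdots\LEQ q_{r-1}$ along $\tau_1,\tau_2$, the $1$-path of length $r$ must pass from right nodes to left nodes at some middle row $i_0$ with $q_{i_0}\LEQ p_{i_0+1}$ (the cases $a\LEQ p_2$ and $q_{r-1}\LEQ b$ having already been disposed of, as they give $D=P^{(0)}$ or $P^{(r-2)}$); the requirement that $D$ have no empty column forces $p_{i_0+1}\in\{q_{i_0},q_{i_0}+1\}$, and a short further analysis forces $D$ to be exactly $P^{(v)}$ for the value $v$ read off from $i_0$. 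Concretely I would build the candidate $P^{(v)}$, verify that $t^{P^{(v)}}w_D$ is a standard $P^{(v)}$-tableau, conclude from Result~\ref{res:3a} that $d$ is a prefix of $g_v=w_{P^{(v)}}\in Z(\lambda)$, and then use the maximality of $d$ in $Z(\lambda)$ to get $d=g_v$, hence $D=D(g_v,\lambda)=P^{(v)}$ (using $P^{(v)}=D(g_v,\lambda)$ from Example~\ref{ex:3.14a}). The delicate point is ruling out ``wide'' admissible diagrams in this configuration and pinning $D$ down to a single $P^{(v)}$.

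Once the inclusion is in hand, equality follows: by Example~\ref{ex:3.14a} each $P^{(v)}$ is admissible with $P^{(v)}=D(g_v,\lambda)$, so $g_v\in Z(\lambda)$; if some $g_v\notin Y(\lambda)$ then $g_v$ is a prefix of some $y\in Y(\lambda)$ with $y\ne g_v$, and by the inclusion $y=g_{v'}$ for some $v'$, which is impossible since the $g_v$ are pairwise non-prefixes of one another (Example~\ref{ex:3.14a}) and the prefix relation is a partial order. Hence $g_v\in Y(\lambda)$ for all $v$, $\mathcal E^{(\lambda)}=\{P^{(v)}\colon 0\LEQ v\LEQ r-2\}$, and $|\mathcal E^{(\lambda)}|=r-1$. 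Finally, $P^{(0)}$ and $P^{(r-2)}$ have two columns and satisfy $\lambda_D''=\mu_D'=(2^{r-2},1^2)$, so they are special by Result~\ref{res:2a}, whereas for $1\LEQ v\LEQ r-3$ the diagram $P^{(v)}$ has three columns, so the largest part of $\mu_{P^{(v)}}'$ is $3$ while that of $\lambda_{P^{(v)}}''$ is $2$, and $P^{(v)}$ is not special. Thus $\mathcal E_s^{(\lambda)}=\{P^{(0)},P^{(r-2)}\}$ and $|\mathcal E_s^{(\lambda)}|=2$.
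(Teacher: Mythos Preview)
Your proposal is correct and follows the same overall strategy as the paper: cover $D$ by an ordered $2$-path, do a case analysis on the positions of $(1,a)$ and $(r,b)$, and in the hard case use a crossing $q_{i_0}\LEQ p_{i_0+1}$ to pin $D$ down to some $P^{(v)}$. The differences are organisational. First, the paper splits cases by the intrinsic conditions $j_1\LEQ j_2$ and $j_{r-1}'\LEQ j_r'$ rather than by where $(1,a),(r,b)$ land in one fixed $2$-path; this collapses your four cases to three and eliminates your case ``$(1,a)\in\tau_1,(r,b)\in\tau_2$'' and its contradiction through $Q$ (which is correct but unnecessary). Second, and more to the point, in the hard case the paper does not invoke ``no empty column'' or verify $t^{P^{(v)}}w_D$ is standard by hand: it simply notes that $\rho_1=\{(i,p_i):2\LEQ i\LEQ i_0\}$, $\rho_3=\{(i,q_i):i_0+1\LEQ i\LEQ r-1\}$, and $\rho_2=D\setminus(s(\rho_1)\cup s(\rho_3))$ form an ordered $3$-path $\tilde\Pi$ with $D(\tilde\Pi)=P^{(i_0-1)}$, and applies Proposition~\ref{propOrdSpecial}(i) directly. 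The standardness you propose to check by hand is precisely Lemma~\ref{lBasic} applied to this $3$-path, so the two arguments are equivalent, but packaging it as an ordered $3$-path makes the verification automatic and shows why the ``no empty column'' considerations are not needed.
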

\begin{proof}
Let $y\in Y(\lambda)$ and let $D=D(y,\lambda)$ so that $y=w_D$.
Then $D=\{(1,j_{1})\}\cup
\{(i,j)\colon 2\LEQ i\LEQ r-1,\ j=j_i,j_i'\}\cup\{(r,j_r')\}$,
where we write $j_i<j_i'$ if $2\LEQ i\LEQ r-1$.
Since $\lambda'=(r,r-2)$ and $D$ is admissible, it has a path $\pi$ of
length $r$ and a 2-path $\Pi=(\pi_1,\pi_2)$ containing all its nodes.
By Theorem~\ref{thm:2.14a}, we may assume that this 2-path is
ordered.
Then $(i,j_i)\in\pi_1$ and $(i,j_i')\in\pi_2$ for $2\LEQ i\LEQ r-1$.
So $j_{2}\LEQ\cdots\LEQ j_{r-1}$ and $j_{2}'\LEQ\cdots\LEQ j_{r-1}'$.
Since $\pi$ has a node on each row, $(1,j_{1}),(r,j_r')\in\pi$,
$j_{1}\LEQ j_r'$, $j_{1}\LEQ j_{2}'$, and $j_{r-1}\LEQ j_r'$.
\par
If $j_{1}\LEQ j_{2}$, we may assume that
$(1,j_{1}),(r,j_r')$ belong to $\pi_1$.
Then $\Pi=(\pi_1,\pi_2)$ is ordered and has type $\lambda'=(r,r-2)$.
Since $D=s(\Pi)$ and $D(\Pi)=P^{(0)}$, invoking Proposition~\ref{propOrdSpecial}(ii) we get $D=P^{(0)}$.

\par
If $j_{r-1}'\LEQ j_r'$, we may assume that
$(1,j_{1}),(r,j_r')$ belong to $\pi_2$.
Then $\Pi=(\pi_1,\pi_2)$ is ordered and has type $\lambda'$.
Since $D=s(\Pi)$ and $D(\Pi)=P^{(r-2)}$, Proposition~\ref{propOrdSpecial}(ii) now ensures that $D=P^{(r-2)}$.

Now suppose that $j_{2}<j_{1}$ and $j_r'<j_{r-1}'$.
This cannot occur for $r=3$ since in this case the node $(2,j_{2})$
would be to the left of $(1,j_{1})$ and the node $(2,j_{2}')$ would
be to the right of $(3,j_{3}')$ contradicting the fact that $D$ has a path of length 3.
Hence, $r\GEQ4$ and $(2,j_{2}'),(r-1,j_{r-1})$ belong to $\pi$.
So, for some $v$, with $2\LEQ v\LEQ r-2$, we have $j_v'\LEQ j_{v+1}$.

Let $\rho_1$, $\rho_2$, $\rho_3$ be the paths in $D$ with
$s(\rho_1)=\{(2,j_2),\, (3,j_3),\,\ldots, (v,j_v)\}$,
$s(\rho_3)=\{(v+1,j_{v+1}'),\, (v+2,j_{v+2}'),\ldots, (r-1,j_{r-1}')\}$ and
$s(\rho_2)=D-(s(\rho_1)\cup s(\rho_3))$.
Note that the condition $j_v'\LEQ j_{v+1}$ ensures that $\rho_2$ is indeed a path.
It follows that $D=s(\tilde\Pi)$, where $\tilde\Pi=(\rho_1,\rho_2,\rho_3)$ is an ordered 3-path in $D$ with $D(\Pi)=P^{(v-1)}$.
By Proposition~\ref{propOrdSpecial}(i), $D=P^{(v-1)}$ in view of the fact that $P^{(v-1)}$ is admissible and $P^{(v-1)}=D(w_{P^{(v-1)}},\lambda)$.
(In particular, this shows that $j_v'\LEQ j_{v+1}$ for precisely one $v$ with $0\LEQ v\LEQ r-2$.)
\par
This establishes that
$\mathcal E^{(\lambda)}\subseteq\{P^{(v)}\colon 0\LEQ v\LEQ r-2\}$.
However, since each $g_v\in Z(\lambda)$ and no $g_v$ is a prefix
of any other one, each $g_v\in Y(\lambda)$.
So, $\mathcal E^{(\lambda)}=\{P^{(v)}\colon 0\LEQ v\LEQ r-2\}$.
\end{proof}
\begin{corollary}
\label{cor:3.16a}
Let $r\GEQ 3$, $s\GEQ1$, $t\GEQ1$, $n=2r+s+t-4$ and let $\lambda$ be the
composition $(1^s,2^{r-2},1^t)$ of $n$.
Then $|\mathcal E_s^{(\lambda)}|=2$ and $|\mathcal E^{(\lambda)}|=r-1$.
\end{corollary}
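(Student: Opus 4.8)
The plan is to reduce Corollary~\ref{cor:3.16a} to Theorem~\ref{prop:3.15a} by an iterated application of Proposition~\ref{prop:3.2a}, combined with the symmetry provided by Remark~\ref{res:10a}. The key point is that $\lambda=(1^s,2^{r-2},1^t)$ is obtained from $(1,2^{r-2},1)$ by repeatedly appending parts equal to $1$ at the end, and by using the $D\mapsto\dot D$ reversal we may also append $1$'s at the front.

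First I would handle the case $s=1$. Here $\lambda=(1,2^{r-2},1^t)$, and setting $\lambda^{[0]}=(1,2^{r-2},1)$, $\lambda^{[j]}=(1,2^{r-2},1^{j+1})$ for $1\LEQ j\LEQ t-1$, we have $\lambda^{[j]}=(\lambda^{[j-1]})_*$ in the notation preceding Lemma~\ref{lem:3.1a}, since $\lambda^{[j-1]}$ is a composition whose last part equals $1$. By Proposition~\ref{prop:3.2a}, the induced map $\theta_*\colon Y(\lambda^{[j-1]})\to Y(\lambda^{[j]})$ is a bijection, and correspondingly $\psi$ induces a bijection $\mathcal E^{(\lambda^{[j-1]})}\to\mathcal E^{(\lambda^{[j]})}$. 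Iterating from $j=1$ to $t-1$ gives a bijection $\mathcal E^{((1,2^{r-2},1))}\to\mathcal E^{((1,2^{r-2},1^t))}$, so by Theorem~\ref{prop:3.15a} we get $|\mathcal E^{(\lambda)}|=r-1$ in this case. For $\mathcal E_s^{(\lambda)}$, note that $\psi$ sends the special diagrams $P^{(0)}$ and $P^{(r-2)}$ to diagrams which are again special (appending a single node in the column of the unique node on the last row of a special diagram with two columns keeps the column-structure compatible with a rearranged Young diagram — this needs to be checked directly, but it is routine from the explicit form of $P^{(0)}$ and $P^{(r-2)}$), while it sends each of the non-special $P^{(v)}$, $1\LEQ v\LEQ r-3$, to a diagram with three columns that is again non-special. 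Hence $|\mathcal E_s^{(\lambda)}|=2$ in the case $s=1$.

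For general $s\GEQ 1$, I would use reversal. We have $\dot\lambda=(1^t,2^{r-2},1^s)$, and since the composition $\lambda$ appears symmetrically — that is, $(1^s,2^{r-2},1^t)$ with $s$ leading $1$'s is the reverse of $(1^t,2^{r-2},1^s)$ — Remark~\ref{res:10a} gives bijections $Y(\lambda)\to Y(\dot\lambda)$, $Y_s(\lambda)\to Y_s(\dot\lambda)$, and hence $\mathcal E^{(\lambda)}\to\mathcal E^{(\dot\lambda)}$ sending special diagrams to special diagrams. So, by the case $s=1$ applied to $\dot\lambda$ when $t=1$ — equivalently, to $\lambda$ when $s=1$ — and then appending $1$'s at the front of $\lambda$, which corresponds under reversal to appending $1$'s at the end of $\dot\lambda$ and is again handled by Proposition~\ref{prop:3.2a}: explicitly, build up $(1^s,2^{r-2},1^t)$ from $(1,2^{r-2},1^t)$ by passing to reverses, repeatedly applying $\theta_*$, and passing back. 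Each step is a bijection preserving both $|\mathcal E^{(\cdot)}|$ and $|\mathcal E_s^{(\cdot)}|$, so we conclude $|\mathcal E^{(\lambda)}|=r-1$ and $|\mathcal E_s^{(\lambda)}|=2$ for all $s,t\GEQ 1$.

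The main obstacle I anticipate is the claim that $\psi$ (and the reversal map) sends special diagrams to special diagrams and non-special to non-special — i.e. that $|\mathcal E_s^{(\cdot)}|$ is genuinely preserved, not just $|\mathcal E^{(\cdot)}|$. For reversal this is immediate from Remark~\ref{res:10a}. For $\psi$, one must verify using Result~\ref{res:2a}(ii) (the criterion $\lambda_D''=\mu_D'$) that appending the minimal-column admissible node to a special diagram $D\in\mathcal D^{(\lambda)}$ yields a special diagram; since for our specific family the diagrams $P^{(v)}$ are completely explicit, this reduces to inspecting how appending a bottom-row node in the column of the current bottom node affects the column-composition, and checking the partition-conjugacy condition case by case for $P^{(0)}$, $P^{(r-2)}$, and $P^{(v)}$ with $1\LEQ v\LEQ r-3$. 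This is the one place where a short explicit computation is unavoidable, but it is entirely mechanical given Example~\ref{ex:3.14a}.
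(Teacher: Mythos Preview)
Your proposal is correct and follows essentially the same route as the paper: start from Theorem~\ref{prop:3.15a}, apply Proposition~\ref{prop:3.2a} repeatedly to append trailing $1$'s, then use Remark~\ref{res:10a} to reverse and repeat the process to handle the leading $1$'s. The paper's proof is terser but identical in substance; it simply notes that the diagrams in $\mathcal E^{(\lambda)}$ are obtained from those in Example~\ref{ex:3.14a} by extending the long column upward by $s-1$ nodes and downward by $t-1$ nodes, leaving the specialness count implicit in that explicit description rather than framing it, as you do, as a preservation property of $\psi$ to be checked case by case.
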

\begin{proof}
Apply Proposition~\ref{prop:3.2a} $t-1$ times to the case of the composition
$(1,2^{r-2},1)$,
then apply Remark~\ref{res:10a},
then apply Proposition~\ref{prop:3.2a} $s-1$ times,
and finally apply Remark~\ref{res:10a}.
The diagrams in $\mathcal E^{(\lambda)}$ are obtained
from the diagrams in Example~\ref{ex:3.14a} by extending the long
column of nodes upward by $s-1$ nodes and downward by $t-1$ nodes.
\end{proof}
\end{section}
%

\providecommand{\bysame}{\leavevmode\hbox to3em{\hrulefill}\thinspace}
\providecommand{\MR}{\relax\ifhmode\unskip\space\fi MR }
\providecommand{\MRhref}[2]{%
  \href{http://www.ams.org/mathscinet-getitem?mr=#1}{#2}
}
\providecommand{\href}[2]{#2}

\end{document}